 \author{Catharina Stroppel}
 \address{Mathematisches Institut\\
          Beringstrasse 1\\
          Universit\"at Bonn\\
          53115 Bonn, Germany}
 \email{stroppel@math.uni-bonn.de}
 \urladdr{http://www.math.uni-bonn.de/people/stroppel/}
 \thanks{The first author is supported by the NSF and the Minerva Research Foundation grant DMS-0635607}
 \author{Ben Webster}
 \title[2-block Springer fibers]{2-block Springer fibers:\\ convolution algebras and coherent sheaves}
 \address{Department of Mathematics, Massachusetts Institute of Technology} % \\ 77 Massachusetts Avenue\\ Cambridge, MA 02139}
\email{bwebster@math.mit.edu}
\urladdr{http://math.mit.edu/~bwebster}
 \thanks{The second author is supported by a National Science Foundation Postdoctoral Research
   Fellowship.}
\subjclass[2000]{14F05,44A35,16G10,14F25,17B10,53D40,57M27}
\keywords{Springer fiber, convolution algebra, coherent sheaves, Khovanov homology, Fukaya category, category $\cO$, torus fixed points}
\numberwithin{equation}{section}
\newcommand{\up}{\wedge}
\newcommand{\down}{\vee}
\newcommand{\nc}{\newcommand}
\nc{\renc}{\renewcommand}
\nc{\Cob}{\mathfrak{C}}
\nc{\Coh}{\mathsf{Coh}}
\nc{\Vect}{\mathsf{Vect}}
\nc{\hd}{\Omega^{1/2}}
\nc{\canb}{\Omega}
\nc{\op}{\operatorname}
\nc{\dn}{R}
\nc{\Fl}{F}
\nc{\ST}{\mathcal{K}}
\nc{\mg}{\mathfrak{g}}
\nc{\mZ}{\mathbb{Z}}
\nc{\mE}{\mathcal{E}}
\nc{\mb}{\mathfrak{b}}
\nc{\p}{\mathfrak{p}}
\nc{\mm}{\mathfrak{m}}
\nc{\mn}{\mathfrak{n}}
\nc{\ml}{\mathfrak{l}}
\nc{\Ex}{\mathfrak{Ex}}
\nc{\cU}{\mathcal{U}}
\nc{\cC}{\mathcal{C}}
\nc{\cM}{\mathcal{S}}
\nc{\mh}{\mathfrak{h}}
\nc{\Sym}{\mathrm{Sym}}
\nc{\cO}{\mathcal{O}}
\nc{\cI}{\mathcal{I}}
\nc{\can}{\mathrm{can}}
\nc{\Z}{\mathfrak{Z}}
\nc{\onto}{twoheadrightarrow}
\nc{\cE}{\mathcal E_c}
\nc{\en}{N} \nc{\flag}{X}
\nc{\rk}{\mathrm{rk}}
\nc{\spf}{Y}
\nc{\CP}{\mathbb{P}}
\nc{\com}[1]{Y_{#1}}
\nc{\la}{\lambda}
\nc{\mC}{\mathbb{C}}
\nc{\Sm}[1]{{#1}_\vee}
\nc{\Sp}[1]{{#1}_\wedge}
\nc{\si}{\sigma}
\nc{\de}{\delta}
\nc{\bigwedgie}[1]{\sideset{}{^{#1}}\bigwedge}
\nc{\im}{\mathrm{im}}
\nc{\C}{\mathbb{C}}
\nc{\Bx}{\mathbf{x}}
\nc{\m}{\mathbf{m}}
\nc{\too}{\longrightarrow}
\nc{\ep}{\epsilon}
\nc{\tY}{\tilde Y}
\nc{\taY}{\tilde{\EuScript{Y}}}
\nc{\taYt}{\taY^{(3)}}
\nc{\taYf}{\taY^{(4)}}
\nc{\al}{\alpha}
\nc{\be}{\beta} \nc{\KH}{\mathcal{H}} \nc{\vp}{\varphi} \nc{\LB}{V}
\nc{\End}{\operatorname{End}}
\nc{\Ext}{\operatorname{Ext}}
\nc{\sExt}{\mathcal{E}xt}
\nc{\Hom}{\operatorname{Hom}}
\nc{\fx}[1]{\mathcal{F}_\bullet({#1})}
\nc{\fxi}[2]{\mathcal{F}_{#1}(#2)} \nc{\st}[1]{\EuScript{Y}_{#1}}
\nc{\scr}{\mathcal} \nc{\re}{\sim} \nc{\sip}{\epsilon} \nc{\St}[1]{S(#1)}
\nc{\nxt}{\cup\cup}
\nc{\nested}{\Cup}
\nc{\tP}{\EuScript{P}}
\nc{\tQ}{\EuScript{Q}}
\nc{\Snk}{\EuScript{S}_{n-k,k}}
\theoremstyle{margin}
  \newtheorem{theorem}{Theorem}
 \newtheorem*{theorem*}{Theorem}
  \newtheorem{definition}[theorem]{Definition}
  \newtheorem{lemma}[theorem]{Lemma}
  \newtheorem{proposition}[theorem]{Proposition}
 \newtheorem*{definition*}{Definition}
  \newtheorem{corollary}[theorem]{Corollary}
  \newtheorem{remark}[theorem]{Remark}
  \newtheorem{question}[theorem]{Question}
  \newtheorem{ex}[theorem]{Example}
  \newtheorem{conjecture}[theorem]{Conjecture}
 \newtheorem*{conjecture*}{Conjecture}
\begin{document}
\begin{abstract}
For a fixed 2-block Springer fiber, we describe the
  structure of its irreducible components and their relation to the
  Bia\l ynicki-Birula paving, following work of Fung.
That is, we consider the space of complete flags in $\C^n$ preserved by a fixed nilpotent matrix with 2 Jordan blocks, and study the action of diagonal
matrices commuting with our fixed nilpotent. In particular, we describe the structure of each component, its set of torus fixed points, and prove a conjecture of Fung describing the intersection of any pair.

Then we define a convolution algebra structure on the direct sum of the
  cohomologies of pairwise intersections of irreducible components and
  closures of $\mC^*$-attracting sets (that is Bia\l ynicki-Birula cells), and show this is isomorphic to
 a generalization of the arc algebra of Khovanov defined by the first author.
 We investigate the connection
  of this algebra to Cautis \& Kamnitzer's recent work on link
  homology via coherent sheaves and suggest directions for future
  research.
\end{abstract}
\maketitle
\setcounter{tocdepth}{1}
\tableofcontents

\section*{Introduction}

Many important algebras arising in representation theory (Hecke
algebras, universal enveloping algebras, etc.) have a geometric
description based on convolution products.

Besides their intrinsic interest, realizing an algebra in terms of
convolution allows for a geometric understanding of the representation
theory of that algebra, in particular, the construction of collections
of standard and costandard modules, indicating the existence of an
interesting representation theory along the lines of highest weight
categories or quasi-hered\-itary algebras.  This approach has been
applied with great success to the representation theory of Weyl
groups, Hecke algebras of various flavors and universal enveloping
algebras, as is ably documented in the book of Chriss and Ginzburg
\cite{CG97}.

Using 2-block Springer fibers we present a construction of a family of
convolution algebras with a somewhat different nature than the above
examples (see Section~\ref{sec:convolution-algebras} for a precise
description).  With a certain specific choice of parameters and the
two Jordan blocks of the same size, the algebra is related to the
Ext-algebra of certain {\it coherent sheaves} on a resolution of the
corresponding Slo\-do\-wy slice and to a graphically defined algebra,
called the {\it arc algebra} $\KH^\bullet$, introduced by Khovanov
\cite{KhoJones}. For the general 2-Jordan-block case (not necessarily
equally sized), we establish an
isomorphism to the more general version of the arc algebra as
introduced in \cite{StrSpringer} and \cite{ChK}.

Our construction is built on a careful explicit geometric and
combinatorial analysis of the geometry of the Springer fiber and its
components. Apart from the 2-Jordan-block case, the structure of
irreducible components of Springer fibers is not sufficiently well
understood to generalize this construction, though significant
progress on the structure of components and their intersections has
been achieved in the square-zero (i.e. two column) case studied in
\cite{Melnikov}, in addition to the 2-Jordan-block case studied here.\\

Khovanov used his arc algebra to define a categorification of the
Jones polynomial (\cite{KhoJones}), followed by a representation theoretic
categorification of the Jones polynomial and the Reshetikhin-Turaev
$U(\mathfrak{sl}_2)$-tangle invariant obtained by the first author in
\cite{StDuke}. The choice of Jordan block sizes
corresponds there to a choice of a specific weight space in a tensor
product of many copies of the natural $U(\mathfrak{sl}_2)$-module,
hence naturally extends the case of two blocks of the same size. It is
known that after restriction to a suitable subcategory, this
categorification of the Jones polynomial agrees with Khovanov's
(\cite{StrSpringer}, \cite{BS1}, \cite{BS3}).

On the other hand,
Cautis and Kamnitzer (\cite{CK}) used the geometry of spaces connected
with two-row Springer fibers to define a related knot homology theory
using certain categories of coherent sheaves, whereas Seidel, Smith
\cite{SS} and Manolescu \cite{Manolescu} constructed a symplectic
version of Khovanov homology using a certain Fukaya category connected
to the Springer fibres of interest to us. Our convolution algebra
construction is motivated by both of these constructions, though
perhaps more strongly the latter. More precisely,
the cohomology of the intersection of two components in our picture
should be seen in analogy to the morphism space between two (compact)
Lagrangian submanifolds in \cite{SS} and one of our main results will
be the definition of a convolution product structure on the direct sum
of all these morphism spaces mimicking the composition of morphisms in
the Fukaya category. \\

We hope that our description of the
convolution algebra will ultimately shed some light on the connection
between the algebraic-representation theoretic categorification and
the geometric ones. In particular, we expect that, with the correct
identifications, the algebras appearing in all three contexts are
isomorphic, establishing some rather surprising equivalences of
categories (see Section \ref{sec:coher-sheav-comp} for results in this direction).\\

An analogous construction associating an algebra to a hypertoric
variety has been developed by the second author with Braden, Licata
and Proudfoot (\cite{BLPW}).  Like the algebra we define, this
hypertoric algebra is quasi-hereditary and moreover Koszul (which is
known to be true for our algebra as well, \cite{BS2}). The Koszul dual
of this hypertoric algebra is the algebra associated via this
convolution construction to the Gale dual hypertoric variety.\\

Let us outline the content of the paper in more detail. For any nilpotent endomorphism $N$ of $\mC^n$, we have the following (in general,
not smooth) subvariety of the full flag variety, which only depends
(up to isomorphism) on the conjugacy class of $N$:
\begin{definition*}
The {\bf Springer fiber} of a nilpotent map $N:\mC^n\to\mC^n$ is the
  variety of all complete flags $\mathcal{F}$ in $\mC^n$ fixed under $N$
  (i.e. for any space $F_i$ of the flag $\mathcal{F}$, we have that
  $NF_i\subset F_{i-1}$).
\end{definition*}
We can always naturally associate a Springer fiber with any parabolic
subalgebra $\p$ of $\mathfrak{sl}_n$ containing the standard Borel of
all upper triangular matrices: given $\p$, we have a composition of
$n$ which, in turn, determines a Jordan type, hence a nilpotent
conjugacy class in $M(n\times n,\mC)$.  More canonically, this is a
regular nilpotent in the Levi of $\p$.  In the present paper, we
restrict to the case where $N$ is nilpotent with two Jordan blocks
(i.e.~where $\p$ is maximal or, equivalently, $\dim\ker N=2$).
\\

In Sections \ref{sec:irred-comp-their}--\ref{sec:pairw-inters-stable},
we will concentrate on combinatorial and geometric preliminaries.  We
first recall the description of irreducible components of these
Springer fibers (following \cite{Fung}), and more generally consider
the closure of cells in the Bia\l ynicki-Birula paving of the Springer
fiber. For all such closures, we verify Fung's conjecture that
pairwise intersections of such are smooth, iterated $\CP^1$-bundles
and explicitly determine their cohomology rings as quotients of the
cohomology ring of the full flag variety.\\

Then, in Section~\ref{sec:convolution-algebras}, we equip the direct
sum of all these cohomologies (with appropriate grading shifts) with a
non-commutative convolution product which turns it into a
finite dimensional graded algebra $H^\bullet$. In the case where the
two Jordan blocks have the same size, the underlying vector space is
isomorphic to the one underlying Khovanov's arc algebra. For all block
lengths, we
obtain the vector spaces underlying the generalized versions of
Khovanov algebras.

The generalized versions of Khovanov's algebra have a
quasi-her\-ed\-itary cover $\tilde{\KH}^\bullet$ described by the
first author in \cite{StrSpringer} and now sometimes called
KS-algebras. For a detailed description and further properties of
these generalized Khovanov algebras and their representation theory,
we refer to \cite{BS1} and \cite{BS2}.  We construct a
quasi-hereditary cover $\tilde{H}^\bullet$ of our first convolution
algebra using a Bia\l ynicki-Birula paving of the Springer fiber, with
respect to a generic cocharacter of the maximal torus commuting with
$N$. The set of fixed points for this torus action are in natural
bijection with the idempotents in the algebra $\KH^\bullet$ (and hence
with indecomposable projective modules in the parabolic category
$\cO_0^\p$ or in the quasi-hereditary cover of the generalized arc
algebra).  We denote by $\st w$ the closures of Bia\l ynicki-Birula
cells. (In our special case these can also be viewed as the stable
manifolds under the Morse flow of the moment map of this cocharacter
or as the closure of a fixed point attracting set).  Taking cohomology
over $\mC$, we show that
\begin{equation*}
 \tilde{H}^\bullet:=\bigoplus_{w,w'}H^\bullet(\st w\cap \st {w'})\langle d(w,w')\rangle
\end{equation*}
can be equipped with a convolution algebra
structure which is a (non-negatively) $\mZ$-graded algebra after appropriate grading shifts $\langle d(w,w')\rangle$ indicated on the
right hand side. \\

We then show the main result of our paper.
\begin{theorem*}
  The algebra $H^\bullet$ (resp. the extended version
  $\tilde{H}^\bullet$) and the generalized arc algebra $\KH^\bullet$
  (resp. its quasi-hereditary cover $\tilde \KH^\bullet$) are
  isomorphic as graded algebras.
\end{theorem*}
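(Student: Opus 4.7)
The plan is to construct a graded algebra isomorphism explicitly, splitting the task into two stages: (i) a graded vector space identification, and (ii) a multiplicative compatibility check that reduces to a local comparison between pullback-pushforward along triple intersections and the Frobenius multiplication/comultiplication on $H^\bullet(\CP^1)$ governing Khovanov's surgery product.

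For the vector space identification, I would use the results from the preceding sections describing every pairwise intersection $\st{a}\cap\st{b}$ as a smooth iterated $\CP^1$-bundle and its cohomology as an explicit quotient of $H^\bullet(\flag)$. The total dimension is $2^{k(a,b)}$, where $k(a,b)$ is the number of circles in the diagram $ab^*$ obtained by composing the cup diagrams indexing the two components; this matches the number of basis elements of the $(a,b)$-component of $\KH^\bullet$ by definition of the arc algebra. The explicit quotient description identifies the internal grading, and the shift $\langle d(a,b)\rangle$ built into $\tilde H^\bullet$ is precisely what is needed to align with the grading on $\KH^\bullet$. I would then fix a graded linear isomorphism $\Phi$ by matching the circle-indexed generators on both sides.

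For the multiplicative statement, I would unpack the convolution product $H^\bullet(\st{a}\cap\st{b}) \otimes H^\bullet(\st{b}\cap\st{c}) \to H^\bullet(\st{a}\cap\st{c})$ as pullback along the inclusions of the triple intersection $\st{a}\cap\st{b}\cap\st{c}$ into the two pairwise intersections, followed by pushforward to the third. The argument would proceed by showing that the triple intersection is also an iterated $\CP^1$-bundle in a way compatible with the combinatorial picture of $abc^*$: circles that survive intact from $ab^*$ and $bc^*$ to $ac^*$ correspond to trivial factors on which the maps act as the identity, while circles that merge or split correspond to local blow-down or blow-up situations where pullback-pushforward produces exactly the Frobenius product or coproduct on $\mC[x]/(x^2)\cong H^\bullet(\CP^1)$. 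The global multiplication then factors as a tensor product of these local pieces, matching Khovanov's surgery rule.

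The main obstacle is the local comparison lemma together with the bookkeeping needed to organize the triple intersection combinatorics in a form compatible with the surgery pattern. Although each pairwise intersection is well-understood from the earlier sections, one must identify the triple intersection as an iterated $\CP^1$-bundle \emph{and} show that its projections to the pairwise intersections realize precisely the merge/split combinatorics of the TQFT. The torus fixed-point data, which in our setting indexes BB cells and labels idempotents on both sides, provides a rigid framework for checking this compatibility, and should let one reduce the computation to the model case of a single circle being split or merged, where everything can be verified by direct calculation. The extension to the quasi-hereditary covers $\tilde H^\bullet$ and $\tilde \KH^\bullet$ follows by the same argument applied to the larger index set of BB cells, since the cover structure on either side is uniquely determined by the pairwise intersection data and the fixed-point combinatorics once the core identification is in place.
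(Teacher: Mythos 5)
Your graded vector space identification is fine and is exactly what Theorem~\ref{bimodules} provides. The multiplicative step, however, has a genuine gap: you describe the product as plain pullback to the triple intersection $\st{a}\cap\st{b}\cap\st{c}$ followed by pushforward, i.e.\ convolution with $f=1$. That is not the product the theorem is about, and it does not reproduce Khovanov's multiplication in general. The relevant intersections inside $\st{a}\times\st{c}$ are clean but far from transverse, so the naive pull--push misses an excess intersection contribution. Concretely, the theorem asserts the existence of a nontrivial class $f$ (the Euler class of the excess bundle $i_A^*j_A^*T_X/(j_A^*T_A+j_B^*T_B)$ on the triple intersection), and the degree of $f$ equals twice the genus of the cobordism from $\overline{C'}C\sqcup\overline{C}C''$ to $\overline{C'}C''$. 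Your ``merge/split on a single circle'' local model only covers the genus-zero situation; whenever the composite cobordism has handles, the naive convolution computes the wrong answer, and no reorganization of the triple-intersection combinatorics alone will fix this.

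The paper's proof avoids this by not going to the triple intersection directly. It factors the multiplication through a chain of smooth subvarieties $Z_0, Z_1,\dots,Z_{n-k_w}$ of $\st{a}\times\st{c}$, one for each elementary saddle move of the cobordism, with $Z_i\subseteq Z_{i+1}$ or $Z_i\supseteq Z_{i+1}$ at each step; it then checks case by case (merge, split, circle--segment interactions) that the corresponding pullback or Gysin pushforward agrees with the Khovanov--Stroppel TQFT map, up to signs that are corrected by a careful choice of orientations (the classes $z_i=(-1)^ix_i$ and the orientation twist $(-1)^{\sum j_\ell}$). Only afterwards is the iterated push--pull collapsed, via base change for clean intersections, into a single pull--push through $\st{a}\cap\st{b}\cap\st{c}$ capped with $f$. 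If you want to salvage your approach, you must either (i) adopt this step-by-step factorization, or (ii) explicitly compute the excess bundle and show its Euler class supplies the missing ``handle'' contributions; and in either case you must address the sign discrepancies, since with the unadjusted complex orientations the formulas come out off by signs that are not an overall sign. The extension to $\tilde H^\bullet$ versus $H^\bullet$ is indeed routine once this is done, as you say.
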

Note that by \cite[Theorem 3]{StrSpringer} the category of
$\KH^\bullet$-modules is equivalent to the category of {\it perverse
  sheaves} on a Grassmannian (constructible with respect to the
Schubert stratification). Hence this algebra actually has two
geometric realizations, one arising from constructable sheaves, and
one which seems to be related to the Fukaya category and coherent sheaves.

Since the KS-algebras are the endomorphism rings of certain
projectives in parabolic category $\cO_0^\p$, by Koszul duality
\cite[Theorem 1.1.3]{BGS}, this is isomorphic to an $\Ext$-algebra of
simple modules in a singular block of category $\cO$ corresponding to
a weight precisely fixed by $W_\p$ (in the so-called dot-action). This
theorem then suggests that we have an embedding of this singular
category $\cO$ into the Fukaya category of the Slodowy slice
$\Snk$. In this way one might hope for a direct connection with the
construction in \cite{SS}.
\\

Finally in Section~\ref{sec:coher-sheav-comp}, we consider how our
model (and thus, indirectly, the KS-algebra and category
$\cO^{\mathfrak{p}}_0$) is related to the sheaf-theoretic model of
Khovanov homology given by Cautis and Kamnitzer \cite{CK}. To each
crossing\-less matching $a\in\rm{Cup}(n)$ (that means to each
primitive idempotent in Khovanov's arc algebra) their model associates
a certain coherent sheaf $i_*\Omega(a)^{1/2}$ on a certain compact
smooth variety related with Slo\-do\-wy slices. The variety naturally
contains the Springer fiber we had considered previously, and the
sheaves in question are supported on the component we associated with
$a$. As our notation suggest, these sheaves arise from square roots of
canonical bundles (Theorem~\ref{funcCK}).

We show that, as a vector space, the Ext-algebra of these sheaves can
be identified with our algebra $H^\bullet$ (and thus also with
Khovanov's algebra):

\begin{theorem*}
  With the notation in Section~\ref{sec:coher-sheav-comp} there is an
  isomorphism of graded vector spaces
\begin{equation*}
\Ext^\bullet_{\Coh(\Snk)}(i_*\Omega(a)^{1/2},j_*\Omega(b)^{1/2})\cong  H^\bullet(a\cap b)\langle d(a,b)\rangle,
\end{equation*}
\end{theorem*}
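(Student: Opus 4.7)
The plan is to use a local-to-global Ext spectral sequence on $\Snk$, combined with the clean-intersection description of $V := a\cap b$ from the previous sections and the fact that $V$ is a smooth iterated $\CP^1$-bundle.

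First, I would apply
$$E_2^{p,q} = H^p\bigl(\Snk,\sExt^q_{\Snk}(i_*\Omega(a)^{1/2}, j_*\Omega(b)^{1/2})\bigr) \Rightarrow \Ext^{p+q}_{\Coh(\Snk)}(i_*\Omega(a)^{1/2}, j_*\Omega(b)^{1/2}).$$
Since both sheaves are pushforwards from the smooth subvarieties corresponding to $a$ and $b$ inside $\Snk$, the local Ext sheaves are supported on $V$, reducing the question to a cohomology computation on $V$.

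Second, I would resolve $i_*\Omega(a)^{1/2}$ by a Koszul complex built from the conormal bundle of $a$ in $\Snk$, and compute each local $\sExt^q$ via the clean-intersection excess formula. If $\mE$ denotes the excess normal bundle of $V$ in $\Snk$, then
$$\sExt^q_{\Snk}(i_*\cO_{a}, j_*\cO_{b})|_V \;\cong\; \Lambda^q \mE^*,$$
and twisting by $\Omega(a)^{1/2}\otimes\Omega(b)^{1/2}$ gives the corresponding twist of this exterior algebra on $V$. The point of taking square roots is exactly that
$$\Omega(a)^{1/2}|_V \otimes \Omega(b)^{1/2}|_V \otimes \det(\mE)^{1/2} \cong \cO_V$$
(up to the combinatorial shift encoded in $d(a,b)$); this symmetry between the $a$- and $b$-side is the reason Cautis--Kamnitzer introduced square roots in the first place.

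Third, with this identification in hand, the $E_2$ page collapses to $\bigoplus_q H^{p}(V,\Lambda^q \mE^*)$, and because $V$ is an iterated $\CP^1$-bundle whose cohomology ring was described explicitly in Sections \ref{sec:irred-comp-their}--\ref{sec:pairw-inters-stable}, one inductively computes this sum on the tower of $\CP^1$-fibrations by a standard Bott--Borel--Weil calculation and matches it with $H^\bullet(V, \mC)\langle d(a,b)\rangle$. The main obstacle is the square-root identity above: square roots of line bundles are only determined modulo $2$-torsion in the Picard group, so one really must track the Cautis--Kamnitzer definition of $\Omega(a)^{1/2}$ line-bundle-by-line-bundle and verify its compatibility with the intersection geometry; everything else is formal bookkeeping with Koszul complexes, but pinning down the square roots demands a careful comparison between the construction of \cite{CK} and the explicit component structure developed earlier in the paper.
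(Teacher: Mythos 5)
Your overall strategy --- local-to-global Ext spectral sequence, a Koszul/excess-bundle computation of the local Ext sheaves on the clean intersection $V=a\cap b$, and a cancellation of determinant twists coming from the half-densities --- is the same as the paper's, which packages the first two steps into a citation of C{\u{a}}ld{\u{a}}raru--Katz--Sharpe \cite{CKS03}. But your outline has a hole at exactly the point where the real geometric content enters. You leave the excess bundle $\mathcal{E}=T_{\Snk}|_V/(T_a|_V+T_b|_V)$ unidentified and propose to compute $\bigoplus_q H^p(V,\Lambda^q\mathcal{E}^*)$ by Bott--Borel--Weil on the $\CP^1$-tower; without knowing what $\mathcal{E}$ is, that computation cannot be carried out, and there is no a priori reason its answer should be $H^\bullet(V;\mC)$. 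The missing input is that $\Snk$ is holomorphic symplectic and the components are Lagrangian, so the symplectic form identifies $\mathcal{E}$ with the cotangent bundle $T^*_V$. With that identification the $E_2$ page becomes the Dolbeault cohomology $H^{p,q}(V)$ (shifted by $d(a,b)$), and the comparison with $H^\bullet(V;\mC)$ is Hodge theory rather than a bundle-by-bundle calculation.

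Second, you assert that ``the $E_2$ page collapses'' but give no argument that the higher differentials $d_r$, $r\ge 2$, vanish; that is the remaining nontrivial step. The paper's argument is that $H^{p,q}(V)$ is generated by first Chern classes of line bundles, hence concentrated on the diagonal $p=q$, where no $d_r$ with $r\ge 2$ can be nonzero. Finally, a small but telling sign of trouble in your square-root identity: in $\sExt^\bullet(i_*\Omega(a)^{1/2},j_*\Omega(b)^{1/2})$ the first argument appears dualized, so the twist to be trivialized has the form $\Omega(a)^{-1/2}|_V\otimes\Omega(b)^{1/2}|_V\otimes(\cdots)$, not the symmetric product you wrote. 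You rightly flag that pinning down the square roots needs care, but the Lagrangian condition (via adjunction, $N_{a/\Snk}\cong T^*_a$) is again what makes this bookkeeping come out, and it is absent from your argument.
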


We have not been able to determine whether the Yoneda product on this
space is isomorphic to the arc algebra $\KH^\bullet$.  Obviously, this
would be a very interesting question to resolve. It might be a first
step to solve the question of whether the functorial tangle invariants
of Cautis and Kamnitzer (\cite{CK}) can be identified with the
functorial tangle invariants of Khovanov (\cite{Khotangles}) and
(equivalently) of the second author (\cite{StDuke}).

The half-densities $\Omega(a)^{1/2}$ are simple objects in the heart
$\mathcal{C}$ of a certain $t$-structure on the category of coherent
sheaves on a smooth compact space $Z_n$. This space is a certain compactification of the
pre-image under the Springer resolution of a normal slice to the
nilpotent orbit through $N$ at $N$.

We describe the other simple
objects in this heart, and show that it carries a highest-weight
structure with the same Kazhdan-Lusztig polynomials as the corresponding highest weight KS-algebra, say $A$.

\begin{conjecture*}
  There is an isomorphism between $\mathcal{C}$ and the category of finite
  dimensional modules over the algebra $A$.
\end{conjecture*}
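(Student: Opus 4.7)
The plan is to realize $\mathcal{C}$ as modules over an endomorphism algebra, and then identify that algebra with $A$ by exploiting Koszul duality. First I would use the highest-weight structure on $\mathcal{C}$ asserted in the preceding paragraph: the simples $\Omega(a)^{1/2}$ are indexed by crossingless matchings, and the Kazhdan-Lusztig polynomials coincide with those of $A$. This yields standard objects $\Delta(a)$ and costandard objects $\nabla(a)$ with the expected multiplicities, and the usual $\Ext$-vanishing between standards and costandards produces projective covers $P(a)$ of the simples as iterated extensions of standards. Setting $P=\bigoplus_a P(a)$ and $A':=\End_\mathcal{C}(P)^{\mathrm{op}}$, the functor $\Hom_\mathcal{C}(P,-)$ gives an equivalence $\mathcal{C}\xrightarrow{\sim} A'\text{-mod}$, so the task reduces to identifying $A'$ with $A$.

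Next I would invoke Koszul duality. The KS-algebra $A$ is Koszul by \cite{BS2}, hence determined by its quadratic dual $A^!\cong\Ext^\bullet_A(L,L)$, where $L$ is the sum of simple $A$-modules. The preceding theorem provides a graded vector-space isomorphism
\[ \Ext^\bullet_{\Coh(\Snk)}\Bigl(\bigoplus_a i_*\Omega(a)^{1/2},\ \bigoplus_b j_*\Omega(b)^{1/2}\Bigr) \;\cong\; \bigoplus_{a,b} H^\bullet(a\cap b)\langle d(a,b)\rangle, \]
whose right-hand side is identified with $\KH^\bullet$, and hence with $A^!$, via the main theorem of this paper and the Koszul self-duality picture for generalized Khovanov algebras. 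Thus if this isomorphism can be upgraded to one of graded algebras --- matching the Yoneda product on the left with the arc algebra multiplication on the right --- then $A'$ is Koszul with $(A')^!\cong A^!$, forcing $A'\cong A$.

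The main obstacle is precisely this multiplicativity statement, which the authors themselves flag as an open question immediately after their $\Ext$-theorem. Two natural routes suggest themselves: one could attempt to build explicit locally free resolutions of $i_*\Omega(a)^{1/2}$ in $\Coh(\Snk)$ adapted to the stratification of $Z_n$ by Springer components and then compute Yoneda composition combinatorially using the cup-cap pictures; alternatively, if $\mathcal{C}$ (or a suitable thick subcategory) can be realized inside a Fukaya category of the Slodowy slice along the lines hinted at in the introduction, the multiplicativity would follow from the comparison of Fukaya composition with the paper's convolution algebra structure. Either route appears to require substantial geometric input beyond the methods developed in this paper, which is presumably why the statement is phrased as a conjecture rather than a theorem.
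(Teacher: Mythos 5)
This statement is a conjecture that the paper deliberately leaves open: the authors prove a highest-weight structure on $\mathcal{C}=\Ex_n$ with standards $\LB_w$ (Theorem~\ref{highestweight}) and match decomposition numbers with the Kazhdan--Lusztig polynomials of the KS-algebra (Proposition~\ref{multiplicities} and Remark~\ref{KL}), but they explicitly do not establish the equivalence of categories; indeed they state (around Conjecture~\ref{conj}) that they cannot even verify that the Yoneda product agrees with the arc-algebra product. Your proposal does not close this gap either --- you correctly identify the multiplicativity of the $\Ext$ isomorphism as the central missing step and acknowledge it --- so what you have written is a reduction of one open problem to another, not a proof. That reduction is reasonable in outline, but it is worth being precise about where it is still incomplete beyond the step you flag.

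First, the simple objects of $\mathcal{C}$ are the $\cM_w$ for \emph{all} weight sequences $w$ (all $\binom{n}{k}$ of them), and only those with $r(w)=k$ are the half-densities $i_*\Omega(a)^{1/2}$ supported on components; correspondingly, $A$ is the quasi-hereditary cover $\tilde\KH^\bullet$, whose simples are also indexed by all weight sequences. So the Koszul-duality argument requires the full $\Ext$-algebra $\Ext^\bullet_{D^b(\Coh(Z_n))}(\bigoplus_w\cM_w,\bigoplus_w\cM_w)$, whereas Theorem~\ref{halfdensities} only computes $\Ext$ groups between half-densities, and moreover computes them in $\Coh(\Snk)$ rather than in $\Coh(Z_n)$; the paper establishes only the $\Hom$-level statement for general $\cM_w$ (Lemma~\ref{semisimple}), not the higher $\Ext$ groups needed for $A^!$. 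Second, even granting an algebra isomorphism $\Ext^\bullet_{\mathcal{C}}(L,L)\cong A^!$, the conclusion $A'\cong A$ via $(A')^!\cong A^!$ requires knowing that $A'=\End_{\mathcal{C}}(P)^{\mathrm{op}}$ is itself Koszul (double Yoneda duality fails for general finite-dimensional algebras), and $A'$ does not even come with a grading a priori; this is a second substantive obstruction, independent of multiplicativity. A route that avoids some of this would be to exploit the highest-weight structure directly: both $\mathcal{C}$ and $A\text{-mod}$ are highest-weight categories with the same poset and the same decomposition numbers, so one could try to match the standard objects $\LB_w$ with the standard $A$-modules and compute $\Ext^1$ and the relevant higher extensions between standards on the geometric side --- but that computation is again precisely the kind of input the paper does not supply.
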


\section*{Acknowledgments}

The authors would like to thank Richard Thomas, Joel Kamnitzer, Daniel
Huybrechts, Tom Braden, Roman Bezrukavnikov, and Clark Barwick for
their insight and suggestions. we are grateful to the referee for many
detailed comments and in particular to Mohammed Abouzaid and Ivan
Smith for pointing out a mistake in a previous version of the
paper. Both authors would like to thank the Institute of Advanced
Study where most of this research was carried out.

\section*{Preliminaries}
In the following, all vector spaces and cohomologies are defined over
$\mC$. We abbreviate $\otimes=\otimes_\mC$. An {\it algebra} will
always be a unitary associative $\mathbb{C}$-algebra. A {\it graded
  vector space} will always be $\mZ$-graded. For a graded vector space
$M$ and $i\in\mZ$ we denote by $M\langle i\rangle$ the graded vector
space with homogeneous components $(M\langle i\rangle)_j=M_{j-i}$.

Let $V$ be an $n$-dimensional complex vector space and $\en:V\to V$ be
a nilpotent endomorphism of Jordan type $(n-k,k)$. For ease, we assume
$2k<n$. Explicitly, we equip $V$ with an ordered basis
$\{p_1,\ldots,p_{n-k},q_1,\ldots, q_{k}\}$ with the action of $\en$
defined by
\begin{equation*} \en(p_i)=p_{i-1},\hspace{.5in}
  \en(q_i)=q_{i-1} \end{equation*}
  where, by convention,
$p_{0}=q_{0}=0$ and often write $\mC^n$ instead of $V$. We let $P=\langle p_1,\ldots,p_{n-k}\rangle$ and
$Q=\langle q_1,\ldots,q_{k}\rangle$.

Let $\flag$ be the variety of complete flags in $V$, and let $\spf$ be the
fixed points of $\exp(\en)$ acting on $\flag$. So, $Y$ consists of all complete flags
$\Fl_0\subset \Fl_1\subset \ldots\subset V$ such that $\en(\Fl_i)\subseteq \Fl_{i-1}$.

The ordering on the basis equips $V$ with a {\it standard flag} $$\{0\}\subset\langle
p_1\rangle\subset \langle p_1,p_2\rangle\subset\cdots \subset\langle p_1,\ldots
p_{n-k}, q_1, \ldots, q_{k-1}\rangle\subset V,$$ which is invariant under $\en.$

\section{Irreducible components and their cohomology}
\label{sec:irred-comp-their}

\subsection{Matchings and tableaux}
\label{sec:matchings-tableaux}

In order to describe the irreducible components of $\spf$, we will
first have to define some combinatorial machinery.  This section will
cover a number of results from the article of Fung \cite{Fung}, which
will be necessary for later.

\begin{definition}
  A {\bf standard tableau} is a filling of the Young diagram of a
  partition such that the rows and columns are strictly decreasing
  (read from the top left corner).
\end{definition}
\begin{definition}
  A {\bf crossingless matching} is a planar diagram consisting of $n$
  points, $k$ cups, and $n-2k$ rays pointing directly downward such
  that each point is attached to exactly one cup or ray, cups only
  pass below points, not above them, and no cup or ray crosses any
  other. We say that a point at the end of a cup is {\bf matched} and
  one at the end of a ray is {\bf orphaned}.
\end{definition}
Given any standard tableau $S$ of shape $(n-k,k)$, we can associate a
crossingless matching $\m(S)$ of $n$ points, numbered from left to the
right, such that the bottom row of the tableau contains all the
numbers which are at the left end of a cup, and the top row of the
diagram contains all the numbers which are at the right endpoint of a
cup, or are the endpoint of a ray.
\begin{proposition}
  This assignment gives in fact a bijection between standard ta\-bleaux
  of shape $(n-k,k)$ and crossingless matchings/cup diagrams
  of $n$ points with $k$ cups and $n-2k$ rays.
\end{proposition}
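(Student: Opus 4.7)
The plan is to exhibit a common combinatorial encoding of both sides and then verify that Fung's map $\m$ factors through it. Namely, let $\mathcal{S}$ denote the set of sequences $\sigma\in\{\mathtt T,\mathtt B\}^n$ with exactly $n-k$ letters $\mathtt T$ and $k$ letters $\mathtt B$ satisfying the ballot condition: every prefix contains at least as many $\mathtt T$'s as $\mathtt B$'s. I will biject both standard tableaux and crossingless matchings with $\mathcal{S}$, and $\m$ will be recognized as the composition of one of these bijections with the inverse of the other.

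For tableaux, send $S$ to the sequence $\sigma$ where $\sigma_i=\mathtt T$ (resp.\ $\mathtt B$) if the entry $i$ of $S$ lies in the top (resp.\ bottom) row. Since each row is read off in increasing order, $S$ is recovered from $\sigma$. The column-strict condition $S_{1,j}<S_{2,j}$ is just the statement that the $j$-th letter $\mathtt B$ in $\sigma$ comes after the $j$-th letter $\mathtt T$ for $j=1,\dots,k$, which is immediately equivalent to the ballot condition.

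For crossingless matchings, send a matching to $\sigma$ with $\sigma_i=\mathtt B$ when position $i$ is a left endpoint of a cup and $\sigma_i=\mathtt T$ when it is a right endpoint of a cup or the endpoint of a ray. The inverse is the LIFO rule: scan left to right maintaining a stack; push every $\mathtt B$; for each $\mathtt T$, pop the top of the stack and connect the popped position to the current one by a cup if the stack is non-empty, otherwise declare the current position a ray. The two points to check are: (a) the output is a crossingless matching with no cup enclosing a ray, which follows because LIFO pairings are automatically nested and a ray is emitted precisely when the parenthesis depth is zero; (b) every such matching is recovered this way, which can be proved by induction on $n$ using the observation that in a crossingless no-enclosure matching the leftmost position is either a ray (when depth zero) or a left endpoint whose partner is the first subsequent position at which the depth returns to its current level — anything else would force a crossing or an enclosed ray. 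A direct count, or the same stack analysis, shows that the ballot condition is equivalent to the stack being empty at the end, so that the algorithm produces exactly $k$ cups and $n-2k$ rays.

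Unwinding both encodings, the positions labelled $\mathtt B$ are in the tableau picture exactly the entries of the bottom row and in the matching picture exactly the left ends of cups; thus the composition of the two bijections is the recipe described by Fung, proving that $\m$ is a bijection. I expect the main technical obstacle to be part (b) above: one must rule out any other crossingless matching with the same set of $\mathtt B$-positions, and here both the no-crossing and the no-enclosure conditions are used in a non-trivial way, so this step deserves to be written out carefully.
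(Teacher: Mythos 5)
The paper states this proposition without proof, so there is nothing to compare against; judged on its own terms, your strategy (encode both sides by the $\{\mathtt T,\mathtt B\}$-word recording top/bottom row membership, respectively ray-or-right-endpoint versus left-endpoint, and characterize the common image by a lattice-word condition) is the standard and correct one, and your part (b) — that a crossingless matching with no enclosed rays is uniquely recovered from its set of left endpoints by the LIFO rule — is fine. However, the ballot condition is oriented the wrong way, and this is not a cosmetic slip: it breaks the composite. With your encoding ($\mathtt B$ = left endpoint of a cup), the word of the unique matching on $n=2$, $k=1$ is $\mathtt B\mathtt T$, which \emph{violates} your prefix condition ``every prefix has at least as many $\mathtt T$'s as $\mathtt B$'s''; conversely the word $\mathtt T\mathtt B$, which satisfies it, is mapped by your left-to-right LIFO scan to a ray at position $1$ and an unmatched left endpoint at position $2$ — not a matching. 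Likewise $\mathtt B\mathtt B\mathtt T\mathtt T\mathtt T$ (the diagram with cups $(1,4)$, $(2,3)$ and a ray at $5$, which is $\m$ of the paper's first tableau for $n=5$, $k=2$) fails your prefix condition. In particular your assertion that ``the ballot condition is equivalent to the stack being empty at the end'' is false: the correct criterion for the stack to empty is the \emph{suffix} condition that every final segment of the word contains at least as many $\mathtt T$'s as $\mathtt B$'s.

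The mismatch arises because you silently switched the tableau convention: the paper's standard tableaux have strictly \emph{decreasing} rows and columns, so the column condition reads $S_{1,j}>S_{2,j}$ (the $j$-th largest top entry exceeds the $j$-th largest bottom entry), which is exactly the suffix condition on the word — matching the cup-diagram side. Your increasing convention $S_{1,j}<S_{2,j}$ yields the prefix condition, so your two bijections land in two different sets of words and their composite is not $\m$ (indeed it is not a bijection onto matchings at all). The repair is straightforward: use the paper's decreasing convention throughout and replace the prefix ballot condition by its suffix version (equivalently, read the word right to left); with that change both halves of your argument go through and the identification with Fung's rule ``bottom row $=$ left endpoints of cups'' is correct.
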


\begin{ex}
\label{ex1} Let $k=2$, $n=5$. Then we have the following five standard tableaux
$$\young(543,21)\hspace{1.2cm}\young(542,31)\hspace{1.2cm}
\young(532,41)\hspace{1.2cm} \young(531,42)\hspace{1.2cm}\young(541,32)$$ and
the associated cup diagrams (with one orphaned point in each case):

\hspace{1.3cm}
\begin{picture}(0,35)

\put(0,23){$\bullet$}\put(17,23){\oval(10,20)[b]}

\put(10,23){$\bullet$}

\put(20,23){$\bullet$} \put(17,23){\oval(30,30)[b]}

\put(30,23){$\bullet$}

\put(40,23){$\bullet$} \put(42,23){\line(0,-1){15}}
\put(70,23){$\bullet$} \put(77,23){\oval(10,20)[b]}

\put(80,23){$\bullet$}

\put(90,23){$\bullet$} \put(98,23){\oval(10,20)[b]}

\put(100,23){$\bullet$}

\put(110,23){$\bullet$} \put(112,23){\line(0,-1){15}}
\put(140,23){$\bullet$} \put(147,23){\oval(10,20)[b]}

\put(150,23){$\bullet$}

\put(160,23){$\bullet$}

\put(170,23){$\bullet$} \put(177,23){\oval(10,20)[b]}

\put(180,23){$\bullet$} \put(162,23){\line(0,-1){15}}
\put(210,23){$\bullet$}

\put(220,23){$\bullet$} \put(227,23){\oval(10,20)[b]}

\put(230,23){$\bullet$} \put(247,23){\oval(10,20)[b]}

\put(240,23){$\bullet$}

\put(250,23){$\bullet$} \put(212,23){\line(0,-1){15}}
\put(280,23){$\bullet$}

\put(290,23){$\bullet$}\put(307,23){\oval(30,30)[b]}

\put(300,23){$\bullet$} \put(307,23){\oval(10,20)[b]}

\put(310,23){$\bullet$}

\put(320,23){$\bullet$} \put(282,23){\line(0,-1){15}}

\end{picture}
\end{ex}

\begin{ex} The following will be our running example, (and the notation should be kept in mind): Let $k=2$, $n=4$.
Then we have two standard tableaux
$$S(\nested):=\young(43,21)\hspace{1.2cm}S(\nxt):=\young(42,31)$$ where the first
corresponds to the cup diagram $\operatorname{Cup}(\nested)$ with two nested cups,
the second to the cup diagram $\operatorname{Cup}(\nxt)$ with two cups next to
each other. There are no orphaned points.
\end{ex}

Given a tableau $S$ of shape $(n-k,k)$ let $\Sm S$ be set of numbers in the
lower row of the tableau, and $\Sp S$ the set of numbers in the top row. If $S$ is standard,
the cup diagram $\m(S)$ defines a map $\si:\Sm S\to\Sp S$ sending the beginning of
a cup to its end.

Let $\de(i)=(\si(i)-i+1)/2$ be the number of cups nested inside the one connecting $i$ and $\si(i)$ for any $i\in\Sm S$.
Note that $\de(i)$ encodes the size of the cup starting at $i$. For instance, the diagram $\nested$ has $\de(2)=1$ and $\de(1)=2$.
We let $c(i)$ be the column number of $i$, i.e. the number of columns
to the left (inclusive) of the one which $i$ lies in.

\subsection{Components and matchings}
\label{sec:components-matchings}

Spaltenstein \cite{Spa76} and Vargas \cite{Var79} established a
bijection between the irreducible components of $Y$ and the standard
tableaux of shape $(n-k,k)$ which allowed them to describe the
components as closures of explicitly given locally closed subspaces:

\begin{definition} \label{defcom1} Let $S$ be a standard tableau of
  shape $(n-k,k)$. The associated irreducible component $\com S$ is the
  closure of the set of complete flags $\Fl_0\subset \cdots\subset
  \Fl_n=V$ in $Y$ such that for all $i\in\Sm S$, we have $\Fl_i\subseteq
  \Fl_{i-1}+\im N^{c(i)-1}$.
\end{definition}

Alternatively, (see \cite{Fung}) one can use the following much more handy definition: let $t_i$
be the number of indices smaller than or equal to $i$ in the top row, and
similarly for $b_i$ and the bottom row, then we have

\begin{proposition}
\label{defcom} A complete flag
$\{0\}=\Fl_0\subset
  \cdots\subset \Fl_n=V$ lies in $Y_S$ if and only if for all $i\in\Sm S$, we have
  $\en^{\de(i)}(\Fl_{\si(i)})=\Fl_{i-1}$, and for each
  $i\in\Sp S\setminus\si(\Sm S)$, we have $\Fl_{i}=\en^{-b_i}(\im\,\en^{n-k-t_i+b_i})$.
\end{proposition}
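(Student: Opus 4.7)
Denote by $\com S'$ the subset of $\spf$ cut out by the conditions of Proposition~\ref{defcom}. The plan is to prove $\com S=\com S'$ by establishing three properties: $\com S'$ is closed in $\spf$, $\com S\subseteq\com S'$, and $\com S'$ is irreducible. Together these force equality, since an irreducible closed subset containing the irreducible component $\com S$ must coincide with $\com S$ by maximality of irreducible components.

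For closedness, each equality imposed by Proposition~\ref{defcom} asserts that two algebraically varying subspaces of $V$ coincide, the second expressed as the image of a fixed iterate of $\en$ applied to a flag piece (or, for orphaned indices, an explicit preimage of such an image). Such an equality is the intersection of two containment conditions, each cut out by vanishing of determinantal minors on parametrizing matrices, so $\com S'$ is closed in $\spf$. For $\com S\subseteq\com S'$, use that by Definition~\ref{defcom1} the component $\com S$ is the closure of a locally closed subset $\com S^\circ$, so by closedness of $\com S'$ it suffices to verify $\com S^\circ\subseteq\com S'$ pointwise. This is a combinatorial argument: working along the cups of $\m(S)$ from innermost to outermost, the defining inclusions of $\com S^\circ$ together with the Springer chain $\en\Fl_j\subseteq\Fl_{j-1}$ propagate to force, by matching dimensions at each step, the equalities $\en^{\de(i)}(\Fl_{\si(i)})=\Fl_{i-1}$ for matched $i\in\Sm S$ and the explicit formulas $\Fl_i=\en^{-b_i}(\im\,\en^{n-k-t_i+b_i})$ for orphaned $i\in\Sp S\setminus\si(\Sm S)$.

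For irreducibility, I would exhibit $\com S'$ directly as an iterated $\CP^1$-bundle over a point. The orphaned equalities determine the corresponding flag pieces uniquely. For the matched indices, processing cups of $\m(S)$ from innermost outward, the equality $\en^{\de(i)}(\Fl_{\si(i)})=\Fl_{i-1}$, combined with the Springer conditions and the already-determined outer data, allows one intermediate flag piece to vary in a $\CP^1$ (a line in a suitable two-dimensional quotient) while fixing the remaining intermediate pieces. Iterating, $\com S'$ is realized as a smooth tower of $\CP^1$-bundles with $k$ fibers, hence irreducible and of dimension $k=\dim\com S$. The principal technical obstacle is the combinatorial bookkeeping: tracking how the integers $b_i,t_i,c(i),\de(i)$ interact under nested cups and orphaned rays, and checking that the dimension counts are simultaneously consistent when propagating the Definition~\ref{defcom1} inclusions into the Proposition's equalities and when building the $\CP^1$-bundle structure uniformly across all cup configurations.
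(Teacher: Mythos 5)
The paper itself gives no proof of this proposition: it is quoted from Fung's article \cite{Fung}. So the comparison is really with Fung's argument, whose overall architecture (the locus cut out by the stated equalities is closed, irreducible, and contains the Spaltenstein--Vargas cell, hence coincides with the component by maximality) your proposal correctly reproduces.

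One step fails as written, however. You assert that the equality $\en^{\de(i)}(\Fl_{\si(i)})=\Fl_{i-1}$ is the intersection of two containments, ``each cut out by vanishing of determinantal minors.'' The containment $\en^{\de(i)}(\Fl_{\si(i)})\subseteq\Fl_{i-1}$ is indeed closed, being equivalent to $\Fl_{\si(i)}\subseteq\en^{-\de(i)}(\Fl_{i-1})$; but the reverse containment $\Fl_{i-1}\supseteq$-free direction, namely $\Fl_{i-1}\subseteq\en^{\de(i)}(\Fl_{\si(i)})$, is \emph{not} a closed condition on a varying subspace: the image of a family of subspaces under a fixed linear map $f$ can drop dimension in the limit (for $f(x,y)=(x,0)$ and $U_t=\langle(t,1)\rangle$ one has $f(U_t)=\langle(1,0)\rangle$ for $t\ne0$ but $f(U_0)=0$). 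The repair is a dimension count special to the two-block case: since $\de(i)\le k$, we have $\dim\ker\en^{\de(i)}=2\de(i)$, so $\dim\en^{\de(i)}(\Fl_{\si(i)})\ge\si(i)-2\de(i)=i-1$, and therefore the single closed containment $\en^{\de(i)}(\Fl_{\si(i)})\subseteq\Fl_{i-1}$ already forces equality. With that substitution your set $\com S'$ is closed. Beyond this point, the two computations that carry the real content --- that the open cell of Definition~\ref{defcom1} satisfies the cup equalities, and that those equalities exhibit $\com S'$ as an iterated $\CP^1$-bundle of dimension $k$ --- are described but not performed; these nested-cup inductions are exactly what occupies Fung's proof, so as it stands the proposal is a sound plan with one incorrect justification rather than a complete argument.
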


Note that the condition of being in a component associated to $S$
means that the spaces $\Fl_i$ where $i$ labels either on orphaned point or the right end of a cup in
$\m(S)$ (i.e. $i\in\Sp S$) as a set, are completely determined by the
spaces $\Fl_{i-1}$ corresponding to the point at the left endpoint of a cup as a set, but this is not true for the ends of an individual cup.

\begin{ex}
\label{irredcpt}
For our running example we have
\begin{align*}
Y_{\nxt}&=\{\Fl_0\subset \Fl_1\subset N^{-1}(\Fl_0)=\langle p_1,q_1\rangle\subset \Fl_3\subset N^{-1}(\Fl_2)=\mC^4\}\subset Y\\
Y_\nested&=\{\Fl_0\subset \Fl_1\subset \Fl_2\subset N^{-1}(\Fl_1)\subset N^{-2}(\Fl_0)=\mC^4\}\subset Y.
\end{align*}
Hence $Y_{\nxt}\cong \mathbb{P}^1\times\mathbb{P}^1$, whereas $Y_\nested$ is a non-trivial $\mathbb{P}^1$-bundle over $\mathbb{P}^1$ (a Hirzebruch surface).
\end{ex}
By \cite[Proposition 5.1]{Fung}, all irreducible components are
iterated $\mathbb{P}^1$-bundles; in particular, they are smooth.

\subsection{Cohomology of components}
\label{sec:cohomology-of-components}
The variety $\flag$ carries $n$ tautological line bundles of the form
$\LB_i=\Fl_i/\Fl_{i-1}$ where we use $\Fl_i$ to denote the corresponding
tautological vector bundle on $\flag$, and its restriction to $\spf$
and $\com S$.  These line bundles generate $\mathrm{Pic}(\flag)$, and
their first Chern classes $x_i=c_1(\LB_i)$ generate the cohomology
ring $H^\bullet(X;\C)$.  This presentation is due to Borel and gives an
isomorphism of $H^\bullet(\flag;\C)$ with the algebra of coinvariants for
the obvious action of the symmetric group $S_n$ on $\C[x_1,\cdots,
x_n]$, that is,
\begin{equation*}
H^\bullet(\flag;\C)\cong \C[x_1,\cdots
x_n]/(\epsilon_1(\Bx),\cdots, \epsilon_n(\Bx))
\end{equation*}
where $\epsilon_i$ is the $i$-th elementary
symmetric polynomial in the variables $x_i$ (see e.g. \cite{Fulton}).
\begin{theorem}
\label{cohomcomp}
 The cohomology ring of $\com S$ has a natural presentation of the form
 \begin{equation*}
   H^\bullet(\com S;\C) \cong \C[\{x_i\}_{i\in\Sm S}]/(\{x_i^2\}_{i\in\Sm S}).
 \end{equation*}
The pullback map $i_S^*:H^\bullet(\flag)\to H^\bullet(\com S)$ is surjective, and given in this presentation by
 \begin{equation*}
   i_S^*(x_i)=
   \begin{cases}
     x_i & i\in \Sm S\\
     -x_{\si^{-1}(i)} & i\in \si(\Sm S)\subset\Sp S\\
     0 &{otherwise}.
   \end{cases}
 \end{equation*}
\end{theorem}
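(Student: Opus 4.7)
The plan is to prove Theorem \ref{cohomcomp} by induction on $k$, the number of cups in $m(S)$, exploiting Fung's description of $Y_S$ as an iterated $\mathbb{P}^1$-bundle. In the base case $k = 0$, every position is a ray, so by Proposition \ref{defcom} each $F_l$ is a fixed subspace of $V$; thus $Y_S$ is a point and every $i_S^*(x_l) = 0$. More generally, for any ray position $j$, $F_j$ is forced by Proposition \ref{defcom} to be a specific constant subspace of $V$, so $L_j|_{Y_S}$ is trivial and $i_S^*(x_j) = 0$.

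For the inductive step, pick an innermost cup, which must have the form $(j, j+1)$. The relation $NF_{j+1} = F_{j-1}$ forces $F_{j+1} = N^{-1}(F_{j-1})$, so this subspace is determined by $F_{j-1}$ alone. Let $S'$ denote the tableau obtained from $S$ by deleting this cup and reindexing. Forgetting $F_j$ defines a projection $\pi: Y_S \to Y_{S'}$ whose fibers are projective lines $\mathbb{P}(E)$, with $E = F_{j+1}/F_{j-1}$ a rank-$2$ bundle on $Y_{S'}$. The key claim is that $E$ is a trivial rank-$2$ bundle: using $\ker N \subset N^{-1}(F_{j-1}) = F_{j+1}$, the induced short exact sequence
\begin{equation*}
0 \to \ker N \to F_{j+1} \xrightarrow{N} F_{j-1} \to 0
\end{equation*}
forces $c_1(E) = 0$, and a case analysis on whether $\ker N \cap F_{j-1}$ is $0$ or all of $\ker N$ identifies $E$ either with the constant bundle $\ker N$ (by dimension) or, via $F_{j+1}/\ker N \xrightarrow{\sim} F_{j-1}$, with the constant quotient $F_{j-1}/N(F_{j-1})$, both of which are trivial.

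Once triviality of $E$ is established, the projective bundle formula yields $H^\bullet(Y_S) \cong H^\bullet(Y_{S'})[x_j]/(x_j^2)$, with $x_j = c_1(L_j)$ identified with minus the hyperplane class on $\mathbb{P}(E)$. The tautological sequence $0 \to L_j \to E \to L_{j+1} \to 0$ combined with $c_1(E) = 0$ then gives $i_S^*(x_{j+1}) = -i_S^*(x_j)$. Induction now establishes $i_S^*(x_{\sigma(i)}) = -i_S^*(x_i)$ for every cup $(i, \sigma(i))$, while $i_S^*(x_i) = x_i$ holds by definition of $x_i$ as the hyperplane class of the corresponding $\mathbb{P}^1$-factor. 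Surjectivity of $i_S^*$ is then automatic since its image contains the generators $\{x_i : i \in \Sm S\}$ of $H^\bullet(Y_S)$, and the dimension count $\dim H^\bullet(Y_S) = 2^k$ (from the iterated $\mathbb{P}^1$-bundle structure) matches that of the claimed quotient, so $i_S^*$ has the asserted kernel.

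The main obstacle is the triviality of the rank-$2$ bundle $E$ on $Y_{S'}$. While $c_1(E) = 0$ falls out cleanly from the $N$-surjection, establishing triviality outright (equivalently $c_2(E) = 0$) requires exploiting the rigid 2-block Jordan structure and carefully tracking, position-by-position along the flag, how $\ker N$ sits relative to $F_{j-1}$. This is where the peculiarity of the 2-block case (versus the general Springer fiber, where no such clean $\mathbb{P}^1$-bundle story holds) becomes essential and ties the combinatorics of the matching $m(S)$ to the cohomology of the component.
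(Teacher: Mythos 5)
Your overall strategy coincides with the paper's: surjectivity of $i_S^*$ from the iterated $\CP^1$-bundle structure, relations extracted from the exact sequences that $N$ induces on the tautological bundles, and a $2^k$ dimension count to rule out further relations. The one genuine error is your ``key claim'' that the rank-$2$ bundle $E=\Fl_{j+1}/\Fl_{j-1}$ on $Y_{S'}$ is trivial. It is false in general: already for $n=4$, $k=2$ and $S=S(\nested)$, peeling the inner cup gives $Y_{S'}\cong\CP(\ker N)\cong\CP^1$ and $E=N^{-1}(\Fl_1)/\Fl_1$, which is an extension of $N^{-1}(\Fl_1)/\ker N\cong \Fl_1\cong\mathcal{O}(-1)$ by $\ker N/\Fl_1\cong\mathcal{O}(1)$, i.e.\ $E\cong\mathcal{O}(1)\oplus\mathcal{O}(-1)$; correspondingly $Y_{\nested}=\CP(E)$ is a \emph{non-trivial} $\CP^1$-bundle (a Hirzebruch surface, as recorded in Example~\ref{irredcpt}), which would be impossible were $E$ trivial. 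Your proposed case analysis on $\ker N\cap\Fl_{j-1}$ cannot repair this: that intersection is generically of intermediate dimension (in the example it equals $\Fl_1$, neither $0$ nor all of $\ker N$), and the isomorphism type of a bundle is not determined fibrewise anyway.

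Fortunately you do not need triviality of $E$, only $c_1(E)=c_2(E)=0$, and this you essentially already have: the surjection $N\colon\Fl_{j+1}\to\Fl_{j-1}$ has kernel the constant bundle $\ker N$, so $[E]=[\ker N]$ in K-theory and all Chern classes of $E$ vanish. The projective bundle formula then gives $H^\bullet(\CP(E))\cong H^\bullet(Y_{S'})[x_j]/(x_j^2)$, and the tautological sequence $0\to\LB_j\to E\to\LB_{j+1}\to 0$ gives $x_{j+1}=-x_j$, exactly as you want. This K-theoretic vanishing is precisely the paper's argument, except that the paper treats all cups at once (via the two four-term sequences built from $N^{\de(i)}$ and $N^{\de(i)-1}$, which give $[\LB_{\si(i)}\oplus\LB_i]=0$ directly), whereas you induct on innermost cups; once the false claim is replaced by the K-theory statement, your induction is a legitimate reorganization. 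One further small point: for an orphaned index $i$ you deduce $x_i=0$ from constancy of $\Fl_i$ alone, but $\LB_i=\Fl_i/\Fl_{i-1}$, so you also need $\Fl_{i-1}$ to be constant on $\com S$ (true, but it deserves a word, as the point $i-1$ may be the right end of a cup).
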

\begin{proof}
  By \cite[Proposition 5.1]{Fung},  $\com S$ an iterated $\CP^1$-bundle, with the
  maps to $\CP^1$ given by the line bundles $\LB_i$ for $i\in \Sm S$. Hence,
  the cohomology ring $H^\bullet(\com S;\C)$ is generated by their first
  Chern classes (since these give a generating set in the associated
  graded with respect to the filtration coming from the Leray-Serre
  spectral sequence).  Since these line bundles are pullbacks from
  $\flag$, the map $i_S^*$ is surjective.

  We will find relations between these using the Chern classes of
  related bundles.  First, note that by the definition of $\com S$,
  we have exact sequences of vector bundles for each $i\in\Sm S$
  \begin{align*}
    0\too \ker N^{\de(i)}\too \Fl_{\si(i)}\overset{\en^{\de(i)}}\too& \Fl_{\si(i)}\too \Fl_{\si(i)}/\Fl_{i-1}\too 0\\
     0\too \ker N^{\de(i)-1}\too \Fl_{\si(i)-1}\overset{\en^{\de(i)-1}}\too& \Fl_{\si(i)-1}\too \Fl_{\si(i)-1}/\Fl_{i}\too 0
  \end{align*}
 It may happen at $\de(i)=1$, in which case we interpret $N^{0}$ as
 the identity map, and the lower exact sequence becomes trivial.
  Since $\ker \en^{\de(i)}$ is a trivial subbundle of $\Fl_n$, we obtain
  in K-theory
  \begin{equation*}
    [\Fl_{\si(i)}/\Fl_{i-1}]-[\Fl_{\si(i)-1}/\Fl_i]=[\LB_{\si(i)}\oplus\LB_{i}]=0.
  \end{equation*}
  The Chern classes of a bundle only depend on its class in
  K-theory, so that the following equalities hold in $H^\bullet(\com S;\C)$:
  \begin{align*}
    c_1(\LB_{\si(i)}\oplus\LB_i)&=x_{\si(i)}+x_{i}=0\\
    c_2(\LB_{\si(i)}\oplus\LB_i)&=x_{\si(i)}x_{i}=0.
  \end{align*}
  If $i\in\Sp S\setminus\si(\Sm S)$, then the bundles $\Fl_i$ and $\Fl_{i-1}$
  are both trivial, so $x_i=0$.

  Thus, the Chern classes $x_i$ for $i\in\Sm S$ generate the cohomology
  of $\com S$, and the relations which we claimed hold.  These must
  be sufficient, since the quotient by the relations we have proven
  above and $H^\bullet(\com S)$ both have dimension $2^k$, the latter by \cite[Theorem 5.3]{Fung}.
\end{proof}

\begin{ex}
Let $\dn\cong \mC[X]/(X^2)$.
We have isomorphisms of graded rings
$$H^\bullet(Y_{C(\nested)})\cong\mC[x_1,x_2]/(x_1^2,x_2^2)\cong\dn\otimes
\dn,$$ and
$$H^\bullet(Y_{C(\nxt)})\cong\mC[x_1,x_3]/(x_1^2,x_3^2)\cong\dn\otimes
\dn.$$
\end{ex}

\section{The Bia\l ynicki-Birula paving and stable manifolds}
\label{sec:torus-acti-irred}

\subsection{The torus action and fixed points}
\label{sec:fixed points}
The torus $(\mathbb{C}^*)^n$ of diagonal matrices in the basis given by the
$p_i$'s and $q_i$'s acts on the flag variety $\flag$ in the natural way and
induces on the Springer fiber $\spf$ an action of a maximal torus of
$Z_G(\en)$. This torus is 2-dimensional, and its action is explicitly given by
$(r,s)\cdot p_i=rp_i, (r,s)\cdot q_i=sq_i$ for $(r,s)\in(\C^*)^2$.\\

% The fixed points of this torus action are in bijection with column strict
% tableaux of shape $(n-k,k)$. These tableaux can be encoded as a
% sign sequence, just as standard tableaux are, with minuses on indices in the
% bottom row, and pluses on those in the top.

 This action has isolated fixed points which we want to
label by row strict tableaux of $(n-k, k)$-shape (i.e. tableaux which are decreasing in the rows, but with no condition on the columns). To any arbitrary row strict tableau $w$ of shape $(n-k,k)$ we associate the
full flag $\fx w$ such that $$\fxi iw=\langle \{p_j,q_r| j\leq t_i, r\leq b_i\}
\rangle,$$ where $t_i$ is the number of indices smaller than or equal to $i$ in
the top row, and similarly for $b_i$ and the bottom row. Note that the standard
flag is of the form $\fx{w^{n-k,k}_{dom}}$, where $w^{n-k,k}_{dom}$ is the row
strict tableau with $1,2,\ldots, n-k$ in the first row; for example
$w^{3,2}_{dom}=\young(321,54)\quad$ and $\;w^{2,2}_{dom}=\young(21,43)$

To any row strict tableau $w$ of shape $(n-k,k)$ we will later associate a
crossingless matching $\m(w)$ of $n$ points by the same rule as before for standard tableaux (but the resulting matching might have in the extreme case only rays and no cups at all); see the paragraph before Theorem~\ref{inclusions} for a precise definition.
There are $\binom{n}{n-k}$ row strict tableaux, which is also the same as the number of fixed points and $\Phi$ defines an explicit bijection:

\begin{lemma}
The map $\Phi: w\mapsto \fx{w}$ defines a bijection between row strict tableaux
of shape $(n-k,k)$ and torus fixed points of $Y$.
\end{lemma}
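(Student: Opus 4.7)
The plan is to pin down the $T$-fixed points of $\spf$ by first describing which subspaces of $V$ are fixed by the $2$-dimensional torus $T\subset Z_G(\en)$ and then imposing the Springer fiber condition that they also be $\en$-stable. I would begin by observing that $T$ acts on $V$ with only two weights: the character $(r,s)\mapsto r$ on the line spanned by each $p_i$ and $(r,s)\mapsto s$ on the line spanned by each $q_j$. Because these two characters are distinct, a subspace $F\subset V$ is $T$-stable if and only if $F=(F\cap P)\oplus (F\cap Q)$; equivalently, the $T$-stable subspaces of $V$ are precisely $P'\oplus Q'$ for arbitrary $P'\subset P$ and $Q'\subset Q$.

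Next I would impose $\en$-invariance. Since $\en$ preserves $P$ and $Q$ separately, acting on each as the standard nilpotent shift $p_i\mapsto p_{i-1}$, $q_j\mapsto q_{j-1}$, the only $\en$-stable subspaces of $P$ (resp. $Q$) are $\langle p_1,\dots,p_t\rangle$ for $0\le t\le n-k$ (resp. $\langle q_1,\dots,q_b\rangle$ for $0\le b\le k$). Combining the two observations, a $T$-fixed point of $\spf$ is precisely a flag of the form
\[
\Fl_i = \langle p_1,\dots,p_{t_i},\, q_1,\dots,q_{b_i}\rangle, \qquad 0\le i\le n,
\]
where $(t_i), (b_i)$ are weakly increasing sequences with $t_0=b_0=0$, $t_n=n-k$, $b_n=k$, $t_i+b_i=i$, and exactly one of $t_i, b_i$ strictly incrementing at each step.

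Such data are in bijection with labellings $\{1,\dots,n\}\to\{\mathrm{top},\mathrm{bottom}\}$ which record, for each $i$, whether the jump occurs in $t$ or in $b$; the constraints force exactly $n-k$ labels of $\mathrm{top}$ and $k$ of $\mathrm{bottom}$. Given such a labelling, placing the $\mathrm{top}$-indices in strictly decreasing order into the top row and the $\mathrm{bottom}$-indices in strictly decreasing order into the bottom row of the Young diagram of shape $(n-k,k)$ produces a row-strict tableau $w$, and this construction is clearly inverse to the assignment sending $w$ to $(t_i),(b_i)$ defined by ``$t_i$ (resp.\ $b_i$) is the number of entries $\le i$ in the top (resp.\ bottom) row of $w$.'' Matching with the formula $\fxi i w=\langle p_j, q_r\mid j\le t_i,\; r\le b_i\rangle$ stated just before the lemma shows that the flag recovered from the $T$-fixed point data is precisely $\fx w$, giving an explicit two-sided inverse to $\Phi$.

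I do not expect a real obstacle here, as the argument is a direct weight-space analysis combined with an elementary combinatorial identification; the only step that requires genuine care is the final bookkeeping matching the sequences $(t_i,b_i)$ to the row-strict tableau $w$, but this is entirely mechanical once the labelling-by-jumps viewpoint is set up.
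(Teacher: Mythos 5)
Your proof is correct and follows essentially the same route as the paper's: both arguments decompose a $T$-fixed flag into its intersections with the two weight spaces $P$ and $Q$, and then use that $\en$ restricts to a regular nilpotent on each, so that there is a unique invariant subspace of every dimension. The only difference is that you make the final combinatorial identification with row-strict tableaux (via the jump sequences $(t_i,b_i)$) more explicit than the paper does, which is harmless.
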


\begin{proof}
  It is easy to check that $\fx{w}$ is in fact a point in $Y$,
  and obviously a fixed point, since all its component subspaces are
  spanned by weight vectors. The map is $\Phi$ injective by construction.

  On the other hand, if $\mathcal{F}$ is a $T$-fixed flag, then each of
  its constituent subspaces $\Fl_i$ is spanned by the intersections
  $\Fl_i\cap P$ and $\Fl_i\cap Q$.  These, in turn are invariant subspaces
  for $\en|_P$ and $\en|_Q$.  But these restrictions are regular
  nilpotents, so there is a unique invariant subspace of any possible
  dimension, which is of the form $\langle p_1,\ldots, p_i\rangle$ (and
  similarly for $q_j$).  Thus, $\mathcal{F}$ is of the form $\fx{w}$ for some row-strict tableau, and $\Phi$ is surjective.
\end{proof}

Let $w$, $S$ be tableaux of shape $(n-k,k)$, where $w$ is row strict and $S$ is
standard with associated cup diagram $\m(S)$. We consider the sequences ${\bf
a}=a_1 a_2 a_3\ldots a_n$, where $a_i=\wedge$ if $i\in w_\wedge$ and $a_i=\vee$
if $i\in w_\vee$ and call it the {\it weight sequence} of $w$. For instance
$w_{dom}^{3,2}$ has weight sequence $\wedge \wedge\wedge\vee\vee$. (We refer to Example \ref{exattract} for more examples of weight sequences with their cup diagrams.) We can put
the weight sequence on top of the diagram $\m(S)$ and obtain a diagram $w\m(S)$
where the upper ends of each cup or line are decorated with an orientation. We
call $w\m(S)$ {\it oriented} if these decorations induce a well-defined orientation
on $\m(S)$. For instance if $\m(S)$ is one of the cup diagrams from
Example~\ref{ex1} then $w_{dom}^{3,2}\m(S)$ is only oriented if $\m(S)$ is the
last diagram in the list. Note that the number of cups in a cup diagram
$\m(S)$, where $S$ is a standard tableau, is always $k$, hence for any
orientation $w\m(S)$, the decoration at each orphaned vertex will be a
$\wedge$.

\begin{lemma}
\label{fixed-points} A fixed point $\fx{w}$ is in an irreducible component
$\com S$ associated with a cup diagram $C$ if and only if $wC$ is an oriented
cup diagram. In particular, every component contains exactly $2^k$ fixed points.
\end{lemma}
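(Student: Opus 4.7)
The plan is to verify the ``if'' direction directly from Proposition~\ref{defcom}, and then obtain the converse together with the count of $2^k$ from a cardinality argument.

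For the ``if'' direction, I would suppose $wC$ is oriented, with $C=\m(S)$. On the torus fixed flag $\fx w$ one has $\Fl_j=\langle p_1,\ldots,p_{t_j},q_1,\ldots,q_{b_j}\rangle$, so a direct calculation gives
\begin{equation*}
\en^{\de(i)}(\Fl_{\si(i)})=\langle p_1,\ldots,p_{t_{\si(i)}-\de(i)},q_1,\ldots,q_{b_{\si(i)}-\de(i)}\rangle.
\end{equation*}
Hence the equality $\en^{\de(i)}(\Fl_{\si(i)})=\Fl_{i-1}$ required by Proposition~\ref{defcom} reduces to the combinatorial identity $t_{\si(i)}-t_{i-1}=\de(i)=b_{\si(i)}-b_{i-1}$, meaning that the positions in $[i,\si(i)]$ carry exactly $\de(i)$ labels $\wedge$ and $\de(i)$ labels $\vee$. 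I would prove this by induction on cup nesting: the $\de(i)$ cups contained in $[i,\si(i)]$ (the outer cup together with its nested inner ones) each contribute one $\wedge$ and one $\vee$ by orientation, and no rays lie in this interval since a downward ray would cross any cup nested above it. The parallel condition at orphaned positions $i\in\Sp S\setminus\si(\Sm S)$ is a similar direct check, relying on the fact that in an oriented diagram every ray must be labeled $\wedge$ (the $k$ available $\vee$'s being consumed by the cup endpoints, one per cup).

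For the converse I would argue by counting. The orientations of $C$ are parametrized by independently choosing $\wedge\vee$ or $\vee\wedge$ at each of the $k$ cups, so there are exactly $2^k$ row strict tableaux $w$ with $wC$ oriented. On the other hand, by Fung's Proposition~5.1 the component $\com S$ is a smooth projective iterated $\CP^1$-bundle carrying a $T$-action with isolated fixed points, so Bia\l ynicki-Birula theory (or equivariant localization) gives $|\com S^T|=\chi(\com S)=\dim_\C H^\bullet(\com S;\C)=2^k$, the last equality by Theorem~\ref{cohomcomp}. Combining these facts with the injection $\{w:wC\text{ oriented}\}\hookrightarrow \com S^T$ provided by the ``if'' direction (injective because $\Phi$ is), both sides have cardinality $2^k$ and the map must be a bijection; this simultaneously yields the converse implication and the ``exactly $2^k$ fixed points per component'' count.

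The main obstacle is the ray part of the ``if'' direction, where the preimage $\en^{-b_i}(\im\en^{n-k-t_i+b_i})$ splits into cases depending on whether $n-k-t_i+b_i$ exceeds $k$ or $n-k$. However, once the orientation pins down every label and the cup-balance identity from the earlier step is available, the required inequalities (for instance $t_i\ge b_i$) and the expected dimension of the preimage become automatic, so the verification is routine rather than conceptually hard.
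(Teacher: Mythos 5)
Your proof is correct, but the converse direction takes a genuinely different route from the paper. The ``if'' direction is essentially the paper's argument: both reduce the cup condition $\en^{\de(i)}(\Fl_{\si(i)})=\Fl_{i-1}$ at the fixed flag to the statement that the interval $[i,\si(i)]$ carries equally many $\wedge$'s and $\vee$'s, and both prove this by induction on nesting using the fact that no ray can sit under a cup (your version is slightly more explicit about the $t_j,b_j$ bookkeeping). For the ``only if'' direction, however, the paper does not count: it observes that the cup condition is actually \emph{equivalent} to the balance condition, and then argues by the same induction that, once all inner cups are oriented, balance forces the outer cup's endpoints to carry opposite labels; the $2^k$ count is then a corollary of the established bijection. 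You instead deduce the converse from a cardinality argument, comparing the $2^k$ orientations of $C$ with $|\com S^T|=\chi(\com S)=\dim H^\bullet(\com S)=2^k$. This is legitimate and not circular, since Theorem~\ref{cohomcomp} rests on Fung's results rather than on this lemma, but it imports more machinery (smoothness and $T$-stability of $\com S$ --- the latter because the connected torus permutes, hence fixes, each component --- plus the Bia\l ynicki-Birula fixed-point/Euler-characteristic identity), whereas the paper's argument is purely combinatorial and in addition tells you pointwise \emph{which} cup condition fails when $wC$ is not oriented. Your counting argument buys brevity at the cost of hiding that local information; if you adopt it, do make the $T$-stability of $\com S$ and the non-circularity of the dimension count explicit.
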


\begin{proof}
Let first $C$ be oriented with the orientation on all cups pointing (counterclockwise) from left to  right (and all
lines pointing up). This is exactly the case when $w=S$ is the standard
tableau associated with $C$. We claim $\fx{w}$ satisfies the conditions
of Proposition~\ref{defcom}. If $i$ is on the top row, then there are exactly
$n-k-c(i)+1$ numbers smaller than or equal to $i$ on the top row, and so $\Fl_i/\Fl_{i-1}$ is
spanned by $p_{t_i}=p_{n-k-c(i)+1}\in \im N^{c(i)-1}$. If $i$ is on the bottom row, then
$\Fl_i/\Fl_{i-1}$ is spanned by $p_{n-c(i)+1}\in\im N^{c(i)-1}$. The claim follows.

Consider now the general case. Let first
$i$ and $\sigma(i)$ be the labels for the two endpoints of a cup. The condition
$\en^{\delta(i)}(\Fl_{\sigma(i)})=\Fl_{i-1}$ is equivalent to exactly half of the
indices between $i$ and $\si(i)$ (inclusive) are contained in $\Sp S$ (or $\Sm S$
respectively). For cups connecting two points next to each other this is
directly equivalent to being oriented. By induction on the length of the cup,
we may assume that each cup between $i$ and $\si(i)$ is oriented. Since there
are no orphaned points below a cup, getting exactly half $\wedge$'s and half
$\vee$, means the labels $i$ and $\sigma(i)$ must carry the opposite
orientations, i.e. the cup is oriented.

Now $i\in\Sp S\backslash\si(\Sm S)$ is the same as saying the point with label $i$
is orphaned. The necessary condition for $\fx w$ only depends on $c(i)$, which
is the same for all $w$ where $wC$ is oriented, because it only depends on the
number of cups and lines to the left of the point $i$. Hence the argument at
the beginning of the proof implies the lemma.
\end{proof}

\begin{ex}
\label{The wis}
There are six row strict tableaux in case $n=4$, $k=2$, hence six fixed points
$w_1, w_2,\ldots, w_6$, corresponding to the six weight sequences
\begin{equation*}
\wedge\wedge\vee\vee, \quad \wedge \vee\wedge\vee, \quad \vee\wedge\wedge\vee,
\quad \wedge\vee\vee\wedge,\quad
\vee\wedge\vee\wedge,\quad\vee\vee\wedge\wedge.
\end{equation*}
The fixed point $w_1$ is the standard flag. Now the component $Y_{S(\nxt)}$
contains $w_i$, $i\in\{2,3,4,5\}$, whereas $Y_{S(\nested)}$ contains the $w_i$,
$i\in\{1,2,5,6\}$.
\end{ex}

\subsection{The paving}
\label{sec:paving}
If we choose a cocharacter $\C^*\hookrightarrow T$ which has the
same fixed points as the whole torus, then we can consider the behavior of
points as $t$ approaches infinity.  We will fix the choice of $t\mapsto
(t^{-1},t)$, that is, subspaces are attracted toward the $q_i$'s as $t$
approaches $\infty$ and towards the $p_i$'s as $t$ approaches $0$.

\begin{definition*}
If $\fx w\in \spf^T$ is a torus fixed point, then we denote the {\bf stable manifold} or {\bf attracting set}
\begin{equation*}
  \st w^0 =\{y\in \spf|\lim_{t\to\infty}t\cdot y=\fx w\},
\end{equation*}
and its closure $\st w=\overline{\st w^0}$.
\end{definition*}

For each flag $\mathcal{F}$ in $Y$, we can obtain a flag $\mathcal{F}'$ (with no longer necessarily distinct spaces) in $P$ by taking the intersections $\tP_i=\Fl_i\cap P$, and similarly in $V/P\cong Q$ given by $\tQ_i=\Fl_i/(\Fl_i\cap P)$.  We can define the new flag $\mathcal{F}'$ by putting $F_i':=\tP_i+\tQ_i\subset P\oplus Q= V$, which is obviously $T$-equivariant.
\begin{proposition}
  A flag $\mathcal{F}$ in $\spf$ is contained in $\st w^0$ if and only if the new flag $\mathcal{F}'$ obtained from it by the procedure above is $\fx w$.
\end{proposition}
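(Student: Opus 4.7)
The plan is to identify $\mathcal{F}\mapsto\mathcal{F}'$ with the standard ``associated graded'' degeneration under the cocharacter $t\mapsto(t^{-1},t)$. Since this scales $P$ by $t^{-1}$ and $Q$ by $t$, as $t\to\infty$ the $P$-component of any vector is contracted while the $Q$-component dominates; by definition $\st w^0$ is the locus of flags whose $t\to\infty$ limit is $\fx w$, so it suffices to show that for every $\mathcal{F}\in\spf$ the limit equals $\mathcal{F}'$.

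First I would handle a single subspace $\Fl\subset V$. Pick a basis $p_1^\Fl,\dots,p_a^\Fl$ of $\tP=\Fl\cap P$ and extend to a basis of $\Fl$ by vectors $v_r=\alpha_r+\beta_r$, $r=1,\dots,\dim \Fl-a$, with $\alpha_r\in P$ and the $\beta_r\in Q$ nonzero and linearly independent; this is possible since the $\beta_r$ project to a basis of $\tQ=\Fl/(\Fl\cap P)\hookrightarrow V/P\cong Q$. Applying $t$ and then rescaling each $v_r$ by $t^{-1}$, which leaves the spanned subspace unchanged, gives $t^{-2}\alpha_r+\beta_r\to\beta_r$. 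Hence in the appropriate Grassmannian
\begin{equation*}
  \lim_{t\to\infty}t\cdot \Fl=\langle p_1^\Fl,\dots,p_a^\Fl,\beta_1,\dots,\beta_{\dim \Fl-a}\rangle=\tP+\tQ=\Fl'.
\end{equation*}

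Doing this in every degree simultaneously, the identity $\dim(\Fl_i\cap P)+\dim(\Fl_i/(\Fl_i\cap P))=i$ together with the inclusions $\tP_i\subseteq\tP_{i+1}$ and $\tQ_i\subseteq\tQ_{i+1}$ shows $\mathcal{F}'$ is indeed a full flag, so the componentwise Grassmannian limits assemble into $\lim_{t\to\infty}t\cdot\mathcal{F}=\mathcal{F}'$ in $\flag$. Since $\spf$ is closed and $T$-stable in $\flag$, we get $\mathcal{F}'\in\spf$; since each $\Fl_i'$ is spanned by $T$-weight vectors, $\mathcal{F}'$ is a $T$-fixed flag in $\spf$, hence of the form $\fx v$ for some row-strict $v$ by the preceding lemma. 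We conclude $\mathcal{F}\in\st w^0$ iff $\lim_{t\to\infty}t\cdot\mathcal{F}=\fx w$ iff $\mathcal{F}'=\fx w$.

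The one step requiring genuine care is the justification that the ``rescale by $t^{-1}$'' trick produces an honest limit in the Grassmannian, rather than merely a pointwise convergence of spanning sets; this is exactly the standard flat degeneration of a subspace to its associated graded with respect to the filtration $P\subset V$ under a one-parameter subgroup of $\mathrm{GL}(V)$, and is the only non-formal input needed.
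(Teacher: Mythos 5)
Your proof is correct and follows the same strategy as the paper: both arguments establish the proposition by showing that $\lim_{t\to\infty}t\cdot\mathcal{F}$ equals the split flag $\mathcal{F}'$. The only difference is in execution — you compute the limit explicitly via a basis adapted to the filtration $P\subset V$ (the standard flat degeneration to the associated graded), whereas the paper argues more softly that $\Fl_i\cap P$ is constant along the orbit, can only grow in the closure, and cannot grow because $P$ is the minimal weight space; your version is, if anything, the more self-contained of the two.
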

\begin{proof}
  Obviously, the new flag $\mathcal{F}'$ does only depend on the orbit of $\mathcal{F}$, in the sense that it does not change if we move inside the torus orbit $O$ containing $\mathcal{F}$. Thus, for any point $\mathcal{G}$ in the closure of $O$ we have $(\mathcal{F}_i\cap P)\subseteq(\mathcal{G}_i\cap P)$ and hence $(\mathcal{F}'_i\cap P)\subseteq(\mathcal{G}'_i\cap P)$ for any $1\leq i\leq n$, since containing a vector is a closed
  condition on a subspace.

  On the other hand, since $P$ has minimal
  weight under $\C^*$, no vector not in $P$ is attracted to $P$ as
  $t\to \infty$, so the size of the intersection with $P$ can only stay the same or
  decrease in that limit.  Thus intersection with $P$ must be fixed
  under the limit.  Since the image in $Q$ has complementary
  dimension, it must also be fixed.
\end{proof}

This makes it clear that $\st w^0$ is
algebraically isomorphic to an affine space (in particular,
diffeomorphic to a disk), since the set of vector spaces projecting
to a given one under a linear map is an affine space.

The structure of these stable manifolds can be understood in terms of
cup diagrams, in much the same way as the structure of the components. To $w$  we attach two (in general different) cup diagrams, $\m(w)$ and $C(w)$ as follows:

For each fixed point $\fx{w}$, there is the diagram $\m(w)$ with the property that $w\m(w)$ has the maximal number of cups amongst all cup diagrams $C$ such that $wC$ is oriented and contains only counter-clockwise cups.  This diagram will have $k_w\leq k$ cups, with equality
$k_w=k$ if and only if $w$ is standard.  One can build this diagram
inductively by adding an arc between any adjacent pair $\vee\wedge$,
and then continuing the process for the sequence with these points
excluded. We then add lines to the remaining points. We call $\m(w)$ {\it the cup diagram associated with $w$}.

Rather than adding these lines, we could complete to an oriented cup
diagram $C(w)$ with $k$ cups, by matching all the $\vee$'s in the only possible way. Call the corresponding
standard tableau $\St w$.

\begin{theorem}
\label{inclusions}
Let $w$ be a row strict tableau. Then $\st w$ is
   the subset of $\com {\St w}$ containing exactly the flags which satisfy the additional property: if $i\in w_{\wedge}\cap \St w_{\vee}$, then $F_i$ coincides with the $i$th subspace of the fixed point $\fx w$.

In particular, for any standard tableau $S$, we have $\st S=\com S$.
\end{theorem}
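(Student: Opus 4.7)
The plan is to identify $\st w$ with the closure of the Bia\l ynicki-Birula cell of the torus fixed point $\fx w$ inside the smooth projective variety $\com{\St w}$, using Fung's iterated $\CP^1$-bundle description. By Lemma~\ref{fixed-points}, $\fx w\in\com{\St w}$, since $wC(w)$ is oriented by construction with $k$ cups. The preceding Proposition shows $\st w^0$ is an affine space, hence irreducible; writing $\spf=\bigcup_S\com S$ as a union of closed components gives $\st w^0=\bigcup_S(\st w^0\cap \com S)$ as a finite union of closed subsets of $\st w^0$, so irreducibility forces $\st w^0\subseteq \com{S_0}$ for a unique standard $S_0$ necessarily containing $\fx w$. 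Moreover, for any standard $S$ with $\fx w\in\com S$, the intersection $\st w^0\cap\com S$ is precisely the Bia\l ynicki-Birula cell of $\fx w$ inside $\com S$ for the $\mathbb{C}^\ast$-action.

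To pin down $S_0$ I would compare dimensions. By \cite[Prop.~5.1]{Fung}, each $\com S$ is a smooth iterated $\CP^1$-bundle with one $\CP^1$-factor per cup of $C(S)$. Under the cocharacter $(r,s)=(t^{-1},t)$ the $q$-direction is attracted, so on each $\CP^1$-factor the counterclockwise orientation ($\vee$ on the left, $\wedge$ on the right) is attracting and the clockwise orientation is repelling. Hence the Bia\l ynicki-Birula cell of $\fx w$ in $\com S$ has dimension equal to the number of cups of $C(S)$ that are counterclockwise at $w$. Such a family of cups is a noncrossing $\vee\wedge$-matching of the weight sequence of $w$, so has at most $k_w$ elements, with equality when $C(S)=C(w)$, where the counterclockwise cups are exactly $\m(w)$. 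A short check (using that alternative maximal noncrossing $\vee\wedge$-matchings of $w$ cannot be extended to a full crossingless $k$-cup diagram oriented by $w$) shows the maximum is attained only at $S=\St w$. Thus $\st w^0\subseteq\com{\St w}$, and $\st w^0$ is the Bia\l ynicki-Birula cell of $\fx w$ in $\com{\St w}$, an affine space of dimension $k_w$.

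The closure of this cell in $\com{\St w}$ is obtained by trivialising the $\CP^1$-factors on which $\fx w$ sits at the repelling fixed point. These are exactly the factors associated with the clockwise cups of $C(w)$ at $w$, i.e.\ the synthetic cups of $C(w)\setminus\m(w)$, and a cup is synthetic with left endpoint at $i$ precisely when $i\in w_\wedge\cap\St w_\vee$. Trivialising such a $\CP^1$-factor at $\fx w$ amounts, via the iterated bundle structure, to fixing $\Fl_i=(\fx w)_i$, so the cell closure is exactly the subset of $\com{\St w}$ described in the theorem. For the ``in particular'' statement, when $w=S$ is standard $\m(S)=C(S)$ contains no synthetic cups, $w_\wedge\cap S_\vee=\emptyset$, the extra conditions are vacuous, and the Bia\l ynicki-Birula cell already has the full dimension $k$; thus $\st S=\com S$. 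The main technical obstacle is the combinatorial analysis in the middle paragraph: identifying the cups of $C(S)$ that are counterclockwise at $w$ as a noncrossing $\vee\wedge$-matching of $w$ and verifying that only $S=\St w$ realises $k_w$ such cups inside a full crossingless $k$-cup diagram.
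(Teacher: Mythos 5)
Your route is genuinely different from the paper's. The paper never invokes Bia\l ynicki-Birula theory on the smooth components: it works directly with the linear-algebra characterization of $\st w^0$ from the preceding Proposition (a flag lies in $\st w^0$ iff each $\Fl_i\cap P$ and $\Fl_i/(\Fl_i\cap P)$ match those of $\fx w$), verifies the relation $\Fl_{i+1}=\en^{-1}(\Fl_{i-1})$ by hand for a minimal cup of $\m(w)$, and then inducts by applying $\en$ to remove that cup, reducing to the base case where $\m(w)$ has no cups and $\st w=\{\fx w\}$. Your argument instead localizes inside a single smooth component and converts the problem into a tangent-weight computation plus a combinatorial extremal statement. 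That is a conceptually attractive picture (the cell dimension is read off from the orientations of the cups), but it shifts the whole burden onto two claims that you do not prove.

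First, the uniqueness of the maximizer: you need that among all standard $S$ with $wC(S)$ oriented, only $S=\St w$ carries $k_w$ counterclockwise cups at $w$. Without this, the dimension count only places $\st w^0$ in \emph{some} component realizing the maximum, not in $\com{\St w}$. The claim is true, but it is not a one-line parenthesis fact --- maximum noncrossing $\vee\wedge$-matchings of a weight sequence are not unique in general (for $\vee\vee\wedge$ both $(1,3)$ and $(2,3)$ are maximal); what saves you is that the counterclockwise cups must extend to a full crossingless $k$-cup diagram oriented by $w$, and that argument has to be written out. Second, the closure step is not justified as stated: that $\fx w$ sits at the repelling end of the fiber attached to a synthetic cup tells you, via the Proposition on attracting sets, only that $\Fl_i\cap P=(\fx w)_i\cap P$ and that the image in $Q$ is correct; this does \emph{not} by itself force $\Fl_i=(\fx w)_i$, since a subspace can be sheared off the fixed one while keeping the same $P$-intersection and $Q$-image. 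This gap is repairable: let $W_w\subseteq\com{\St w}$ be the closed locus cut out by $\Fl_i=(\fx w)_i$ for $i\in w_\wedge\cap \St w_\vee$; it is irreducible of dimension $k_w$ (an iterated $\CP^1$-bundle indexed by the cups of $\m(w)$, using that synthetic cups never sit inside cups of $\m(w)$), its BB cell at $\fx w$ is dense and lies in $\st w^0$, and since $\st w^0$ is irreducible of dimension $k_w$ the closed full-dimensional subset $W_w\cap\st w^0$ must be all of $\st w^0$; hence $\st w=W_w$. With these two points supplied, your proof goes through.
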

\begin{proof}
First we confirm that these relations hold on $\st w^0$ (and thus on $\st w$, since they are closed conditions).

Let $\mathcal{F}$ be a point in $\st w^0$. Let us first assume there is at least one cup in $\m(w)$, so in particular a minimal one not containing any other cup. This means there is some index $i\in w_\vee$ with $\de(i)=1$. The result we desire is that $\Fl_{i+1}=\en^{-1}(\Fl_{i-1})$.

  First, note that since the index $i+1$ is marked with an $\wedge$ in $w$,
  then we must have \begin{equation*}
\Fl_{i+1}\supset \Fl_{i}+\en_P^{-1}(\Fl_{i}\cap
  P).    \end{equation*}

On the other hand, since $i$ is marked with an $\up$, we must have  $\Fl_{i}\cap
  P = \Fl_{i-1}\cap
  P$ and it follows
\begin{equation*}
\en^{-1}(\Fl_{i-1})\supset \Fl_{i}+\en_P^{-1}(\Fl_{i-1}\cap  P) = \Fl_{i}+\en_P^{-1}(\Fl_{i}\cap P)
\end{equation*}
All of these spaces are of dimension $i+1$, so we must have $\Fl_{i+1}=\en^{-1}(\Fl_{i-1})$.

Let $w'$ denote $w$ with $i,i+1$ removed. Applying $\en$ to all spaces of dimension bigger than $i+1$ provides a map $q_i: \st w^0\to \st {w'}^0$ which extends to a map $q_i:\st w\to \st {w'}$ between the closures.  The relation for a cup in $S(w')$ pulls back to that for the corresponding cup of $S(w)$.

Thus, by induction, we may reduce to the case where there are no cups in $\m(w)$ (that is, $w$ is a series of $\wedge$'s followed by $\vee's$).  In this case, our claim simply reduces to the claim that $\st w=\{\fx w\}$. This is indeed the case, since for any index in $w_{\wedge}$, we must have $F_i\subset P$, and $\en$ acts regularly on $P$ so all $\en$-invariant subspaces are also $T$-equivariant.  Similarly, for any $i\in w_{\vee}$, we must have $F_i\supset P$, and $\en$ acts regularly on $V/P\cong Q$. Therefore $\mathcal{F}$ satisfies the required relations.

On the other hand $\fx w$ obviously satisfies the conditions coming from cups in $C(w)$, and our requirement on elements of $w_{\wedge}\cap \St w_{\vee}$, and any flag satisfying these relations is in the closure of $\st w^0$.
\end{proof}

\begin{ex}
\label{exattract}
The cup diagrams $\m(w_i)$ associated to the weights $w_i$, $1\leq i\leq 6$ from Example~\ref{The wis} are as follows:\\

\begin{picture}(20,35)
\put(0,23){$\bullet$}\put(2,23){\line(0,-1){15}}

\put(10,23){$\bullet$}\put(12,23){\line(0,-1){15}}

\put(20,23){$\bullet$}\put(22,23){\line(0,-1){15}}

\put(30,23){$\bullet$}\put(32,23){\line(0,-1){15}}

\put(60,23){$\bullet$} \put(62,23){\line(0,-1){15}}

\put(70,23){$\bullet$} \put(77,23){\oval(10,20)[b]}

\put(80,23){$\bullet$}

\put(90,23){$\bullet$} \put(92,23){\line(0,-1){15}}

\put(120,23){$\bullet$} \put(127,23){\oval(10,20)[b]}

\put(130,23){$\bullet$}

\put(140,23){$\bullet$} \put(142,23){\line(0,-1){15}}

\put(150,23){$\bullet$} \put(152,23){\line(0,-1){15}}

\put(180,23){$\bullet$} \put(182,23){\line(0,-1){15}}

\put(190,23){$\bullet$} \put(192,23){\line(0,-1){15}}

\put(200,23){$\bullet$} \put(207,23){\oval(10,20)[b]}

\put(210,23){$\bullet$}

\put(240,23){$\bullet$}\put(247,23){\oval(10,20)[b]}

\put(250,23){$\bullet$}

\put(260,23){$\bullet$} \put(267,23){\oval(10,20)[b]}

\put(270,23){$\bullet$}

\put(300,23){$\bullet$}\put(317,23){\oval(30,30)[b]}

\put(310,23){$\bullet$}\put(317,23){\oval(10,20)[b]}

\put(320,23){$\bullet$}

\put(330,23){$\bullet$}
\end{picture}

On the other hand, the cup diagrams $C(w_i)$ for the weights $w_i$, $1\leq i\leq 6$ are as follows:\\

\begin{picture}(20,35)
\put(0,23){$\bullet$}\put(17,23){\oval(30,30)[b]}

\put(10,23){$\bullet$}\put(17,23){\oval(10,20)[b]}

\put(20,23){$\bullet$}\put(22,23)

\put(30,23){$\bullet$}\put(32,23)

\put(60,23){$\bullet$} \put(77,23){\oval(30,30)[b]}

\put(70,23){$\bullet$} \put(77,23){\oval(10,20)[b]}

\put(80,23){$\bullet$}

\put(90,23){$\bullet$}

\put(120,23){$\bullet$} \put(127,23){\oval(10,20)[b]}

\put(130,23){$\bullet$}

\put(140,23){$\bullet$} \put(147,23){\oval(10,20)[b]}

\put(150,23){$\bullet$}

\put(180,23){$\bullet$} \put(187,23){\oval(10,20)[b]}

\put(190,23){$\bullet$}

\put(200,23){$\bullet$} \put(207,23){\oval(10,20)[b]}

\put(210,23){$\bullet$}

\put(240,23){$\bullet$}\put(247,23){\oval(10,20)[b]}

\put(250,23){$\bullet$}

\put(260,23){$\bullet$} \put(267,23){\oval(10,20)[b]}

\put(270,23){$\bullet$}

\put(300,23){$\bullet$}\put(317,23){\oval(30,30)[b]}

\put(310,23){$\bullet$}\put(317,23){\oval(10,20)[b]}

\put(320,23){$\bullet$}

\put(330,23){$\bullet$}
\end{picture}

There are the two irreducible components from Example \ref{irredcpt}
\begin{align*}
\st{w_5}&=\{\Fl_0\subset \Fl_1\subset N^{-1}(\Fl_0)=\langle p_1,q_1\rangle\subset \Fl_3\subset N^{-1}(\Fl_2)=\mC^4\}\subset Y\\
\st{w_6}&=\{\Fl_0\subset \Fl_1\subset \Fl_2\subset N^{-1}(\Fl_1)\subset N^{-2}(\Fl_0)=\mC^4\}\subset Y
\end{align*}
and the additional stable manifolds
\begin{align*}
\st{w_4}&=\{\Fl_0\subset \langle p_1\rangle\subset \langle p_1,q_1\rangle\subset \Fl_3\subset\mC^4\}\subset \st{w_5},\\
\st{w_3}&=
\{\Fl_0\subset \Fl_1\subset\langle p_1,q_1\rangle\subset
\langle p_1,p_2,q_1\rangle\subset\mC^4\}\subset\st{w_5}\\
\st{w_2}&=
\{\Fl_0\subset \langle p_1\rangle\subset \Fl_2\subset
\langle p_1,p_2,q_1\rangle\subset\mC^4\}\subset\st{w_6},\\
\st{w_1}&=
\{\Fl_0\subset \langle p_1\rangle\subset \langle p_1,p_2\rangle\subset
\langle p_1,p_2,q_1\rangle\subset\mC^4\}=\{w_1\}\subset\st{w_6}.
\end{align*}
\end{ex}

\subsection{The cohomology of stable manifolds}
\label{sec:stable-manifolds}
The proof of Theorem~\ref{inclusions} with the result of Theorem~\ref{cohomcomp} enables us to calculate the cohomology
of the stable manifolds $\st w$. Let $\Sm \m(w)$ (resp.  $\si(\Sm \m(w))$) be the set of indices of vertices which are at the left (resp, right) end of a cup in
$\m(w)$ (i.e. those at which we have a free choice, and are not constrained to
match the fixed point).
\begin{theorem}
\label{cohomattract}
   The cohomology ring of $\st w$ has a natural presentation of the form
 \begin{equation*}
   H^\bullet(\st w;\C) \cong \C[\{x_i\}_{i\in\Sm \m(w)}]/(\{x_i^2\}_{i\in\Sm \m(w)})
 \end{equation*}
 with the surjective pullback map $i_w^*:H^\bullet(\flag)\to H^\bullet(\st w)$ given in this presentation by
 \begin{equation}
 \label{equ}
   i_S^*(x_i)=
   \begin{cases}
     x_i & i\in \Sm \m(w)\\
     -x_{\si^{-1}(i)} & i\in \si(\Sm \m(w))\\
     0 & \text{otherwise}
   \end{cases}
 \end{equation}
\end{theorem}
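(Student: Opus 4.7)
The approach mirrors Theorem~\ref{cohomcomp} and exploits the inclusion $\st w\subset\com{\St w}$ from Theorem~\ref{inclusions}. First I would identify the cups of $C(w)=\St w$ that are \emph{not} cups of $\m(w)$---the ``extra'' cups: they pair the leftover $\wedge$'s to the leftover $\vee$'s produced by the $\m(w)$-pairing algorithm, so their left endpoints form precisely the set $w_\wedge\cap\St w_\vee$.  By Theorem~\ref{inclusions}, $\st w$ is cut out inside $\com{\St w}$ exactly by the condition $\Fl_i=\fxi iw$ at each such left endpoint.

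Next I would realize $\st w$ as an iterated $\CP^1$-bundle of dimension $k_w$, with one $\CP^1$-factor per cup of $\m(w)$.  By \cite[Prop.~5.1]{Fung}, $\com{\St w}$ carries such a bundle structure indexed by the cups of $\St w$, the fiber coordinate at a cup being the $\CP^1$-worth of choices for $\Fl_i$ at its left endpoint.  Pinning $\Fl_i$ to a specific value at each extra cup collapses the corresponding $\CP^1$-fiber to a point while leaving the $\m(w)$-fibers untouched.  Alternatively, one may induct on $k_w$ using the map $q_i\colon\st w\to\st{w'}$ from the proof of Theorem~\ref{inclusions}, with base case $\st w=\{\fx w\}$ when $k_w=0$.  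Consequently $H^\bullet(\st w)$ is generated by the Chern classes $x_i=c_1(\LB_i)$ for $i\in\Sm\m(w)$, which are pullbacks from $\flag$, so $i_w^*$ is surjective.

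Finally, I would factor $i_w^*$ as $H^\bullet(\flag)\to H^\bullet(\com{\St w})\to H^\bullet(\st w)$ and apply Theorem~\ref{cohomcomp} to the first map.  The relations $x_i^2=0$ and $x_i+x_{\si(i)}=0$ for the cups of $\m(w)$ transfer directly.  For an extra cup, the combined constraints of $\com{\St w}$ and of $\st w$ force both $\Fl_i$ and $\Fl_{i-1}$ at its left endpoint to be $T$-fixed subspaces of $V$, making $\LB_i$ trivial and hence $x_i=0$ for $i\in w_\wedge\cap\St w_\vee$; the K-theoretic relation $x_i+x_j=0$ inherited from $\com{\St w}$ then forces $x_j=0$ at the right endpoint as well.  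Composing Theorem~\ref{cohomcomp}'s formula with this quotient yields (\ref{equ}), and a dimension count via the Bia\l ynicki--Birula paving of $\st w$ (whose $2^{k_w}$ cells correspond to the torus-fixed points contained in $\st w$, by the analogue of Lemma~\ref{fixed-points}) confirms that no further relations are missing.  The most delicate step is the assertion that both $\Fl_i$ and $\Fl_{i-1}$ are $T$-fixed at an extra cup's left endpoint; this follows from the cascading nature of the $\com{\St w}$-constraints, which force subspaces nestedly contained inside fixed ones to be fixed themselves, and is cleanest to verify inductively via the map $q_i$ above.
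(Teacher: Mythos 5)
Your proof is correct and follows the same route as the paper's (very terse) argument: surjectivity and the vanishing of $x_i$ away from the cups of $\m(w)$ are deduced from Theorem~\ref{inclusions}, the relation $x_{\si(i)}=-x_i$ from Theorem~\ref{cohomcomp}, and a dimension count ($2^{k_w}$, coming from the iterated $\CP^1$-bundle structure established in the proof of Theorem~\ref{inclusions}) rules out further relations. You supply considerably more detail than the paper, in particular on why $\LB_i$ trivializes at the left endpoint of each extra cup; the inductive verification via the maps $q_i$ that you sketch is indeed the cleanest way to nail down that step (the mechanism is that the component relations express $F_{i-1}$ as an $N$-preimage of an already-pinned or orphaned subspace, rather than mere containment in a fixed one).
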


\begin{proof}
Theorem~\ref{inclusions} implies that the map in question is surjective and gives the last case in \eqref{equ}. The second relation has to hold because of Theorem~\ref{inclusions} together with Theorem~\ref{cohomcomp}. Finally, the proof of Theorem~\ref{inclusions} implies that the dimension of $H^\bullet(\st w;\C)$ equals $2^a$, where $a$ is the number of cups in $\m(w)$, hence there are not more relations and the statement follows.
\end{proof}

\begin{ex}
In the situation of Example \ref{The wis} we have isomorphisms as follows:
\begin{equation*}
\begin{array}[tb]{lclclclclc}
H^\bullet(\st{w_1})&\cong&\mC,\\
H^\bullet(\st{w_2})&\cong&\mC[x_2]/(x_2^2)&\cong&\dn,\\
H^\bullet(\st{w_3})&\cong&\mC[x_1]/(x_1^2)&\cong&\dn,\\
H^\bullet(\st{w_4})&\cong&\mC[x_3]/(x_3^2)&\cong&\dn,\\
H^\bullet(\st{w_5})&\cong&\mC[x_1,x_3]/(x_1^2,x_3^2)&\cong&\dn\otimes\dn,\\
H^\bullet(\st{w_6})&\cong&\mC[x_1,x_2]/(x_1^2,x_2^2)&\cong&\dn\otimes\dn.
\end{array}
\end{equation*}
\end{ex}

\section{Pairwise intersections of stable manifolds}
\label{sec:pairw-inters-stable}

\subsection{Fixed points of intersections}
\label{sec:fixed-intersections}
The first step in understanding the structure of the intersection of
stable manifolds is to calculate the torus fixed points which lie in
the intersection.

Let $w$ and $w'$ be two row-strict tableaux of shape $(n-k, k)$ with associated
cup diagrams $C=\m(w)$ and $D=\m(w')$. Let $\overline{D}C$ be the diagram obtained by taking $D$, reflecting it in the
horizontal line containing the dots and putting it on top of the diagram $C$, identifying the points with the same label.
The result will be (up to homotopy) a collection of lines and circles.

\begin{definition*}
An {\bf orientation} of $\overline{D}C$ or
$\overline{C}D$ is a row strict tableau $v$ such that $vD$ and $vC$
are oriented.  In particular, this requires the weight sequence for $v$ to match the one for $w$ at any
unmatched points in $C$, and the one for $w'$ at any unmatched points in $D$.
\end{definition*}

\begin{lemma}
\label{combintersect}
  Let $\st w, \st {w'}$ be stable manifolds in $Y$ with associated cup
  diagrams $C$ and $D$. Then the number of fixed points contained in
  $\st w\cap \st {w'}$ equals the number of orientations of the
  diagram $\overline{D}C$.  In particular, the number of fixed points is either
\begin{itemize}
  \item  zero (if there is at least one line
  where the orientations required by orphaned points are incompatible),
  \item  one (if all lines are oriented and there are no circles),
  \item or $2^c$ (otherwise), where $c$ is the number of
  circles in $\overline{D}C$.
\end{itemize}
\end{lemma}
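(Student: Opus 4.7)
The plan is to set up a bijection between the torus fixed points of $\st w$ and the orientations of $\m(w)$, intersect this with the analogous bijection for $\st{w'}$ to conclude that fixed points in $\st w\cap\st{w'}$ correspond bijectively to orientations of $\overline{D}C$, and then count these combinatorially by a connected-component analysis.

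For the first part I would induct on the number $k_w$ of cups in $\m(w)$. In the base case $k_w=0$ the weight sequence of $w$ is of the form $\wedge\cdots\wedge\vee\cdots\vee$, and the last paragraph of the proof of Theorem~\ref{inclusions} shows $\st w=\{\fx w\}$; this matches the unique orientation of the ray-only diagram $\m(w)$. For the inductive step, pick a minimal cup $(i,i+1)$ of $\m(w)$ and let $w'$ be $w$ with positions $i,i+1$ removed. The map $q_i\colon\st w\to\st{w'}$ constructed in the proof of Theorem~\ref{inclusions} has $\mathbb{P}^1$-fibers corresponding to the choice of the line $\Fl_i/\Fl_{i-1}$ inside the two-dimensional quotient $\en^{-1}(\Fl_{i-1})/\Fl_{i-1}$; over a torus fixed point of the base, this quotient carries two distinct $T$-weights (one spanned by a $p$-vector, one by a $q$-vector), so the fiber contains exactly two torus fixed points, corresponding to the two possible orientations of the cup $(i,i+1)$. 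Applying the inductive hypothesis to $\st{w'}$ and tracking orientations through $q_i$ yields the desired bijection on fixed points, and hence $\fx v\in\st w\cap\st{w'}$ precisely when $v$ orients both $C=\m(w)$ and $D=\m(w')$, i.e.\ when $v$ is an orientation of $\overline{D}C$.

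To count orientations of $\overline{D}C$, decompose it into connected components, each either a circle or a line whose two endpoints are rays of $C$ or $D$. Each cup constrains its two endpoints to carry opposite decorations, so an orientation propagates alternately along a component, flipping at each cup crossed. On a line the endpoint decorations are already fixed by $w$ or $w'$; propagation either delivers the required decoration at the far endpoint (one valid orientation) or contradicts it (none). On a circle the cups alternate between $C$-cups and $D$-cups and are hence evenly many, so propagation is consistent and a free two-valued choice remains at any one vertex. Multiplying contributions over components yields $0$ when some line is incompatible, $1$ when every line is compatible and $c=0$, and $2^c$ otherwise.

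The main obstacle will be the inductive step of the first part: verifying that the two torus fixed points on each $\mathbb{P}^1$-fiber of $q_i$ correspond precisely to the two orientations of the removed minimal cup, while the remainder of the orientation data is identified bijectively with orientations of $\m(w')$. Once this bijection is in place, the component-by-component propagation argument on $\overline{D}C$ is routine.
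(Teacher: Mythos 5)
Your proposal is correct, and its second half (decomposing $\overline{D}C$ into components and propagating the alternating decoration: a factor $2$ per circle, a consistency check per line) is exactly the count the paper performs. Where you diverge is in how you obtain the key bijection between torus fixed points of $\st w$ and orientations of $\m(w)$: the paper simply invokes Lemma~\ref{fixed-points} (fixed points of a component $\com S$ are the $\fx v$ with $vC$ oriented, proved there by checking the defining relations of Proposition~\ref{defcom} at each fixed flag) together with Theorem~\ref{inclusions} to pass from components to stable manifolds, whereas you rebuild the bijection geometrically by induction on the number of cups, using the $T$-equivariant $\CP^1$-fibration $q_i$ and the observation that the fiber $\CP(\Fl_{i+1}/\Fl_{i-1})$ over a fixed point carries exactly two $T$-weights. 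Both routes are sound; yours is more self-contained and makes the geometric origin of the two orientations of a cup transparent, while the paper's is shorter because the work is already done elsewhere. One small point you should make explicit in the inductive step: the fiber genuinely has two \emph{distinct} weights because the cup relation $\Fl_{i+1}=\en^{-1}(\Fl_{i-1})$ with $\dim\Fl_{i+1}=i+1$ forces $\Fl_{i-1}\subseteq\im\en$, so at a fixed point $\Fl_{i+1}/\Fl_{i-1}$ is spanned by one $p$-vector and one $q$-vector; this is what rules out a degenerate fiber and guarantees exactly two fixed points on each $\CP^1$.
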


\begin{proof}
  By Lemma~\ref{fixed-points} the number of fixed points in the
  intersection of two irreducible components is the number of weight sequences which give rise to
  an orientation of $C$ and $D$ at the same time, and hence to an
  orientation of $\overline{D}C$. By Theorem \ref{inclusions} this is true more generally for intersections of two stable manifolds.
  For each circle there are exactly
  two such choices of an orientation and for each line there is a
  unique orientation. There is no orientation if the endpoints of some
  line are contained in the same cup diagram. The statement follows.
\end{proof}

\begin{corollary}\label{non-empty-intersection}
  The intersection $\st w\cap\st{w'}$ is
  \begin{itemize}
  \item non-empty if and only if
  $\overline{D}C$ has an orientation,
  \item a single point if and only
  if there is a unique such orientation.
  \end{itemize}
\end{corollary}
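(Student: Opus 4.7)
The plan is to bootstrap from the fixed-point count in Lemma~\ref{combintersect} using the fact that $\st w \cap \st{w'}$ is a closed, $\mathbb{C}^*$-invariant subvariety of the projective variety $\spf$. Closedness is clear from the definition $\st w = \overline{\st w^0}$, and $\mathbb{C}^*$-invariance follows because each attracting set $\st w^0$ is manifestly $\mathbb{C}^*$-invariant and $\mathbb{C}^*$ acts continuously on $\spf$, so the closures remain invariant. Since $\spf$ is projective, so is the intersection.

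For the first equivalence, I would invoke the standard fact that every non-empty projective $\mathbb{C}^*$-invariant variety contains a $\mathbb{C}^*$-fixed point (Borel's fixed point theorem, or concretely: pick any point, take the closure of its orbit, and read off the limit as $t\to 0$ or $t\to\infty$, which exists and is fixed). Hence $\st w \cap \st{w'}$ is non-empty if and only if it contains a torus fixed point, and by Lemma~\ref{combintersect} this holds if and only if $\overline{D}C$ admits an orientation.

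For the second equivalence, one direction is immediate: if the intersection consists of a single point, that point must be a fixed point, and by Lemma~\ref{combintersect} there is a unique orientation. For the converse, assume $\overline{D}C$ has a unique orientation, so that $\st w \cap \st{w'}$ contains exactly one fixed point $\fx v$. Suppose for contradiction that there is some $x \in \st w \cap \st{w'}$ with $x \neq \fx v$. The orbit $\mathbb{C}^* \cdot x$ and its closure both lie in $\st w \cap \st{w'}$. The closure is a projective curve; its two limit points $\lim_{t\to 0} t\cdot x$ and $\lim_{t\to\infty} t\cdot x$ are $\mathbb{C}^*$-fixed points of the intersection, and for a non-trivial orbit these two limits are distinct. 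This contradicts the uniqueness of $\fx v$, so the intersection reduces to $\{\fx v\}$.

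The argument is essentially formal once one recognizes that the intersection inherits $\mathbb{C}^*$-invariance and projectivity. The only point requiring care is the dichotomy for a $\mathbb{C}^*$-orbit closure in a projective variety (either a single fixed point, or a rational curve with two distinct fixed-point limits), which is classical but worth citing explicitly when writing up.
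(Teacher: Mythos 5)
Your argument is essentially identical to the paper's: the paper also observes that $\st w\cap\st{w'}$ is projective, applies Borel's fixed point theorem for the non-emptiness criterion, and for the single-point criterion notes that a non-fixed point $x$ yields two limits $\lim_{t\to 0}t\cdot x$ and $\lim_{t\to\infty}t\cdot x$ which are distinct fixed points (the paper justifies distinctness via their different moment map images, which is the same classical fact you cite about $\mathbb{C}^*$-orbit closures). The proposal is correct.
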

\begin{proof}
The intersection $\st w\cap\st{w'}$ is projective, and so it is either empty or has a fixed point
by Borel's fixed point theorem. Moreover, if $\st w\cap\st{w'}$ contains a point $x$ which is not a fixed
  point, then the limits $\lim_{t\to 0}t\cdot x$ and $\lim_{t\to
    \infty}t\cdot x$ exist and are different torus
  fixed points, since they have different moment map images.
\end{proof}

\begin{ex}
Using the cup diagram in Example~\ref{exattract} one easily obtains the following three sets
telling when the intersection $\st
{w_i}\cap\st{w_j}$ is empty, contains exactly one fixed point, or contains
exactly two fixed points respectively:
\begin{align*}
(i,j)&\in\{(1,3),(1,4),(1,5)\},\\
(i,j)&\in\{(1,2),(1,6),(2,3),(2,4),(2,5),(3,4),(3,6),(4,6)\}\\
(i,j)&\in\{(2,6),(3,5),(4,5),(5,6)\}.
\end{align*}
\end{ex}

\subsection{Structure of intersections}
To fully describe the structure of the intersections, we will require a bit
more machinery.  We first restate once more the condition for a flag
$\mathcal{F}_\bullet\in Y$ being contained in an irreducible component
$\com S$. We introduce an equivalence relation on the subspaces $F_i$
of $\mathcal{F}$ by grouping them into classes such that fixing one
element of a given class determines our choice of all the others.
Consider the cup diagram $C$ associated to $S$, and let $\sip(a)=\si(a+1)$.

\begin{definition}
  Let $i\re j$ (or more precisely $i\re_C j$) be the equivalence
  relation on the set $\{1,2,\ldots n\}$ obtained by taking the
  transitive closure of the reflexive and symmetric relations $i=j$,
  or $\sip(i)=j$ or $\sip(j)=i$ (when $\sip$ is defined). Note that
  the set of minimal representatives of the equivalence classes equals
  $\Sm S$.
\end{definition}

For all $i,j$ such that $\sip(i)=j$ or vice versa, we have
$\Fl_i=\en^{(j-i)/2}(\Fl_j)$ for any flag $\mathcal{F}\in \com S$. Since
this condition is transitive, we obtain that whenever $i\re j$, we
have $\Fl_i=\en^{(j-i)/2}(\Fl_j)$, and along with attaching a fixed
subspace to each orphaned vertex, this is a full set of relations for
$\com S$.

\begin{proposition}\label{cup-rel}
  If $b=\si(a)$, then $b\re a-1$ and $a\re b-1$.
\end{proposition}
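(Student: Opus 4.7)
The first relation $b \sim a-1$ is essentially tautological. By definition $\sip(a-1) = \si((a-1)+1) = \si(a) = b$, so whenever $a \geq 2$ we immediately have $a-1 \re b$ from the generating relations of $\re_C$. (If $a=1$, then $a-1=0$ falls outside the index set $\{1,\dots,n\}$ and the assertion is vacuous.)

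The substantive content is $a \sim b-1$. My plan is to exploit the planar combinatorics of the cup from $a$ to $b$. Since rays are not permitted to pass beneath any cup, every one of the $b-a-1$ points strictly between $a$ and $b$ must be an endpoint of a cup entirely nested inside $a \to b$. These nested cups are organized in a well-founded tree under nesting, and I focus on the immediate children of $a\to b$: the nested cups $c_1 \to d_1,\, c_2 \to d_2,\, \ldots,\, c_r \to d_r$ that have $a\to b$ as their direct parent (no intervening cup). Planarity and the absence of gaps then force $c_1 = a+1$, $c_{i+1} = d_i + 1$ for $1 \leq i < r$, and $d_r = b-1$.

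Once this partition is in hand, the desired equivalence is a straightforward chain. Using the defining relations of $\re_C$:
\begin{equation*}
\sip(a) = \si(a+1) = \si(c_1) = d_1,\qquad \sip(d_i) = \si(d_i+1) = \si(c_{i+1}) = d_{i+1}\ \text{for } 1\leq i<r.
\end{equation*}
Composing these yields $a \re d_1 \re d_2 \re \cdots \re d_r = b-1$, as required.

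The main obstacle here is not the chain argument itself but rather the clean justification of the underlying planar combinatorics, namely that the immediate children of $a\to b$ do partition the interval $(a,b)\cap \mathbb{Z}$ exactly, with no gaps and no orphaned points in between. This is a standard property of planar cup diagrams and follows from the no-crossing condition together with the ban on rays passing beneath cups; once this observation is made, the equivalences fall out directly from the definition of $\sip$.
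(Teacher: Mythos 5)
Your proof is correct and follows essentially the same route as the paper: the first relation is immediate from the definition of $\sip$, and for the second the paper likewise iterates $\sip$ from $a$ (your $d_i$ are exactly the paper's $\sip^i(a)$, i.e.\ the right endpoints of the immediate children of the cup $a\to b$) until reaching $b-1$, using planarity to guarantee the process is well defined and terminates. Your explicit naming of the children cups and the observation that they tile the interval $(a,b)$ is just a slightly more verbose packaging of the paper's ``$a<\sip(a)<\sip^2(a)<b$, so $b-1=\sip^{\ell}(a)$'' argument.
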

\begin{proof}
  The first relation is by definition.  To get the second, note that
  $\sip(a)<b$ (by the non-crossing condition), and either $\sip(a)=b-1$
  (in which case we obtain the desired equivalence), or
  $a<\sip(a)<\sip^2(a)<b$ (again, by non-crossing). Since there are
  finitely many indices between $a$ and $b$, we must have
  $b-1=\sip^\ell(a)$ for some $\ell$, and so $a\re b-1$.
\end{proof}

Now we introduce distinguished representatives for each equivalence class:
\begin{definition}
  Given two row strict tableaux $w$ and $w'$ with associated cup diagrams $C$ and $D$, we let $i\approx j$ (or more precisely
  $i\approx_{C,D}\! j$ or  $i\approx_{w,w'}\! j$) be the transitive closure of the relations of
  the form $i\re_Cj$ {\em and} those of the form $i\re_Dj$. We let
$\mE(C,D)$ or $\mE(w,w')$ denote the set of minimal representatives for $\approx_{C,D}$ with the
subset $\cE(C,D)=\cE(w,w')$ given by all points lying on a circle in $\overline{D}C$.
\end{definition}

\begin{ex}
The equivalence classes for our running example are
\begin{equation*}
\sim_{S(\nxt)}:\quad\{0,2,4\},\; \{1\}, \{3\},\hspace{1in}
\sim_{S(\nested)}:\quad\{0,4\},\;\{1,3\},\{2\}.
\end{equation*}
There are two equivalence classes for
$\approx_{{S(\nxt)},{S(\nested)}}$, namely $\{0,2,4\}$ and $\{1,3\}$. The set of minimal representatives are $$\mE({S(\nxt)},{S(\nested)})=\{0,1\}\quad\text{     and     }\quad\cE({S(\nxt)},{S(\nested)})=\{1\}.$$
\end{ex}

\begin{ex}
We denote by $S_1, S_2, \ldots, S_5$ the five standard tableaux of
Example~\ref{ex1}. The set of equivalence classes of $\sim_{S_i}$ are the
following:
\begin{align*}
\sim_{S_1}:\quad&\{\{0,4\},\{1,3\},\{2\},\{5\}\},
&\sim_{S_2}:\quad&\{\{0,2,4\},\{1\},\{3\},\{5\}\},\\
\sim_{S_3}:\quad&\{\{0,2\},\{1\},\{3,5\},\{4\}\},
&\sim_{S_4}:\quad&\{\{0\},\{1,3,5\},\{2\},\{4\}\},\\
\sim_{S_5}:\quad&\{\{0\},\{1,5\},\{2,4\},\{3\}\}
\end{align*}

Now the equivalence classes for $\approx_{{S_1},{S_4}}$ are for instance
$$\{\{0,4\},\{1,3,5\},\{2\}\}$$ with $\mE(S_1,S_4)=\{0,1,2\}$ and $\cE(S_1,S_4)=\{2\}$, since $1$ labels a point on
a line, whereas $2$ labels a point on a circle. The flags contained in $\com
{S_1}\cap \com {S_4}$ are exactly the flags in $Y$ of the form
\begin{equation*}
\{0\}\subset \im N^{2}\subset \Fl_2\subset N^{-1}(\Fl_1)\subset
N^{-2}(\{0\})\subset N^{-2}(\Fl_1)=\mC^5.
\end{equation*}
\end{ex}

\begin{theorem}
\label{intersectcomb}
   The set $\mE(D,C)$ of minimal representatives of the
  equivalence classes contains, apart from zero, exactly the left most points in either a circle
  or line of $\overline{D}C$.
\end{theorem}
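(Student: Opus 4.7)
The approach is to identify the $\approx_{C,D}$-equivalence classes on $\{0,\ldots,n\}$ with the faces of the planar graph $\overline{D}C$, viewed on the $2$-sphere by sending the ends of all rays to a common point at infinity. Each index $i$ will correspond to the baseline segment between positions $i$ and $i+1$ (extended to $\pm\infty$ for $i=0,n$), and the minimum index in each face will be shown to be either $0$ (for the unique face through infinity) or the leftmost position of the outermost circle or line bounding the face.

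For the face/class correspondence, I would show that each relation from Proposition~\ref{cup-rel} pairs up indices whose baseline segments lie in a common face. For a cup $(a,b)$ of $C$, the outer relation $a-1\sim b$ pairs the segments just to the left of position $a$ and just to the right of position $b$, both lying in the face adjacent to the cup on its exterior side; similarly $a\sim b-1$ pairs segments on the interior side, and the analogous statements hold for cups of $D$ above the baseline. Conversely, any two baseline segments in a common face can be connected by such pairings by tracing around the boundary of the face. As a consistency check, Euler's formula on the sphere applied to $\overline{D}C$ yields $1+(\#\text{circles})+(\#\text{lines})$ faces, matching the expected count of equivalence classes.

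Next, for each face $F$ I would identify its minimum index. The infinity face contains the segment for index $0$, so its minimum is $0$. For a bounded face $F$, let $L$ denote its outermost bounding circle or line and $p$ the leftmost position on $L$. Since $p$ is leftmost on $L$, both the $C$-arc and the $D$-arc at $p$ extend rightward to positions $>p$, so the baseline segment just to the right of position $p$---labelled by index $p$---is enclosed by $L$ and lies in $F$. No index $<p$ lies in $F$, since all arcs of $L$ sit at $x$-coordinates $\geq p$, making the half-plane $x<p$ entirely exterior to $L$. Finally, the assignment $L\mapsto F_L$ (face immediately inside $L$) is a bijection between circles/lines and bounded faces, giving $\mE(D,C)=\{0\}\cup\{\text{leftmost positions of circles and lines of }\overline{D}C\}$.

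The main obstacle is rigorously establishing the face/class correspondence in the first step. While the pairing picture for individual cup relations is transparent, the converse direction---that any two same-face baseline segments are $\approx_{C,D}$-equivalent---requires tracing the boundary of each face and verifying that the successive arcs encountered produce a valid chain of $\sim_C$ and $\sim_D$ steps. This splits into local cases at each position $p$ according to whether the meeting $C$- and $D$-arcs are cups, caps, or rays; in each case the local pairing of boundary endpoints can be read off directly from the definitions. With the counts matched by Euler's formula, this local analysis suffices to pin down the bijection, after which the remaining steps follow cleanly.
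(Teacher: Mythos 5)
Your approach is correct in substance but genuinely different from the paper's. The paper stays combinatorial: it iterates Proposition~\ref{cup-rel} along the arcs of a component to show that every vertex label on a circle is equivalent either to its leftmost point $p$ or to $p-1$, and then disposes of $p-1$ by induction on nesting. You instead translate each index into a gap of the baseline and identify the $\approx_{C,D}$-classes with the faces of the planar arrangement $\overline{D}C$; this is more conceptual, and once the identification is made the location of the minimal representative of each class is essentially forced. The step you flag as the main obstacle is real, but it closes most cleanly if you do not literally trace face boundaries. Observe instead that the $\re_C$-classes are exactly the traces on the baseline of the connected components of the lower half-plane minus the cups and rays of $C$: the region immediately inside a cup $(a,b)$ meets the baseline in the gaps $a,\sip(a),\dots,b-1$, which is precisely a chain of generating relations, and likewise for the unbounded regions cut out by the rays. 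The same holds for $D$ above the line, and a face of $\overline{D}C$ is a connected union of such lower and upper half-faces glued along common gaps, so connectedness forces all of its gaps into a single $\approx_{C,D}$-class. The reason to prefer this over a boundary walk is that a face may have several boundary components (the annulus between nested circles, or the outer face of two side-by-side circles), and walking a single component only links the gaps adjacent to that component; the chains coming from different components must still be spliced through shared gaps, which is exactly what the half-face decomposition does automatically.

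Two small inaccuracies to repair in the write-up, neither fatal. First, when rays are present the point at infinity lies \emph{on} the curves, so there is no face ``through infinity''; but the face containing the gap $0$ trivially has minimum $0$, which is all you need. Second, at the leftmost point $p$ of a line one or both of the incident arcs may be rays rather than rightward cups or caps; the conclusion survives because $L$ is still contained in the half-plane $x\ge p$ together with the point at infinity, so every gap of index $<p$ lies on the outer (index-$0$) side of $L$, while the two arcs at $p$ locally separate the gap $p$ from the gap $p-1$ and hence place $p$ on the inner side.
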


\begin{proof}
  Indeed, let $a,b,c$ be the labels of three points in $\overline{D}C$
  such that $a$ and $b$ are connected via a cup and $b$ and $c$ via a
  cap. By Proposition~\ref{cup-rel}, we have $c\re_D b-1\re_C a$, so
  $c\approx_{C,D}a$.

  Repeating this argument implies the following: If two points on a
  circle in $\overline{D}C$ are joined by a path with an even
  number of arcs, then they are equivalent.  Thus all indices on any
  circle are equivalent either to its leftmost point $p$, or to a
  point adjacent to $p$ by a single arc.  Applying
  Proposition~\ref{cup-rel} again, this shows that each point in the
  circle is equivalent to $p$ or $p-1$, the latter of which must be
  equivalent to the leftmost point in another circle or to $0$, by
  induction.
\end{proof}

We note that the set $\mE(D,C)$ can be equipped with a partial order
defined by $a\geq b$ if the circle $a$ lies on is nested inside that
$b$ lies on. Hence outer circles are minimal in this ordering.
 This poset has a natural rank function
$r:\mE(D,C)\to\mZ$ given by 0 on all lines, 1 on all circles not
nested inside any other, and thereafter increasing with the depth of
nesting.  Recall that a flag indexed by a ranked poset is a map of
ranked posets from that poset to the ranked poset of subspaces of a
given vector space.

The equivalence relation $\approx$ allows us to prove Conjecture 7.1 of \cite{Fung}:

\begin{theorem}
\label{intersection-fiber}
    The variety $\st w\cap\st {w'}$ is canonically isomorphic to the space of flags indexed by the ranked poset $\mE(D,C)$ invariant under $N$. In particular,
  \begin{enumerate}
\item  $\st w\cap\st {w'}$ is an iterated fiber
  bundle of base type $(\CP^1,\CP^1,\ldots, \CP^1)$ where the numbers
  of terms is the number $c$ of closed circles in $\overline{D}C$ (if
  $c=0$, the intersection is a point),
\item $\st w\cap\st {w'}$ is smooth, and
\item $H^\bullet(\st w\cap\st {w'})\cong
  \dn^{\otimes c}$ as graded vector spaces.
\end{enumerate}
\end{theorem}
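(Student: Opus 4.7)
The plan is to reduce the description of $\st w \cap \st{w'}$ to a flag indexed only by the minimal representatives $\mE(D,C)$ using the combined equivalence $\approx_{C,D}$, then recognize the result as an iterated $\CP^1$-bundle whose fibers are indexed by the circle-representatives $\cE(C,D)$, and finally read off the cohomology by an inductive Leray--Hirsch argument. Combining Theorem~\ref{inclusions} for $\st w$ and for $\st{w'}$, the intersection is cut out inside $Y$ by all relations $\Fl_i = \en^{(j-i)/2}(\Fl_j)$ arising from either $\re_C$ or $\re_D$, together with the fixed-point constraints $\Fl_i = \fxi i w$ for $i \in w_\wedge \cap \St w_\vee$ and analogously for $w'$. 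By Lemma~\ref{combintersect} these fixed-point constraints are compatible wherever both apply, and consequently any flag in the intersection is determined by its values at indices $i \in \mE(D,C)$, with $\en$-invariance of the reconstructed full flag automatic from the $\approx$-relations.

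Next, using Theorem~\ref{intersectcomb} I split $\mE(D,C)$ into line-representatives and the circle-representatives $\cE(C,D)$. For a line-index $i$, following $\approx$ along the line reaches a vertex at which the orphaned/fixed-point condition applies on one of the two sides, so $\Fl_i$ is forced to equal the corresponding coordinate subspace of $\fx w$ (equivalently $\fx{w'}$); hence the only truly free data are the subspaces $\Fl_p$ for $p \in \cE(C,D)$. To recover the iterated bundle structure, I order $\cE(C,D)$ by the nesting rank $r$ and choose the $\Fl_p$ inductively from outermost circle inward. At the step where $p$ is the leftmost point of a circle $\gamma$, the data already fixed at circles of smaller rank and at all line-indices determines both a subspace $\Fl_{p'}$ of dimension $p-1$ (with $p'$ the immediate predecessor of $p$ in $\mE(D,C)$) and an ambient subspace of dimension $p+1$; the two-dimensional quotient of these is naturally spanned by one $T$-weight vector from $P$ and one from $Q$. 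The choice of $\Fl_p/\Fl_{p'}$ then ranges over the $\CP^1$ of lines in this quotient, with the two $T$-fixed points corresponding to the two orientations of $\gamma$ predicted by Lemma~\ref{combintersect}. This exhibits $\st w \cap \st{w'}$ as an iterated $\CP^1$-bundle with $c = |\cE(C,D)|$ stages, giving (1) and (2).

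For (3) I apply Leray--Hirsch inductively up this tower, using the first Chern classes $x_p = c_1(\LB_p)$ for $p \in \cE(C,D)$ as fiber generators. Each satisfies $x_p^2 = 0$ by the argument of Theorem~\ref{cohomcomp} applied to the minimal cup through $p$ in either $C$ or $D$, so the resulting ring is a quotient of $\dn^{\otimes c}$. Its total dimension is at least $2^c$ by Lemma~\ref{combintersect} and the Bia\l ynicki--Birula decomposition of $\st w \cap \st{w'}$ inherited from $Y$, so there are no further relations and $H^\bullet(\st w \cap \st{w'}) \cong \dn^{\otimes c}$ as graded vector spaces.

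The main obstacle is the middle step: verifying that at each circle-representative the relevant successive quotient really is two-dimensional and spanned by two weight vectors of opposite $T$-types. This requires careful attention to how alternating cup/cap arcs of $\gamma$ (belonging to $C$ versus $D$) interact with $\en$, and relies crucially on the combinatorial identification in Theorem~\ref{intersectcomb} that $\approx_{C,D}$-equivalence classes correspond to the connected components of $\overline{D}C$. Once this geometric claim is established at each circle, the iterated-bundle statement follows by induction on rank and the cohomology ring calculation proceeds routinely.
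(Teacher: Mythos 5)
Your strategy is the same as the paper's: reduce $\st w\cap\st{w'}$ to the space of $\en$-invariant flags indexed by the ranked poset $\mE(D,C)$ (line-indices pinned to the coordinate subspaces of the fixed points, circle-representatives carrying the free choices), then peel off one circle at a time by nesting rank to exhibit an iterated $\CP^1$-bundle; your Leray--Hirsch argument for (3) is also fine and is exactly the routine part the paper leaves to the reader. The substantive issue is that the identification of the $\CP^1$-fiber at each circle --- the step you yourself defer as ``the main obstacle'' --- is the entire content of the paper's proof, and the form in which you state it is false. At an inner circle-representative $p$, the subspace $\Fl_{p-1}$ depends on the free choices already made on the outer circles and is therefore in general \emph{not} $T$-invariant; consequently the two-dimensional quotient $\en^{-1}(\Fl_{p-1})/\Fl_{p-1}$ is \emph{not} spanned by one weight vector from $P$ and one from $Q$, and its two ``orientations'' are not $T$-fixed points of the fiber. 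Already for $Y_{\nested}$ (two nested cups, $n=4$): over $\Fl_1=\langle p_1+q_1\rangle$ the quotient $\en^{-1}(\Fl_1)/\Fl_1$ is spanned by the classes of $p_1$ and $p_2+q_2$, and the only weight vectors it contains are the multiples of the single class $[p_1]=-[q_1]$.

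What is actually needed, and what the paper proves, is only a dimension count. The paper introduces the subspaces $G_a=N^{(a-r(a)+\ell(a))/2}(\Fl_a)/\Fl_{\ell(a)}$ of dimension $r(a)$, uses Proposition~\ref{cup-rel} together with the structure of $\approx_{C,D}$ to verify that $a\mapsto G_a$ is a flag over the poset and that, conversely, any such flag reconstructs a point of the intersection --- this converse is also missing from your write-up: you must check that the subspace chosen at $p$ is compatible with \emph{all} the cup and cap relations around the circle, not only the two arcs ending at $p$. The fiber of forgetting a maximal element $a$ is then $\CP(N^{-1}(G_{a'})/G_{a'})$ for $a'$ the element it covers, and this is a $\CP^1$ simply because $G_{a'}\subset\im N$ and $\dim\ker N=2$. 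If you replace the weight-vector claim by this dimension argument and supply the converse direction of the bijection, your proof closes and coincides with the paper's.
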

\begin{proof}
The consequences are clear, hence we only prove the first statement.
  Since any comparable circles are on the same side of each line, we
  can divide our poset into subsets consisting of the circles between
  any adjacent lines.  The space of flags indexed by this sub-poset in
  $V$ is isomorphic to space of such flags in $V/\Fl_{\ell(a)}$, where
  for $a\in \mE(D,C)$, we let $\ell(a)$ be the left-most point on the
  right-most line that $a$ lies on the right side of, and thus our
  claim is that our intersection is isomorphic to the product of these
  spaces of flags.

  Consider the subspaces
  $G_a=N^{(a-r(a)+\ell(a))/2}(\Fl_a)/\Fl_{\ell(a)}$.  This is a
  subspace of $V/\Fl_{\ell(a)}$ of dimension $r(a)$.

  If $a\geq b$, and $r(a)=r(b)+1$, then we have $a-1 \approx b$, since
  either $a-1=b$, or $a-1$ lies on a circle with leftmost point $a'$.
  Since $a'\geq b$, we have $r(a') > r(b)$, so $a\ngeq a'$.  Thus, we
  have $a-1\approx a'-1$, and by induction, our claim follows.  Thus
  $G_a\supset N^{(a-r(a)+\ell(a))/2}(\Fl_{a-1})/\Fl_{\ell(a)}=G_b$,
  since
  \begin{equation*}
    a-r(a)=((a-1)-b)+(b-r(b))
  \end{equation*}
  and $\ell(a)=\ell(b)$.

  By induction, this establishes that $G_a$ is indeed a flag over our
  poset.

  Conversely, we can define an element of our intersection, given such
  a flag, by defining $\Fl_i$ by
  $N^{-(i-r(i'))/2}(G_{i'}+\Fl_{\ell(i)})$ where $i'$ is the
  representative of $i$ in $\mE(D,C)$.

  This variety is an iterated $\CP^1$-bundle, since forgetting the
  vector space attached to a maximal element $a$ obviously defines a
  map to the set of flags indexed by a poset with this point removed.
  This map is surjective, since the interval below $a$ is a chain, so
  the space attached to it can be chosen in increasing order. On the
  other hand, the fiber of this map is $\CP(N^{-1}(G_{a'})/G_{a'})$
  for $a'$ the unique element that $a$ covers in this poset (the
  circle immediately containing it).  This is a $\CP^1$, since
  $G_{a'}\subset \im N$, for any $a'\neq a$ for simple dimensional
  reasons (we must have $r(a') < r(a) \leq k$ since no diagram can
  have have more than $k$ circles, and thus no more than $1$ of rank
  $k$). This is thus a general result for flags indexed by any poset
  where all intervals are chains, and the rank is bounded by $k$.
\end{proof}
This theorem has a natural generalization to intersections of
arbitrary finitely many numbers of $\st {w_i}$, given by a rank function on the set
of equivalence classes of the relation generated by $\sim_{w_i}$ for all
$i$.  Let $\mE(w_1,\cdots, w_n)$ be the set of minimal elements of
these equivalence classes.  This can be defined inductively by the following rule:
\begin{itemize}
\item If an equivalence class contains a line, $r(i)=0$.
\item If $i\in\mE(w_1,\ldots,w_n)$, and $j\in\mE(w_1,\ldots,w_n)$ is
  the minimal representative of $i-1$, then $$r(i)=
  \begin{cases}
    r(i-1)+1 & \text{if $i-1\equiv j\pmod 2$}\\
    r(i-1)   & \text{if $i-1\not\equiv j\pmod 2$}
  \end{cases}
$$
\end{itemize}

This rank function will be of great importance in the next section.

\subsection{The cohomology of pairwise intersections as bimodules}
\label{sec:cohom-pairw-inters}

Theorem~\ref{intersection-fiber} enables us to calculate the cohomology $H^\bullet(\st w\cap\st {w'})$ of the
intersection of two stable manifolds as a module over the cohomology of
$H^\bullet(\flag)$, and thus as a $(H^\bullet(\st w),H^\bullet(\st {w'}))$-bimodule.

For any $1\leq i,j\leq n$ we set $\ep(i,j)=0$ if $i$ and $j$ are not on the same circle in $\overline{\m(w)}\m(w')$, and  $\ep(i,j)_{w,w'}=\ep(i,j)=(-1)^{a}$ if $i$ and $j$ lie on the same circle with $a$ being the number of arcs in a path between them. Note that, although $a$ depends on the chosen path, the number $(-1)^a$ does not.
\begin{theorem}
\label{intersections}
Assume the intersection $\st w\cap\st {w'}$ is non-empty. Then the cohomology ring $H^\bullet(\st w\cap\st {w'})$ has the  presentation
  \begin{equation}\label{intersection-equality}
    H^\bullet(\st w\cap\st {w'})=\C\left[\{x_i\}\right]/(\{x_i^2\}),
  \end{equation}
  where the index $i$ runs through $\cE(\m(w),\m(w'))$. The pullback map $$i_{w,{w'}}^*:H^\bullet(\flag)\to H^\bullet(\st
  w\cap\st{w'})$$ is surjective and given by
  \begin{equation*}
  i_{w,{w'}}^*(x_i)=\sum_{j\in\cE(w,w')}\epsilon(i,j)x_j
  \end{equation*}
  In particular, the image of $x_i$ is zero if and only if $i$ does not lie on a closed circle.
\end{theorem}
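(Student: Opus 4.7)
The plan is to combine the explicit presentations of $H^\bullet(\st w)$ and $H^\bullet(\st {w'})$ from Theorem~\ref{cohomattract} with the dimension formula of Theorem~\ref{intersection-fiber}(3), sharpened by a sign bookkeeping along the circles of the combined diagram $\overline{\m(w')}\m(w)$. First I would verify that the pullback $H^\bullet(\flag)\to H^\bullet(\st w\cap\st {w'})$ is surjective: since $\st w\cap\st {w'}$ is an iterated $\CP^1$-bundle by Theorem~\ref{intersection-fiber}(1), whose $\CP^1$-fibers come from restricting the tautological line bundles $\LB_i$ from $\flag$, the same Leray--Hirsch argument as in the proof of Theorem~\ref{cohomcomp} gives surjectivity. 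Moreover, the inclusion $\st w\cap\st {w'}\hookrightarrow \flag$ factors through both $\st w$ and $\st {w'}$, so all the relations from Theorem~\ref{cohomattract} applied to each of $w$ and $w'$ hold simultaneously in $H^\bullet(\st w\cap\st {w'})$.

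The central step is to translate these simultaneous relations into the combined diagram. Each cup of $\m(w)$ connecting $i$ and $\si(i)$ yields $x_i+x_{\si(i)}=0$, each cap of $\m(w')$ gives the analogous relation, and if $i$ is orphaned in either $\m(w)$ or $\m(w')$ then $x_i=0$. Hence traversing a single arc of $\overline{\m(w')}\m(w)$ introduces a factor $-1$. Starting at an arbitrary vertex $i$ and walking along its connected component, either $i$ lies on a line and the path terminates at an orphaned vertex, forcing $x_i=0$; or $i$ lies on a circle and after some number $a$ of arcs one reaches its leftmost vertex $j\in\cE(w,w')$ (by Theorem~\ref{intersectcomb}), giving $x_i=(-1)^a x_j=\epsilon(i,j)x_j$. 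The parity of $a$ is independent of the chosen path, since each circle in $\overline{\m(w')}\m(w)$ contains an even number of arcs (they alternate between the two matchings), so any two paths from $i$ to $j$ on the circle have arc counts of equal parity; this legitimises $\epsilon(i,j)$ and produces the claimed formula for $i_{w,w'}^*$.

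Finally I would establish $x_j^2=0$ for each $j\in\cE(w,w')$ and close by a dimension count. Since $j$ is the leftmost vertex of its circle, both arcs at $j$ extend to the right, so in particular $j$ is the left endpoint of the cup at $j$ in $\m(w)$ (equivalently $j\in\Sm\m(w)$), and the quadratic relation $x_j^2=0$ in $H^\bullet(\st w)$ from Theorem~\ref{cohomattract} descends to the intersection. Thus the image of the pullback is a quotient of $\C[\{x_j\}_{j\in\cE(w,w')}]/(\{x_j^2\})$, which has $\C$-dimension $2^c$ where $c=|\cE(w,w')|$; by Theorem~\ref{intersection-fiber}(3) this equals $\dim_\C H^\bullet(\st w\cap\st {w'})$, so the surjection is an isomorphism and no further relations arise. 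The main obstacle I anticipate is the sign bookkeeping for $\epsilon(i,j)$, but it is entirely controlled by the parity argument above; the remaining steps are direct consequences of the cited results.
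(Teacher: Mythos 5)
Your proof is correct and follows essentially the same route as the paper: relations pulled back from Theorem~\ref{cohomattract} for both $w$ and $w'$, surjectivity from the iterated $\CP^1$-bundle structure of Theorem~\ref{intersection-fiber}, and a dimension count against $2^c$ to rule out further relations. The only difference is that you make explicit the arc-traversal sign bookkeeping identifying the quotient $H^\bullet(\flag)/(\ker i_w^*+\ker i_{w'}^*)$ with $\C[\{x_j\}_{j\in\cE(w,w')}]/(\{x_j^2\})$, a computation the paper leaves implicit.
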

\begin{proof}
  By Theorem~\ref{cohomattract} we know in particular
\begin{math}
\ker i_{w,{w'}}^*\supseteq\ker i_w^*+\ker i_{w'}^*.
\end{math}
Hence there is a well-defined map
\begin{equation*}
\psi:\quad H^\bullet(\flag)/(\ker i_w^*+\ker i_{w'}^*)\to H^\bullet(\st w\cap\st w').
\end{equation*}
By Theorem~\ref{intersection-fiber}, $\psi$ is surjective since the cohomology
of the intersection is generated in degree two. Comparing dimensions (Theorem~\ref{cohomattract} provides the dimension of the left hand side whereas Corollary~\ref{intersection-fiber} gives the dimension of the right hand side), we see $\psi$ must be an
isomorphism.
\end{proof}

\begin{ex} The only interesting cases for $\st {w_i}\cap\st {w_j}$ where $i\not=j$ (notation as in Example~\ref{The wis}) are
\begin{align*}
H^\bullet(\st {w_2}\cap\st {w_6})\cong\mC[x_2]/(x_2^2),&& H^\bullet(\st {w_3}\cap\st
{w_5})\cong\mC[x_1]/(x_1^2), \\
H^\bullet(\st {w_4}\cap\st {w_5})\cong\mC[x_3]/(x_3^2),&& H^\bullet(\st {w_5}\cap\st
{w_6})\cong\mC[x_1]/(x_1^2),
\end{align*}
since in all other cases where the intersection is non-trivial we get $\mC$.
\end{ex}

Similar bimodules have appeared previously: first in work of Khovanov
\cite{KhoJones}, \cite{Khotangles} in the case $2k=n$, for pairs of
standard tableaux; then in the general case in work of the first
author \cite{StrSpringer} and \cite{BS1}. Our construction agrees with
the the latter two, and so the cohomology rings of stable manifolds
$\st w$ are naturally isomorphic to the endomorphism ring of the
indecomposable projective module corresponding to $\m(w)$ for the
algebra denoted $\ST^{n-k,k}$ of \cite{StrSpringer}, \cite{BS1}. The
category of modules over this algebra is equivalent to the category of
perverse sheaves on the Grassmannian of $k$-planes in $\mC^n$ (see
\cite{StrSpringer}) and related to the representation theory (the
so-called category $\cO$) of the general Lie algebra
$\mathfrak{gl}(n,\mC)$.

\subsection{Background from category $\cO$}
\label{sec:category-O}

Let us briefly recall the construction of \cite{StrSpringer} and the connection to (parabolic) category $\cO$.  For details on and properties of category $\cO$ and its parabolic version see for example \cite{BGG} and \cite{RC}, or the recent book \cite[Chapter 9]{HumphreysO}).\\

The symmetric group $S_n$ acts (from the right) on the set $W(n-k,k)$
of weight diagrams with $n-k$ $\wedge$'s and $k$ $\vee$'s by
permutation. The stabilizer of the weight
$w_{dom}=\wedge\wedge\ldots\wedge\vee\ldots\vee$ is the Young subgroup
$S_{n-k}\times S_k$ of $S_n$. Hence we get a bijection between the set
$W^{n-k,k}$ of shortest coset representatives $S_{n-k}\times
S_k\backslash S_n$ and the set $W(n-k,k)$ under which $w_{dom}$
corresponds to the identity element in $S_n$. On the other hand, the
set $W^{n-k,k}$ labels in a natural way also the simple modules in the
principal block $\cO^{n-k,k}_0$ of the parabolic category
$\cO^{n-k,k}$ for the Lie algebra $\mathfrak{gl}(n,\mC)$.

These simple modules are exactly the simple highest weight modules $L(x\cdot0)$
in the principal block of $\cO$ for $\mathfrak{gl}(n,\mC))$ which are locally finite with respect to the
parabolic $\mathfrak{p}=\mb+\mathfrak{l}$, where $\mb$ is the standard
Borel given by upper triangular matrices and
$\mathfrak{l}\cong\mathfrak{gl}({n-k},\mC)\times\mathfrak{gl}({k},\mC)$
is the subalgebra of $\mathfrak{gl}(n,\mC)$ given by all
$(n-k,k)$-block matrices. Let $P(x\cdot0)\in\cO^{n-k,k}_0$ be the
(indecomposable) projective cover of $L(x\cdot0)$. We have now set up
a bijection between the indecomposable modules $P(x\cdot0)$ and the
stable manifolds $\st w$ by mapping $P(x\cdot0)$ to its weight diagram
in $W(n-k,k)$ which then in turn is associated with some row strict
tableau $w=w(x)$ determining the stable manifold $\st w=\st
{w(x)}$. Let $\operatorname{Cup}(x)$ be the corresponding cup diagram.\\

The endomorphism algebra $\ST^{n-k,k}$ of a minimal projective
generator $\bigoplus_x P(x\cdot0)$ in $\cO^{n-k,k}_0$ has the
following description: Let $x$, $y\in W^{n-k,k}$. Then
$\Hom_\cO(P(x\cdot0),P(y\cdot0))=\{0\}$ in case the diagram
$\overline{\operatorname{Cup}(x)}\operatorname{Cup}(y)$ cannot be
oriented. Otherwise there is an isomorphism of vector spaces
 \begin{equation*}
 \Hom_\cO(P(x\cdot0),P(y\cdot0))=\dn^{\otimes c(x,y)},
 \end{equation*}
 where $c(x,y)$ is the number of circles in the diagram $\overline{\operatorname{Cup}(x)}\operatorname{Cup}(y)$ (with $\dn^{\otimes0}=\C$ by definition). In particular, thanks to Theorem~\ref{intersection-fiber},
 $$\Hom_\cO(P(x\cdot0),P(y\cdot0))\cong H^\bullet(\st {w(x)}\cap \st {w(y)})$$ as vector spaces.

 The endomorphism algebra $\ST^{n-k,k}$ can be equipped with a Koszul
 grading (\cite{BGS} or \cite[Theorem 5.6]{BS2} and \cite[Theorem 1.1]{BS3} for a more elementary proof).
 Let $\tilde{P}(x\cdot0)$ be the standard
 graded lift of $P(x\cdot0)$. This is a graded $\ST^{n-k,k}$-module
 whose head is concentrated in degree zero and which is isomorphic to
 $P(x\cdot0)$ after forgetting the grading. Since $P(x\cdot0)$ is
 indecomposable, such a standard graded lift is unique up to
 isomorphism (\cite[Lemma 2.5.3]{BGS}). Then the space
 $\Hom_{\ST^{n-k,k}}(\tilde P(x\cdot0),\tilde P(y\cdot0))$ is a graded
 vector space isomorphic to
 \begin{equation}
 \label{dshift}
 H^\bullet(\st {w(x)}\cap \st {w(y)})\langle d(x,y)\rangle,
 \end{equation}
 where $d(x,y)=k-c(x,y)$. (Since $c(x,y)$ is the dimension of $\st {w(x)}\cap \st {w(y)}$, the shift encodes its codimension in a Lagrangian in which it is contained.)
 In particular, $\End_{\ST^{n-k,k}}(\tilde P(x\cdot0))\cong H^\bullet(\st {w(x)})$. The multiplication in $\ST^{n-k,k}$ was defined using a TQFT-procedure generalizing Khovanov's (see \cite{StrSpringer}, \cite{BS1}, \cite{KhoJones}). From the definitions it follows in particular, $$\End_{\ST^{n-k,k}}(\tilde P(x\cdot0))\cong H^\bullet(\st {w(x)})$$ as graded algebras.\\

\begin{remark}{\rm
There is an alternative description of the connection between category
$\cO$ and the geometry of the Slodowy slice.  By work of the second
author in \cite{WebWO} and the localization theorem of Ginzburg
\cite{Gin08} and Dodd-Kremnitzer \cite[Theorem 7.4]{DK}, a singular
block $\cO_{n-k,k}$ of category $\cO$ with singularity type $(n-k,k)$
is equivalent to a subcategory of a category of sheaves for a
quantization of the structure sheaf on the Slodowy slice. These
sheaves are easily seen to be supported inside the closure of the set
of points attracted to the Springer fiber under a particular
$\C^*$-action.

The connection to our picture is related to the already mentioned
Koszul duality. Recall that the endomorphism algebra of a minimal
projective generator of the parabolic category $\cO^{n-k,k}$, turns
into the Ext-algebra of simples in the singular block $\cO_{n-k,k}$.
Thus, the two-fold appearance (in the context of the geometry of the
Springer) of the algebra studied in our paper is not
surprising. However, the precise connection between this
representation theoretic quantization construction and the
constructions of this paper has not been worked out yet.  }
\end{remark}

\subsection{An isomorphism of bimodules}
In \cite[Conjecture 5.9.2]{StrSpringer}, it was conjectured that for any two standard tableaux $S$ and $S'$, the cohomology  $H^\bullet(\com S\cap\com
{S'})$ is isomorphic, as a bimodule (with the above identifications), to the
Hom-space between the corresponding indecomposable projective modules over the
algebra $\ST^{n-k,k}$. We have the following more general result:

\begin{theorem}
\label{bimodules}
There are isomorphisms of graded algebras
\begin{equation}
\label{ringisos}
\Psi_x:\End_{\ST^{n-k,k}}(\tilde P(x\cdot0))\cong H^\bullet(\st {w(x)}),\quad x\in W^{n-k,k}
\end{equation}
such that under these identifications one can find isomorphism of graded bimodules
\begin{equation*}
\Psi_{x,y}:\Hom_{\ST^{n-k,k}}(\tilde P(x\cdot0),\tilde P(y\cdot0))\cong H^\bullet(\st {w(x)}\cap \st {w(y)})\langle d(x,y)\rangle
\end{equation*}
for any $x, y\in W^{n-k,k}$.
\end{theorem}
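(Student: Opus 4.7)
The plan is to construct $\Psi_x$ and $\Psi_{x,y}$ directly from the presentations on the two sides, and then verify equivariance of the bimodule actions. On the algebraic side, \cite{StrSpringer,BS1} present $\End_{\ST^{n-k,k}}(\tilde P(x\cdot 0))\cong\dn^{\otimes k_x}$ with one degree-two generator per cup of $\m(w(x))$, where $k_x$ is the number of cups. Theorem~\ref{cohomattract} gives the same presentation of $H^\bullet(\st{w(x)})$ with generators $x_i$ indexed by $\Sm\m(w(x))$. Define $\Psi_x$ by sending the algebraic generator on the cup $(i,\si(i))$ to $x_i$; this is tautologically a graded ring isomorphism. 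For the Hom-spaces, \cite{BS1} identifies $\Hom_{\ST^{n-k,k}}(\tilde P(x\cdot 0),\tilde P(y\cdot 0))\cong \dn^{\otimes c(x,y)}\langle d(x,y)\rangle$ with one generator per circle of $\overline{\m(w(y))}\m(w(x))$, while Theorems~\ref{intersectcomb} and~\ref{intersections} present $H^\bullet(\st{w(x)}\cap\st{w(y)})\langle d(x,y)\rangle$ with generators $x_j$ for $j\in\cE(w(x),w(y))$ the leftmost circle vertices. Define $\Psi_{x,y}$ as the graded vector-space isomorphism matching each circle to its leftmost vertex; the dimensions agree by Theorem~\ref{intersection-fiber}.

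The main content is then verifying that $\Psi_{x,y}$ intertwines the two bimodule structures. Both structures factor through a natural $H^\bullet(\flag)$-action: geometrically via pullback along $\st{w(x)}\cap\st{w(y)}\hookrightarrow\flag$, algebraically via the coinvariant algebra acting as the center of category $\cO_0$ on Hom-spaces of projectives. Since both sides are cyclic over $H^\bullet(\flag)$ (for the geometric side by the surjectivity in Theorem~\ref{intersections}), equivariance reduces to checking compatibility on generators $z=x_i$ of $H^\bullet(\flag)$ acting on a chosen cyclic vector. For $i$ the left endpoint of a cup of $\m(w(x))$, Theorem~\ref{intersections} gives the geometric answer $i^*_{w(x),w(y)}(x_i)=\sum_{j}\ep(i,j)x_j$, summed over $j\in\cE(w(x),w(y))$ on the same circle as $i$. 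Algebraically, $\Psi_x(x_i)$ is the generator on the cup $(i,\si(i))$ and its action on $\Hom$ by postcomposition is computed by the TQFT rules of \cite{KhoJones,StrSpringer,BS1}: placing a dot on the cup and running the surgery along the containing circle $\gamma$ of $\overline{\m(w(y))}\m(w(x))$ produces the generator on $\gamma$, up to a sign determined by the arc-distance to the leftmost vertex of $\gamma$. The analogous statement for $i$ labelling an orphan gives zero on both sides.

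The main obstacle will be the sign bookkeeping. The alternating sign $\ep(i,j)=(-1)^a$ of Theorem~\ref{intersections} arises from the exact sequences of tautological line bundles used in the proof of Theorem~\ref{cohomcomp}, while the algebraic sign arises from the TQFT conventions of \cite{KhoJones,StrSpringer,BS1}. Matching them reduces to a local verification: sliding a dot past a single cup-cap pair along a circle flips the sign exactly once in each convention. Once local compatibility is in place, the second displayed isomorphism of the theorem follows on generators, hence globally by the cyclic structure, and the first is the specialization $x=y$ of the second.
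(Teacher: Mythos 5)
Your overall strategy is the same as the paper's (define $\Psi_x$ and $\Psi_{x,y}$ by matching generators of the two polynomial presentations, then check equivariance on generators), but the sign bookkeeping --- which is the entire mathematical content of this proof --- is handled incorrectly, and your proposed resolution of it does not work. On the algebraic side there is no positional sign: in the TQFT rules of \cite{KhoJones}, \cite{StrSpringer}, \cite{BS1}, the generator of $\dn^{\otimes c(x,y)}$ attached to a circle is simply $X$ in that tensor factor (it does not depend on a choice of vertex on the circle), and composing with the dotted identity cobordism on the cup $(i,\si(i))$ multiplies the relevant factor by $X$ with coefficient $+1$. So there is no algebraic sign ``determined by the arc-distance to the leftmost vertex,'' and your claim that ``sliding a dot past a cup-cap pair flips the sign exactly once in each convention'' is false for the algebraic convention. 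The geometric sign $\ep(i,j)=(-1)^a$ is, however, really there, and with your choice of representatives (left cup endpoints for $\Psi_x$, leftmost circle vertices for $\Psi_{x,y}$) the two actions genuinely disagree. Concretely, take $n=4$, $k=2$, $\m(w(x))=\operatorname{Cup}(\nested)$ with cups $(1,4),(2,3)$ and $\m(w(y))=\operatorname{Cup}(\nxt)$ with cups $(1,2),(3,4)$: the circle diagram is a single circle with leftmost vertex $1$, your $\Psi_x$ sends the algebraic generator on the cup $(2,3)$ to $x_2$, and by Theorem~\ref{intersections} $x_2$ acts on $H^\bullet(\st{w(x)}\cap\st{w(y)})$ by $\ep(2,1)x_1=-x_1$ (the path from $2$ to $1$ has one arc), whereas the algebraic generator acts by $+X_\gamma$, which your $\Psi_{x,y}$ identifies with $+x_1$. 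Your maps are therefore not bimodule maps as defined.

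The paper's proof fixes exactly this point by a more careful choice of representatives: in each circle (of $\overline{\m(w(x))}\m(w(x))$ and of the mixed diagram) it picks an \emph{odd} vertex $i$ and identifies the circle generator with $a_ix_i$, where $a_j=1$ for $j$ odd and $a_j=-1$ for $j$ even. Since every cup and cap joins vertices of opposite parity ($\si(i)-i$ is odd in any crossingless matching), any two odd vertices on the same circle are joined by an even number of arcs, so $\ep(i,r)=+1$ whenever $i$ and $r$ are both odd, and equivariance then holds on the nose. If you wish to keep your representatives you must build these parity correction signs into $\Psi_x$ and $\Psi_{x,y}$; without some such normalization the ``local verification'' you describe does not close up, because only one of the two conventions actually carries a position-dependent sign.
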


\begin{proof}
  Let $x\in W^{n-k,k}$. Consider the circle diagram
  $\overline{\operatorname{Cup}(x)}\operatorname{Cup}(x)$ and pick
  some odd vertex in each circle. If $I(x)$ denotes the set of these
  vertices, then $H^\bullet(\st {w(x)})\cong \C[\{x_i\}_{i\in
    I(x)}]/(\{x_i^2\}_{i\in I(x)})$. This follows from
  Theorem~\ref{cohomattract} by mapping $x_j$ for $j\in\Sm w$ to
  $a_jx_i$, where $i$ lies on the same circle as $j$ and $a_j=1$ if
  $j$ is odd, whereas $a_j=-1$ if $j$ is even. On the other hand
  $$\C[\{x_i\}_{i\in I(x)}]/(\{x_i^2\}_{i\in I(x)})\cong
  \dn^{c(x,x)}\cong \End_{\ST^{n-k,k}}(\tilde P(x\cdot0))$$ by mapping
  the $x_i$ to the $X$ associated with the circle where $i$ lies
  on. These isomorphisms define graded algebra isomorphisms $\Psi_x$
  of the form \eqref{ringisos}.  Similarly we define an isomorphism of
  vector spaces
\begin{multline*}
\Hom_{\ST^{n-k,k}}(\tilde P(x\cdot0),\tilde P(y\cdot0))\cong\dn^{\otimes c(x,y)}\\
\cong\C[\{x_i\}_{i\in I(x,y)}]/(\{x_i^2\}_{i\in I(x,y)})=H^\bullet(\st {w(x)}\cap \st {w(y)})\langle d(x,y)\rangle
\end{multline*}
by choosing a set $I(x,y)$ of odd vertices, one for each circle in the diagram
$\overline{\operatorname{Cup}(x)}\operatorname{Cup}(y)$. Hence we have
the family $\Psi_{x,y}$ of isomorphisms of vector spaces, which we
claim are isomorphisms of bimodules.

To see this let $1\otimes \ldots 1\otimes X\otimes 1\ldots 1$ be the
element in $\dn^{\otimes c(x,x)}$ where the $X$-factor corresponds to
a circle $C$ with leftmost vertex labeled by say $m$. It acts on
$\dn^{\otimes c(x,y)}$ by multiplication with $X$ on the factor
corresponding to the circle containing the vertex $m$. Under the
isomorphism $\Psi_{x,y}$ it corresponds to multiplication with
$a_rx_r$, where $r\in I(x,y)$ is on the same circle as $m$.

Under the isomorphism $\Psi_x$ the element $1\otimes \ldots 1\otimes
X\otimes 1\ldots 1$ is mapped to $a_ix_i$, where $i\in I(x)$ is the
chosen vertex on the circle $C$, hence acts by multiplication with
$a_ix_i$ on
$$H^\bullet(\st {w(x)}\cap \st {w(y)})\langle d(x,y)\rangle=\C[\{x_i\}_{i\in I(x,y)}]/(\{x_i^2\}_{i\in I(x,y)}).$$
Since all the elements in $I(x)$ and $I(x,y)$ are odd, this is the
same (thanks to the relations in $H^\bullet(\st {w(x)}\cap \st
{w(y)}$) as multiplication with $a_rx_r$ where $r\in I(x,y)$ lies on
the same circle as $i$.

Hence the isomorphisms $\Psi_{x,y}$ are equivariant with respect to
the left action. The arguments for the right action are completely
analogous.
\end{proof}

\section{Convolution algebras}
\label{sec:convolution-algebras}

\subsection{Definition of convolution}
\label{sec:def-conv}

The purpose of this section is to introduce an algebra structure on
the direct sum of all bimodules $H^\bullet(\st w\cap\st {w'})$ via a
convolution product and compare it with the algebra $\ST^{n-k,k}$.

Let $\taY$ be the disjoint union of the stable manifolds $\st w$ over
all weights $w$, and let $\tY$ be the disjoint union of the components
$\com S$ over all standard tableaux $S$, equipped with the obvious
maps $\taY\to \spf$ and $\tY\to \spf$, so
\begin{equation*}
\taY:=\bigsqcup_w\st w\rightarrow Y,\quad\quad\quad \tY:=\bigsqcup_S\com S\rightarrow Y.
\end{equation*}

Choose an element $f\in H^*(\taY\times_\spf\taY\times_\spf\taY)$, that is, a cohomology class of the intersection of each ordered triple of stable manifolds.

Both the cohomology groups
\begin{eqnarray}
\label{defK}
  H^\bullet(\taY\times_\spf\taY)&=&\bigoplus_{w,w'}H^\bullet(\st w\cap\st {w'})\\
  H^\bullet(\tY\times_\spf\tY)&=&\bigoplus_{S,S'}H^\bullet(\com
  S\cap\com {S'})
  \label{defH}
\end{eqnarray}
have a natural product structure defined
by a convolution product we denote $*_f$, given by pulling back, cupping together with $f$ and pushing forward on the diagram.
\begin{equation}
\label{product}
\xymatrix{
\taY\times_{Y}\taY\ar@{<-}[dr]^{p_{12}}&\\
&\taY\times_{Y}\taY\times_{Y}\taY\ar@{->}[r]^{\quad p_{13}}&\taY\times_{Y}\taY\\
\taY\times_{Y}\taY \ar@{<-}[ur]^{p_{23}}
}
\end{equation}

More explicitly, the product of two classes $\al\in H^\bullet(\st
w\cap\st {w'})$ and $\be\in H^\bullet(\st {w'}\cap \st {w''})$, is
$$\al*_f\be=(p_{13})_*(p_{12}^*\al\cup p_{23}^*\be\cup f)\in H^\bullet(\st w
\cap\st {w''}).$$  In essence, we do the usual convolution after
replacing the usual fundamental class on
$\taY\times_{Y}\taY\times_{Y}\taY$ with its cap product with $f$,
which one can think of as a ``virtual fundamental class.''  Since any
change of orientation can be absorbed into $f$, we will always give
these manifolds the complex orientation.

Obviously, if one is not very careful in one's choice of $f$, this
algebra will lack many desirable properties; in particular, it will
not be associative or graded.  However, for certain choices of $f$, it
will have these properties. In the next section we will describe a
good choice for $f$ which gives us our desired associative graded
algebra. In the section following that, we will describe the case
$f=1$, which gives an interesting non-associative graded algebra.

A toric analogue of our situation yielding to interesting associative
graded algebras is studied in \cite[\S 4]{BLPW}.  In that case, the
choice of $f$ was simply a careful choice of
orientations (so in that case, $f$ is degree 0, but not the identity).

Although our set up algebro-geometric, all the mentioned examples can
be interpreted as collections of Lagrangians in ambient symplectic manifolds and hence the following
appears natural:

\begin{question}{\rm
What conditions on $f$ must be satisfied to induce an associative product? Can this be expressed in terms of symplectic geometry as a general property of collections of Lagrangian submanifolds (as the components of the Springer fiber are inside the resolved Slodowy slice), maybe together with the data of a certain distinguished bundle on them?}
\end{question}

\begin{remark}{\rm
While superficially similar, the convolution constructions in this
paper are quite different in flavor from those of Chriss-Ginzburg
\cite{CG97}.  Our algebra is modeled on the behavior of coherent
sheaves (as we discuss later in this paper) or the Fukaya category of
a symplectic space in which the Springer fiber lies, not on those of
constructible sheaves.  If one took the constant constructible sheaves
on the components and looked at their Ext-algebra, one could also interpret this in terms of a convolution algebra on the
same underlying vector space, but with a different product structure and
{\it different grading}.

We wish to emphasize that these techniques from \cite{CG97} have no bearing on the structure of coherent sheaves, and thus could not be applied to prove Conjecture \ref{conj} below; it seems to be something of a remarkable coincidence that cohomology of these varieties describe Ext-algebras in different categories.  }
\end{remark}

\subsection{An isomorphism}

\begin{theorem}
\label{Genau}
There exists a class $f\in H^\bullet(\taY\times_\spf\taY\times_\spf\taY)$ such that the bimodule isomorphisms $\Psi_{x,y}$ from Theorem~\ref{bimodules} define in fact an isomorphism of algebras
  $$\ST^{n-k,k}\cong H^\bullet(\taY\times_\spf\taY;\C),$$
where the latter is given the multiplication $*_f$. \\

With the additional grading shifts as in \eqref{dshift} this isomorphism is compatible with the grading.  In the case $n=2k$, it induces an isomorphism of (graded) subalgebras $\KH^{k,k}\cong H^\bullet(\tY\times_\spf\tY)$.
\end{theorem}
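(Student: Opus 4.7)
The plan is to use Theorem~\ref{bimodules} to identify the underlying graded vector space of $H^\bullet(\taY\times_\spf\taY;\C)$ with that of $\ST^{n-k,k}$, and then construct $f$ component by component so that the convolution $*_f$ realizes the known Khovanov/TQFT-style multiplication on $\ST^{n-k,k}$. Since $*_f$ is linear in $f$ and decomposes according to the summands of $H^\bullet(\taY\times_\spf\taY\times_\spf\taY)=\bigoplus_{w,w',w''}H^\bullet(\st w\cap\st{w'}\cap\st{w''})$, it suffices to fix a component $f_{w,w',w''}$ for each triple so that the corresponding bilinear map on pairwise intersection cohomologies matches the Khovanov product on $\Hom_{\ST^{n-k,k}}$-spaces transported via $\Psi_{x,y}$.

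First I would analyze the triple fiber product $\st w\cap\st{w'}\cap\st{w''}$ by invoking the generalization of Theorem~\ref{intersection-fiber} indicated in the excerpt: using the common refinement of the three equivalence relations $\sim_w,\sim_{w'},\sim_{w''}$ and the associated rank function, this triple intersection is again an iterated $\CP^1$-bundle, and its cohomology has the form $\dn^{\otimes c}$ indexed by the closed components of the stacked diagram $\overline{\operatorname{Cup}(w)}\operatorname{Cup}(w')\overline{\operatorname{Cup}(w')}\operatorname{Cup}(w'')$. In particular the pullbacks $p_{12}^*$ and $p_{23}^*$ are identified with the natural maps that forget the middle two factors, controlled by which circles of the triple diagram get merged or split when passing to $\overline{\operatorname{Cup}(w)}\operatorname{Cup}(w')$ respectively $\overline{\operatorname{Cup}(w')}\operatorname{Cup}(w'')$.

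Next, for each triple $(w,w',w'')$, I would choose $f_{w,w',w''}$ as a product of Euler-class-like corrections on $\st w\cap\st{w'}\cap\st{w''}$ whose role is to normalize $(p_{13})_*$ so that it implements, step by step, the elementary surgeries that convert the stacked diagram into $\overline{\operatorname{Cup}(w)}\operatorname{Cup}(w'')$. Combinatorially, collapsing an adjacent cup of $\operatorname{Cup}(w')$ against the corresponding cap of $\overline{\operatorname{Cup}(w')}$ is either a merge of two circles (TQFT multiplication $m\colon\dn\otimes\dn\to\dn$) or a split of one circle (TQFT comultiplication $\Delta\colon\dn\to\dn\otimes\dn$); geometrically, each such elementary surgery is realized by a $\CP^1$-bundle map between an intermediate triple/double intersection, so the required local correction reduces to a Chern-class computation on that $\CP^1$-bundle, which can be carried out directly in the generators $x_i$ of Theorem~\ref{intersections}. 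Iterating, the class $f_{w,w',w''}$ is assembled as the product of these local corrections.

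The main obstacle is pinning down these local factors with the correct signs so that the composed pushforward faithfully realizes $m$ and $\Delta$, including matching the sign conventions $a_j=\pm 1$ from the identifications $\Psi_{x,y}$ in the proof of Theorem~\ref{bimodules}. Once this is verified in the one-surgery case, the general case follows by iteration, and associativity of $*_f$ is automatic from that of $\ST^{n-k,k}$ through the identification. The grading assertion is a degree count: the shift $\langle d(x,y)\rangle=\langle k-c(x,y)\rangle$ is arranged so that the total degree of $f_{w,w',w''}$ plus the codimension drop in $(p_{13})_*$ yields a grading-preserving map. Finally, the subalgebra statement for $n=2k$ is immediate: the standard tableaux are precisely the row-strict tableaux whose cup diagrams have $k$ cups and no rays, so $\tY\times_\spf\tY$ sits in $\taY\times_\spf\taY$ as the direct summand indexed by pairs of standard tableaux, and the convolution restricts to it, matching $\KH^{k,k}\subseteq\ST^{k,k}$ under the identification above.
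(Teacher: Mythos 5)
Your overall strategy -- identify the underlying vector spaces via Theorem~\ref{bimodules}, then engineer $f$ so that $*_f$ reproduces the Khovanov--Stroppel TQFT multiplication, with $f$ arising from Euler-class corrections attached to the elementary merge/split steps of the cobordism -- is the same as the paper's. But there is a genuine gap at the heart of your argument: you never construct the geometric objects that realize the individual TQFT steps, and the objects you gesture at (``intermediate triple/double intersections'') are not the right ones. The paper's proof hinges on a chain of auxiliary varieties $Z_0,\dots,Z_{n-k_w}$ inside $\st{w'}\times\st{w''}$, one for each frame of the cobordism movie, where $Z_0=(\st{w'}\cap\st{w})\times(\st{w}\cap\st{w''})$, $Z_{n-k_w}$ is the diagonal copy of $\st{w'}\cap\st{w''}$, and the intermediate $Z_i$ are cut out by a \emph{hybrid} of conditions (diagonal conditions $F_j=F_j'$ for cups whose saddle has already been performed, cup relations $F_{\si(j)}=N^{-\de(j)}(F_{j-1})$ for those that have not, and fixed-point conditions for unconnected line segments). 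These are not intersections of stable manifolds, so they cannot be produced by the generalization of Theorem~\ref{intersection-fiber} that you invoke. It is the divisor-type inclusions $Z_i\subseteq Z_{i+1}$ or $Z_i\supseteq Z_{i+1}$ that realize each elementary merge ($m$), split ($\Delta$), birth and death as a pullback or pushforward, and that is where the Chern classes $x_j-x_{\si(j)}$ and the signs actually get computed.

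The second, related gap is that even granting the step-by-step computation, you must still explain why a \emph{single} convolution $(p_{13})_*(p_{12}^*\al\cup p_{23}^*\be\cup f)$ with your product of local corrections equals the \emph{composite} of the alternating pullbacks and pushforwards along the chain. This is not automatic: it is exactly the clean-intersection base change / excess intersection formula $i_B^*(i_A)_*g=(j_A)_*(e(E)\cup j_B^*g)$, applied inductively to collapse the chain onto $\bigcap_iZ_i=\st{w'}\cap\st{w}\cap\st{w''}$, which is how the paper actually \emph{produces} $f$ (as the Euler class of the accumulated excess bundle) rather than positing it in advance. Without this step you have no way to verify that your candidate $f_{w,w',w''}$, defined as a product of local factors, really satisfies the defining formula of $*_f$. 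The remaining points in your proposal (associativity transported from $\ST^{n-k,k}$, the degree count against $d(x,y)=k-c(x,y)$, and the $n=2k$ subalgebra statement) are fine.
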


\begin{proof}
  For purposes of the proof, it will be convenient to use cohomology classes $z_i=(-1)^ix_i$ as our generators, rather than $x_i$.

  Let $w',w,w''$ be row strict tableaux, with corresponding cup diagrams $C'=\m(w'),C=\m(w),C''=\m(w'')$.  The multiplication map $$\ST_{w',w}\otimes \ST_{w,w''}\to \ST_{w',w''}$$ is described by a cobordism from $\overline{C'}C\sqcup \overline{C}C''$ given by saddle moves on the pairs of cups in $C$ and connecting the ``loose ends'' of $C$, with the result given by $\overline{C'}C''$ (see \cite{BS1}).
We view this cobordism as a movie of length $n-k_w$ where we do one
saddle move or connection at a time (see \eqref{diagram} for an easy
example). Of course, this requires choosing a total order on the set
of cups and rays in $C$ compatible with the nesting partial order on cups.

Then we construct a sequence $Z_0,Z_1,\ldots, Z_{n-k_w}$ of varieties
$Z_i\subseteq \st{w'}\times \st{w''}$, one to each stage in the movie,
with equations corresponding to the state of the circle diagram at
that point in the cobordism.  By definition, $Z_i$ consists of all tuples $(F_\bullet, F_\bullet')$ of flags such that for any index $j\in \mathcal{E}(w)$
\begin{itemize}
\item $F_j=F_j'$ and $F_{\si(j)}=F'_{\si(j)}$ if $j$ lies on a cup and the corresponding saddle has been done already,
\item $F_{\si(j)}=N^{-\de(j)}(F_{j-1})$ and $F'_{\si(j)}=N^{-\de(j)}(F'_{j-1})$ if $j$ lies on a cup and the corresponding saddle has not been done already,
\item $F_j$ and $F_j'$ coincide with the space for the fixed point if instead $j$ lies on a line segment and $j$ hasn't been connected yet, and just
\item $F_j=F_j'$  if instead $j$ lies on a line segment and $j$ has been connected already.
\end{itemize}

Obviously, $Z_0\cong (\st{w'}\cap \st{w})\times(\st{w}\cap \st{w''})$ and $Z_{n-k_w}=(\st{w'}\cap \st{w})_\Delta,$ the diagonal embedding of that variety in $\st{w'}\times \st{w''}$.

 Furthermore, one can easily check that at the $i$th step of the cobordism, the dimension of $Z_i$ is the number of circles in the diagram, and that there are 7 possibilities (analogous to \cite[Section 6]{BS1}) for the next move:
\begin{itemize}
\item 2 circles become 1: the dimension drops by 1.
\item 1 circle becomes 2: the dimension jumps by 1.
\item a circle and line segment become a line segment: the dimension drops by 1.
\item a line segment ``births'' a circle: the dimension jumps by 1.
\item there is a saddle between two line segments: the variety is unchanged.
\item a line segment becomes a circle: the dimension jumps by 1.
\item 2 line segments become 1: the variety is unchanged.
\end{itemize}

In particular, for any $i$, we have either $Z_{i}\subseteq Z_{i+1}$
or $Z_i\supseteq Z_{i+1}$ as smooth subvarieties of $\st{w'}\times
\st{w''}$.  The most important claim of this proof is that at each
step of the sequence of varieties, pushing forward or pulling back under this
inclusion induces (the first author's reinterpretation of) Khovanov's
action of the cobordism up to that point on the cohomology of $Z_0$.
when we inductively identify the cohomology of $Z_i$ with the
corresponding vector space in Khovanov's construction (or rather the
extension of \cite{StrSpringer}) associated to the picture at the
$i$-th place in the movie.

Throughout, we will implicitly use the fact that $H^\bullet(Z_i)$ is a quotient of  $H^\bullet(\st{w'}\times \st{w''})$, and so use $z_{j,i}$ and $z_{j,i}'$ to denote the image of $z_j$ in $H^\bullet(Z_i)$ coming from the first and second factor respectively.  We then identify $H^\bullet(Z_i)$ with the vector space associated to the diagram of $Z_i$ by identifying the degree $2$ class on a circle with $z_{j,i}$ (resp. $z_{j,i}'$) if the circle passes through the
$j$th on the bottom (resp.\ top) row (see again \eqref{diagram} for the obvious definition of first and second row).  This class doesn't depend on which point on the circle we choose (see Section \ref{sec:cohom-pairw-inters}).

In each case, the pullback or push-forward map is a map of $H^\bullet(\st{w'}\times \st{w''})$-modules, so we only have to calculate the image of the identity of $H^*(Z_i)$ at each step of the cobordism.

The claim is true for $Z_0$ by definition. So assume it to be true for
$Z_{i-1}$ and consider the next move.

If the variety is unchanged, the asserted statement is trivial, since
the combinatorial multiplication also does nothing in this case
(\cite[\S 5.4]{StrSpringer} or \cite[Theorem 6.1 (i)]{BS1}).

Similarly when merging two circles,  the
pullback of 1 is sent to 1 and  $z_{j,i}$ and $z_{j,i}'$ are sent to
the same class (since the two corresponding line bundles
are isomorphic) which by our convention is $z_{j,i+1}=z'_{j,i+1}$.  This is precisely Khovanov's multiplication rule.

In the merging of a circle and line segment, the variable for the
circle ($z_{j,i}$ or $z_{j,i}'$) is sent to 0, since the line bundle become trivialized. The
same happens in the combinatorial multiplication, \cite[(6.1)]{BS1}.

Thus, the only tricky case is push-forward.  If we do a saddle move on
the cup from $j$ to $\si(j)$, then 1 is sent to
$x_j-x_{\si(j)}=(-1)^j(z_{j,i+1}+z_{\si(j),i+1})$, since $Z_i$ is the vanishing
set in $Z_{i+1}$ of    $\en^{\de(j)}\colon\quad\LB_{\si(j)}\to \LB_{j}$
and this is the first Chern class of $\Hom(\LB_{\si(j)},\LB_{j})$.
This is precisely the extension of Khovanov's rule (recall that $z_{j,i}$ is 0 if $j$ lies on a line segment, as in
\cite[(5.4.2)]{StrSpringer}), except for the difference of sign, which
we will deal with momentarily.

For closing a segment to a circle, we obtain $(-1)^{j+1}z_{j,i+1}$, which again
matches the rule from \cite[\S 5.4]{StrSpringer} (up to sign), since
this subvariety is given by the zero set of a section of
$\Hom(F_{j}/F_{j-1},\C)$.

Altogether, this proves that our geometric and the Khovanov-Stroppel algebraic multiplications agree up to signs (not necessarily an overall sign); however, we know exactly how the signs are off from the algebraic multiplication, and thus can correct for them by choosing a different orientation of $Z_i$.  We change the orientation by $(-1)^{\sum_{\ell=1}^i j_\ell}$ where
\begin{eqnarray*}
j_i=\begin{cases} j & \text{if the $i$th step is a saddle creating a circle on the $(i,\si(i))$ cup}\\
j+1 & \text{if the $i$th step closes a circle at $j$} \\
0& \text{otherwise.}
\end{cases}
\end{eqnarray*}
This precisely corrects for the signs which appeared in the description of the multiplication and we arrive at the Khovanov-Stroppel formulas.

In order to describe this in terms of cohomology classes on $\taY\times_\spf\taY\times_\spf\taY$, we need to use base change for clean intersections: if we have a diagram of cleanly intersecting submanifolds
\begin{displaymath}
\xymatrix@!=0.6pc{
 &X\ar@{<-_)}[dl]_{i_A}\ar@{<-^)}[dr]^{i_B} &\\
A\ar@{<-^)}[dr]_{j_A}&&B\ar@{<-_)}[dl]^{j_B}\\ &A \cap B&
}
\end{displaymath}

then by standard algebraic topology (analogous to \cite[Proposition 2.6.47]{CG97}), we have
that $$i_{B}^*(i_A)_*g=(j_A)_*(e(E)\cup j_{B}^*g)$$ where
$E=i_A^*j_A^*T_X/(j_A^*T_A+j_B^*T_B)$ is the excess bundle of the
intersection.

This shows that if we have any chain of manifolds $A_1\supseteq A_2 \subseteq A_3\supseteq\cdots \subseteq A_\ell$, such that the intersection of any subset of the $A_i$'s is clean,  we can always shorten the chain by doing base change, at the cost of multiplying by the Euler class of a bundle on one of the $A_h$'s.  By induction, the iterated push-pull can be described as a pulling back to $\bigcap_{i} A_i$, multiplying the Euler class of a bundle on $\bigcap_{i} A_i$, and pushing forward to $A_\ell$.

Applying this to the $Z_i$'s, we get a bundle on $\bigcap_i Z_i=\st{w'}\cap \st{w}\cap \st{w''}$ whose Euler class is the desired $f$ on that given component of $\taY\times_\spf\taY\times_\spf\taY$; the result follows.
\end{proof}

Since this proof describes the cohomology class $f$ in a somewhat implicit manner, let us attempt to give a more intuitive description at the cost of some loss of precision.  We can write Khovanov's cobordism in normal form, that means as union of 3 pieces: one where circles just join together, one where we add some handles to the cobordism, and one where the cobordism branches out to meet the circles of $\overline{C'}C''$.

Geometrically, each of these portions of the cobordism match up with parts of the convolution procedure:
\begin{itemize}
\item The merging portion of the cobordism corresponds to pull-back to the triple intersection $\st{w'}\cap\st{w}\cap\st{w''}$.
\item The handles portion corresponds to multiplying by $f$ on the triple intersection; in particular, the degree of $f$ is equal to twice the number of handles, i.e.\ the genus of the cobordism.
\item The branching portion corresponds to the push-forward to $\st{w'}\cap\st{w''}$.
\end{itemize}
In particular, if the cobordism has genus zero, than it only consists of merges and branches. In this case it follows from the proof of Theorem \ref{Genau} that the multiplication map
$$H^\bullet(\st w'\cap\st {w})\otimes H^\bullet(\st {w}\cap\st {w''})\rightarrow H^\bullet(\st {w'}\cap\st {w''})$$
giving rise to our desired algebra is just pulling back to the triple intersection and pushing forward, but with appropriate choices of orientations of the involved manifolds.

\subsection{Comparison with the natural choice of orientation}
For the sake of completeness we would like to indicate (without proof) in which sense the convolution algebra with our choice of orientation differs from the convolution algebra obtained when we choose the natural complex orientation. The difference will depend on a parameter $\alpha$, where we set $\alpha=1$ in case we chose the natural complex orientation, and $\alpha=-1$ for our choice of orientation.

\begin{theorem}
Let $w',w,w''$ be standard tableaux, with the corresponding cup diagrams $C'=\m(w'),C=\m(w),C''=\m(w'')$. The image of
$${_{w'}1_{w}}\otimes {_{w}1_{w''}}\in H^\bullet(\st {w'}\cap \st w)\otimes
H^\bullet(\st{w}\cap\st {w''})$$ in $H^\bullet(\st {w'}\cap\st {w''})$ under the convolution
product (in either case) can be calculated as follows: Place $\overline{C'}C$ over
$\overline{C}C''$ and consider the minimal cobordism $\Cob'$ from this
collection of circles to the collection of circles given by $\overline{C'}C''$ (see \cite{KhoJones}, \cite{BS1}).

If we consider this cobordism as a union of saddle moves corresponding to the set $S_\vee$ with respect to $w$ (with some fixed order compatible with the nesting) then ${_{w'}1_{w}}\otimes {_{w}1_{w''}}$ goes to the product $\prod_{i\in\Sm S} \varphi(i)$ where
\small
\begin{equation*}
\varphi(i)=\begin{cases} 1 & \text{if the saddle of $i$ joins two circles}\\
\alpha x_i + x_{\si(i)} & \text{if the saddle of $i$ creates two circles, and  $\gamma_{\si(i)}$ contains $\gamma_i$}\\
x_i +\alpha x_{\si(i)} & \text{if the saddle of $i$ creates two circles, and $\gamma_{i}$ contains $\gamma_{\si(i)}$}\\
\alpha x_i +\alpha x_{\si(i)} &\text{otherwise}
\end{cases}
\end{equation*}
\normalsize
where $\gamma_j$ denotes the created circle containing the vertex labeled by $j$ for any $j$.
\end{theorem}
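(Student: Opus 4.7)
The strategy is to refine the analysis from the proof of Theorem~\ref{Genau}, keeping explicit track of the signs that were absorbed there into the orientation correction. By the bimodule structure, it suffices to compute the image of ${_{w'}1_w}\otimes {_{w}1_{w''}}$ as a cohomology class in $H^\bullet(\st{w'}\cap\st{w''})$. I will realize this image as an iterated push-pull of $1$ along the chain of intermediate varieties $Z_0,Z_1,\dots,Z_m$ appearing in the proof of Theorem~\ref{Genau}, and then convert it into a single product of Euler classes via clean-intersection base change.

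The first step is local: at each transition $Z_i\to Z_{i+1}$ one has either an inclusion (pull-back, preserving the class $1$) or a co-inclusion (push-forward, which multiplies by the Euler class of the normal bundle, computable in terms of the tautological bundles $\LB_j$). For a saddle splitting a circle along the cup $(i,\si(i))$, the normal bundle is $\Hom(\LB_{\si(i)},\LB_i)$, with first Chern class $x_i-x_{\si(i)}$ in the natural complex orientation; taking this as $\alpha=1$ accounts for the sign conventions of Theorem~\ref{bimodules}. For the orientation of Theorem~\ref{Genau}, the additional twist $(-1)^{j_i}$ flips the sign on whichever of $x_i,x_{\si(i)}$ corresponds to the circle that was picked as the ``distinguished'' one in the definition of $\Psi_x$, producing the asymmetric $\alpha x_i+x_{\si(i)}$ or $x_i+\alpha x_{\si(i)}$ according to whether the outer circle is $\gamma_{\si(i)}$ or $\gamma_i$.

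Next, I will organize the case analysis according to the seven local moves classified in the proof of Theorem~\ref{Genau}. The merging or topologically trivial moves (two circles joining, a circle absorbed into a line, two lines merging, a saddle between two lines) correspond to pull-backs or equalities and contribute $\varphi(i)=1$. The two-circle split naturally divides into the two nesting configurations and gives cases 2 and 3 of the formula. The remaining push-forward moves---a line emitting a circle, or a line closing into a circle---form the ``otherwise'' case: one of the endpoints $i$ or $\si(i)$ lies on a line segment, so by Theorem~\ref{intersections} at least one of $x_i,x_{\si(i)}$ pulls back to $0$ on the triple intersection, and the surviving contribution is consistent with $\alpha x_i+\alpha x_{\si(i)}$ (with the appropriate sign matching either orientation convention).

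The final step is to apply the base-change identity for cleanly intersecting submanifolds used at the end of the proof of Theorem~\ref{Genau} to collapse the alternating pull/push chain into a single pull back to $\st{w'}\cap\st w\cap\st{w''}$, multiplication by the product of the local contributions $\varphi(i)$, and push forward to $\st{w'}\cap\st{w''}$. The hardest part will be the sign bookkeeping: one must verify that the orientation twist, the nesting position of $i$ and $\si(i)$ among the newly created circles, and the sign conventions built into the identifications $\Psi_x$ and $\Psi_{x,y}$ combine coherently into the single parameter $\alpha$ displayed in the statement. Once this is checked case-by-case for each local move, the result follows from the same base-change computation already used for Theorem~\ref{Genau}.
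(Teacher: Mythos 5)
The paper itself gives no proof of this statement (the proof is explicitly ``omitted''), so the only benchmark is consistency with the machinery of Theorem~\ref{Genau}; your overall strategy --- rerun the chain $Z_0,\dots,Z_{n-k_w}$, compute the Euler class at each push-forward step, and collapse the alternating push--pull by clean-intersection base change --- is certainly the intended one. However, the proposal defers exactly the part that constitutes the content of the theorem, and in one place the mechanism you propose cannot work. A change of orientation of the $Z_i$'s multiplies each push-forward factor by an \emph{overall} sign $\pm 1$; it can turn the Euler class $x_i-x_{\si(i)}=(-1)^i(z_i+z_{\si(i)})$ into its negative, but never into $x_i+x_{\si(i)}$ or into the asymmetric expressions $\alpha x_i+x_{\si(i)}$ and $x_i+\alpha x_{\si(i)}$. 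The relative sign between $x_i$ and $x_{\si(i)}$ is not an orientation effect at all: it comes from the relations $x_j=-x_{j'}$ for points $j,j'$ joined by a single arc on a circle of the intermediate diagram (Theorem~\ref{intersections}), i.e.\ from the parity of the paths separating $i$ and $\si(i)$ from the distinguished points of the two circles $\gamma_i,\gamma_{\si(i)}$ created by the saddle. That parity is exactly what the nesting of $\gamma_i$ and $\gamma_{\si(i)}$ controls; it is not governed by the choice of distinguished odd vertices in $\Psi_x$, as you assert.

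Your case analysis of the ``otherwise'' clause is also incomplete: besides the line-segment moves, it must include the saddle that splits one circle into two circles \emph{neither of which contains the other}. This is not a degenerate case in which one variable vanishes --- it is precisely the configuration in the paper's own example after swapping the roles of the cup diagrams, where the stated answer $\alpha x_1+\alpha x_3$ has both terms nonzero. By folding all non-nested configurations into the line-segment cases you would assign the wrong coefficient there. To complete the proof one has to carry out the parity bookkeeping separately for the three planar configurations of a circle-creating saddle ($\gamma_i$ inside $\gamma_{\si(i)}$, the reverse, and side by side), compare with the class $x_i-x_{\si(i)}$ and the orientation twist $(-1)^{j_\ell}$ from the proof of Theorem~\ref{Genau}, and check the outcome against the running example; the sketch as written does not yet do this, and this verification is the whole theorem.
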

\begin{proof}
omitted.
\end{proof}

\begin{ex}{\rm
If, for instance, $C'=\operatorname{Cup}(\nested)=C''$, $C=\operatorname{Cup}(\nxt)$
then we have the following possible sequence of diagrams describing
$\Cob'$ (which is in this case a pair of pants joining two circles to
one circle followed by a pair of pants which splits this one circle
into two)
\begin{eqnarray*}
\label{diagram}
\begin{picture}(200,80)(-50,-20)
\put(0,33){$\bullet$}\put(17,36){\oval(10,20)[t]}\put(7,33){\oval(10,20)[b]}
\put(10,33){$\bullet$}
\put(20,33){$\bullet$} \put(17,36){\oval(30,30)[t]}\put(27,33){\oval(10,20)[b]}
\put(30,33){$\bullet$}
\put(0,3){$\bullet$}\put(17,3){\oval(10,20)[b]}\put(7,6){\oval(10,20)[t]}
\put(10,3){$\bullet$}
\put(20,3){$\bullet$} \put(17,3){\oval(30,30)[b]}\put(27,6){\oval(10,20)[t]}
\put(30,3){$\bullet$}
\put(45,18){$\rightarrow$}
\put(60,33){$\bullet$}\put(77,36){\oval(10,20)[t]}\put(67,33){\oval(10,20)[b]}
\put(70,33){$\bullet$}
\put(80,33){$\bullet$} \put(77,36){\oval(30,30)[t]}\put(82,35){\line(0,-1){30}}
\put(90,33){$\bullet$}
\put(60,3){$\bullet$}\put(77,3){\oval(10,20)[b]}\put(67,6){\oval(10,20)[t]}\put(92,35){\line(0,-1){30}}
\put(70,3){$\bullet$}
\put(80,3){$\bullet$} \put(77,3){\oval(30,30)[b]}
\put(90,3){$\bullet$}
\put(105,18){$\rightarrow$}
\put(120,33){$\bullet$}\put(137,36){\oval(10,20)[t]}\put(122,35){\line(0,-1){30}}
\put(130,33){$\bullet$}
\put(140,33){$\bullet$} \put(137,36){\oval(30,30)[t]}\put(132,35){\line(0,-1){30}}
\put(150,33){$\bullet$}
\put(120,3){$\bullet$}\put(137,3){\oval(10,20)[b]}\put(142,35){\line(0,-1){30}}
\put(130,3){$\bullet$}
\put(140,3){$\bullet$} \put(137,3){\oval(30,30)[b]}\put(152,35){\line(0,-1){30}}
\put(150,3){$\bullet$}
\end{picture}
&\quad\quad\quad\quad&
\includegraphics{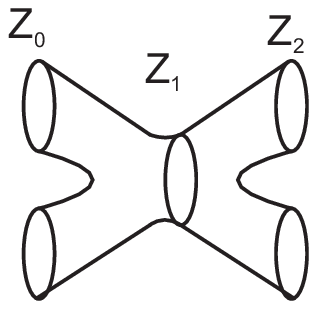}
\end{eqnarray*}

The element ${_{w'}1_{w}}\otimes {_{w}1_{w''}}$ is then mapped to $(x_1+\alpha x_2){_{w'}1_{w''}}$, since the only place where a circle is split into two is at the cup/cap pair attached to the vertices $1$ and $2$ (from the left). Alternatively we could have chosen the sequence where we first remove the cup/cap pair attached to the vertices $1$ and $2$, so that ${_{w'}1_{w}}\otimes {_{w}1_{w''}}$ is then mapped to $(x_3+\alpha x_4){_{w'}1_{w''}}$ which equals $(x_1+\alpha x_2){_{w'}1_{w''}}$ in $H^\bullet(\st {w'}\cap\st {w''})$. The result will always be independent from the chosen sequence, since any such sequence describes the convolution product. If we swap the roles of $C'$ and $C''$ then ${_{w'}1_{w}}\otimes {_{w}1_{w''}}$ would be mapped to $(\alpha x_1+\alpha x_3){_{w'}1_{w''}}$ in $H^\bullet(\st {w'}\cap\st {w''})$.
}
\end{ex}

If $\alpha=-1$, then the resulting algebra is not associative, if $\alpha=1$ then this is exactly Khovanov's arc algebra (with the extension from \cite{StrSpringer}). It seems natural to search for a topological construction making transparent the difference between these two
algebra structures on the same vector space. Our suggestion is to use
a TQFT-like procedure like Khovanov's, but one which is sensitive to
the embedding of cobordisms in 3-space. This is what we propose to call an {\it embedded
  2-dimensional TQFT}.

Equivalently, one can say that our cobordisms keep track of the
nestedness of the circles. In particular, there will be two types of
pair of pants cobordisms, namely one which connects one circle with
two disjoint, not nested circles in the usual embedding for trousers
and a second ``unusual'' one which connects one circle with two
disjoint, but nested circles, with one of the trouser legs pushed down
the middle of the other, see Figure \ref{fig:pants}.

\begin{figure}[htb]
\includegraphics{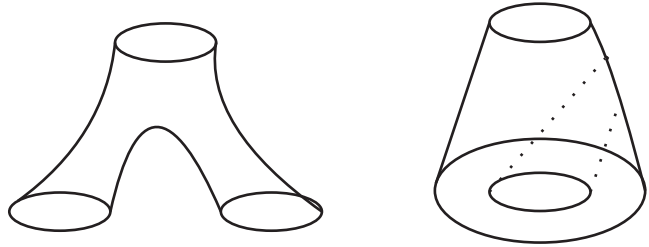}
\caption{Embedded pair of pants: The usual one for not nested circles, the unusual one for nested circles.}
\label{fig:pants}
\end{figure}

For instance, the minimal cobordism displayed in the previous example
would be a composition of a usual pair of pants connecting two circles
to one followed by a generalized pair of pants splitting one circle
into two nested circles. We now define an embedded version of
Khovanov's algebra by assigning the following maps to the pair of
pants morphisms:
\begin{itemize}
\item To a usual pair of pants joining two (not-nested) circles to one
  circle, we associate the multiplication $m:\dn\otimes\dn\rightarrow
  \dn$, $1\otimes 1\mapsto 1$, $X\otimes 1\mapsto X$ $1\otimes
  X\mapsto X$, $X\otimes X\mapsto 0$.
\item To the reverse cobordism, splitting one circle into two (not-nested) circles, we associate the comultiplication
  $\Delta:\dn\rightarrow \dn\otimes\dn$, $1\mapsto -X\otimes
  1-1\otimes X$, $X\mapsto-X\otimes X$.\\
(So far it is exactly the setup of \cite{KhoJones}, except that our $-X$ is $X$ there.)
\item To the ``unusual'' pair of pants joining two nested circles to
  one circle, we associate the map $m':\dn\otimes\dn\rightarrow \dn$,
  $1\otimes 1\mapsto 1$, $X\otimes 1\mapsto X$, $1\otimes X\mapsto
  -X$, $X\otimes X\mapsto 0$, where the first tensor factor is
  associated with the outer circle and the second with the inner
  circle.
\item To the reverse cobordism, we associate the linear map
  $\Delta':\dn\rightarrow \dn\otimes\dn$, $1\mapsto X\otimes
  1-1\otimes X$, $X\mapsto-X\otimes X$, where again the first tensor
  factor is outer and the second is inner.
\end{itemize}
Keeping track of the nestedness using
the rules above describes exactly the (non-associative) multiplication on the convolution algebra with the ordinary complex orientation.

\section{Coherent sheaves and cup functors}
\label{sec:coher-sheav-comp}

In this section, we want to connect our approach with the one of
\cite{CK}, where an alternative (geometric) categorification of the
Jones polynomial was obtained. It agrees on the $K_0$-group level with
the Reshetikhin-Turaev tangle invariant \cite{RT} associated with
$\cU_q(\mathfrak{sl}_2)$, hence also with the decategorification of
\cite{StDuke} which in turn restricts to Khovanov's functorial
invariant. The precise categorical or functorial connection between
the geometric and algebraic-representation theoretic picture is
however open at the moment. In the following, we give some partial
results which indicate that the geometric picture might differ
slightly from the algebraic one.  We note that our results partially
overlap with those obtained independently in \cite{Anno}.

\subsection{Geometric background}
\label{sec:background}

Now, we consider the Springer fiber as a Lagrangian subvariety inside
a larger smooth space.  This ambient space is best defined as the
pre-image under the Springer resolution of a normal slice to the
nilpotent orbit through $N$ at $N$.  We denote this space by $\Snk$.
Our Springer fiber is included as the fiber over $N$. The interested
reader can consult \cite{MV08} for details. For our purposes, the only
important fact about the varieties is that they are smooth, and each
component of the Springer fiber is a Lagrangian subvariety inside
$\Snk$.  These spaces were for instance used in the geometric
construction of knot invariants via Floer homology in the work of
\cite{SS} and \cite{Manolescu}.

In the case where $ n=2k $, this variety has a more convenient
description, which played an important role in the work of Cautis and
Kamnitzer \cite{CK}, who used a compactification of it to define
homological knot invariants. So from now on let $n=2k$. Let $M$ be the
nilpotent endomorphism of $\mC^{2n}$ with two equally sized Jordan
blocks.  Let $\{p_1, p_2,\ldots p_n,q_1,q_2,\ldots q_n\}$ be the basis
of $\mC^{2n}$ such that $M$ has Jordan Normal Form (with
$Mp_i=p_{i-1}$ and $Mq_i=q_{i-1}$) with the $\mC^*$-action as before
on $V$. Now define the space of flags
\begin{equation*}
Z_{n}=\{F_0\subset F_i\subset \ldots F_{n-1}\subset F_n\subset \mC^{2n}\mid \operatorname{dim}_\mC F_i=i, M F_i\subset F_{i-1}\}.
\end{equation*}

We can identify our original vector space $V$ with the span of the
$p_i$, $q_i$ for $1\leq i\leq n$, with the endomorphism $M$
restricting to the nilpotent endomorphism $N$.  Thus, we can identify
$Y$ with the subset of $Z_{n}$ where $F_n=V$.  Furthermore,
$\EuScript{S}_{k,k}$ can be identified with the subset of $Z_n$ where
the projection of $F_n$ onto $V$ (by forgetting the coordinates with
higher indices) is an isomorphism.

In \cite[Section 4]{CK}, the authors define functors between the bounded derived categories $\mathcal{D}(Z_{n'})$ of ($\mC^*$-equivariant) coherent sheaves on $Z_{n'}$ (for varying $n'$) which provide a categorified tangle/knot invariant, in the following sense: to each $(n_1,n_2)$-tangle, there is an associated functor from $\mathcal{D}(Z_{n_1})$ to $\mathcal{D}(Z_{n_2})$ which is a tangle invariant, up to isomorphism, and decategorifies to the Reshetikhin-Turaev tangle invariant associated with (the quantum group of) $\mathfrak{sl}_2$ on the level of the $K_0$-group.

In fact, for all $k$, the space $\Snk$ is embedded in $Z_n$ matching the obvious inclusion of the Springer fiber (see \cite{MV08}).  The compactification obtained by closing this embedding seems to be a likely candidate for extending \cite{CK} beyond the case of blocks of equal size.  However, we will not pursue this idea further in this paper.

Let $\Coh(Z_n)$ be the category of coherent sheaves on $Z_n$ with its bounded derived category $\mathcal{D}^b(Z_n)$.

For our purposes, the $\mC^*$-action carefully tracked in \cite{CK} is unnecessary, so we will ignore it.   Since all the functors of concern are defined by Fourier-Mukai transforms,  they have non-equivariant analogues.

Note that, $Z_{0}$ is just a point, and so $\Coh(Z_{0})$ is the category of vector spaces over $\mC$.

If $C$ is a cup diagram corresponding to a standard tableau $S$ with two rows of size $k$, we can view it as a $(0,2k)$-tangle and consider the associated functor
$$\vp_C:\quad\mathcal{D}^b(Z_0)=\mathcal{D}^b(\Vect) \to\mathcal{D}^b(Z_n)$$ as defined in \cite{CK} (the interested reader may note Equation (\ref{eq:1}) below serves as an inductive definition of this functor).  In general, the functors associated with crossingless tangles are not exact in the standard $t$-structure on $\Coh(Z_n)$ (though of course, they are exact in the triangulated sense). In the special case of a $(0,2k)$-tangle, the situation is much easier: First of all, the functor maps a vector space to an actual sheaf (i.e. is exact in the usual $t$-structure), hence defines (or comes from) a functor
\begin{eqnarray}
\label{cupdiagfunctor}
\vp_C:\quad\Coh(Z_0)=\Vect \to\Coh(Z_n).
\end{eqnarray}
Secondly, as with any exact functor from vector spaces to any abelian category, $\vp_C$ is already determined by its value on $\mC$.

\subsection{Half-densities}
\label{sec:half-densities}

We let $\hd(\com S)$ denote a square-root of the canonical bundle on the component $\com S$.  This sheaf exists by the theorem below (but more generally, it exists at least as a twisted sheaf) and is unique, since the Picard group of any iterated $\CP^1$-bundle is torsion-free.

\begin{lemma}
Each component $\com S$ carries a unique square-root of the canonical bundle.
In fact, $\hd(\com S)\cong\bigotimes_{i\in \Sm S} \LB_i$.
\end{lemma}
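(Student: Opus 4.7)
The statement has two ingredients: uniqueness of a square root (assuming existence), and identifying $\bigotimes_{i\in\Sm S}\LB_i$ as one. For uniqueness I would argue as follows. Since $\com S$ is an iterated $\CP^1$-bundle by \cite[Prop.~5.1]{Fung}, it is simply connected with $H^1(\com S,\cO)=0$, so the exponential exact sequence gives an injection $c_1:\mathrm{Pic}(\com S)\hookrightarrow H^2(\com S,\mZ)$. Combined with Theorem~\ref{cohomcomp} (giving $\{x_i=c_1(\LB_i)\}_{i\in\Sm S}$ as a $\mC$-basis of $H^2$, with the integral version coming from Leray--Hirsch applied to the $\CP^1$-tower), this yields $\mathrm{Pic}(\com S)\cong\mZ^{\Sm S}$ on the basis $\{[\LB_i]\}_{i\in\Sm S}$. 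In particular multiplication by $2$ is injective on $\mathrm{Pic}(\com S)$, so any square root of $K_{\com S}$ is unique.

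For existence, the plan is to show $K_{\com S}\cong\bigotimes_{i\in\Sm S}\LB_i^{\otimes 2}$ by unwinding the iterated $\CP^1$-bundle structure of \cite[Prop.~5.1]{Fung}. Write $\com S=M_k\to M_{k-1}\to\cdots\to M_0=\mathrm{pt}$, where the $\ell$-th step $\pi_\ell:M_\ell\to M_{\ell-1}$ corresponds to choosing $\Fl_{i_\ell}$, for some enumeration $i_1,\ldots,i_k$ of $\Sm S$ compatible with the nesting partial order on cups. Each $\pi_\ell$ is of the form $\CP(E_\ell)\to M_{\ell-1}$ for a rank-$2$ bundle $E_\ell$ built from the already-determined flag data and $\en$, and the tautological sub of $\pi_\ell^*E_\ell$ is $\LB_{i_\ell}=\Fl_{i_\ell}/\Fl_{i_\ell-1}$. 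The relative canonical formula reads
\begin{equation*}
K_{\pi_\ell}\cong\cO_{\CP(E_\ell)}(-2)\otimes\pi_\ell^*\det(E_\ell)^{-1},
\end{equation*}
so everything reduces to the key claim that $\det(E_\ell)\cong\cO$ on $M_{\ell-1}$; granting this, $K_{\pi_\ell}\cong\LB_{i_\ell}^{\otimes 2}$, and composing along the tower gives $K_{\com S}\cong\bigotimes_{i\in\Sm S}\LB_i^{\otimes 2}$, whose square root is the asserted line bundle.

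To prove the key claim, I would identify $E_\ell$ on $M_{\ell-1}$ as a subquotient of $\en^{-1}(\Fl_{i_\ell-1})/\Fl_{i_\ell-1}$ and invoke the same determinant trick used in the proof of Theorem~\ref{cohomcomp}: the short exact sequence $0\to\ker\en\to\en^{-1}(\Fl_{i_\ell-1})\to\Fl_{i_\ell-1}\to 0$ (and its iterates when $\de(i_\ell)>1$) has kernels which are trivial subbundles of $V$, so $\det(\en^{-1}(\Fl_{i_\ell-1}))\cong\det\Fl_{i_\ell-1}$ in $\mathrm{Pic}(M_{\ell-1})$, whence $\det(E_\ell)\cong\cO$. \emph{The main obstacle} is to make this identification and the associated exact sequences genuinely bundle-theoretic at every stage, especially when $\de(i_\ell)>1$: the nesting-compatible order is required to ensure that $\Fl_{i_\ell-1}$ is already determined and lies inside $\mathrm{im}\,\en^{\de(i_\ell)-1}$ on $M_{\ell-1}$, so that $E_\ell$ really has rank $2$ rather than higher rank. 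Carefully bookkeeping this amounts to a small induction on the number of cups.
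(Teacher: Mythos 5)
Your proposal is correct and follows essentially the same route as the paper: induct over the iterated $\CP^1$-bundle structure of $\com S$ and show each relative canonical bundle is $\LB_{i}^{\otimes 2}$, which comes down to the rank-two bundle being projectivized having trivial determinant (the paper identifies it explicitly as $\LB_j\oplus\LB_j^{-1}$ for $j$ the left end of the enclosing cup, while you deduce triviality of the determinant from the K-theoretic exact-sequence argument of Theorem~\ref{cohomcomp} — the same underlying fact). Your worry about $\de(i_\ell)>1$ is not really an obstacle: the bundle at each stage is always $\en^{-1}(\Fl_{i_\ell-1})/\Fl_{i_\ell-1}$, which has rank $2$ since $\Fl_{i_\ell-1}=\en^{\de(i_\ell)}(\Fl_{\si(i_\ell)})\subseteq\im\en$.
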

\begin{proof}
Abbreviate $A=\com S$. As in any bundle, one can always compute the canonical bundle on the total space as the product of the canonical bundle on the base and the relative canonical bundle.  Since each component is fibered over one for a smaller diagram, to show the result by induction, we need only show that the relative canonical bundle of that fibration has a square root.

Let $i$ be an index such that $\si(i)=i+1$.  In this case, our fibration is
$$\mathbf{q}_i:A\to A',$$ where $A'$ is the component for our cup diagram with the cup from $i$ to $i+1$ deleted.  Since $\LB_i^{-1}$ is isomorphic to $\cO(1)$ on the fibers, we have that our fibration is the projectivization of the bundle $\mathbf{q}_{i*}\LB_i^{-1}\cong \LB_j\oplus\LB_j^{-1}$ where $j$ is the left end of the cup immediately nested over $i$.  Thus, we have an exact sequence \begin{equation*}
 0\to \Omega_{A/A'}\to \Hom(\mathbf{q}_i^*(\LB_j\oplus\LB_j^{-1}), \LB_i)\to\Hom(\LB_i,\LB_i)\to 0
\end{equation*}
The multiplicativity of determinants in exact sequences shows that \begin{equation*}
   \Omega_{A/A'}\cong\det( \Omega_{A/A'})\cong \det\left(\Hom(\mathbf{q}_i^*(\LB_j\oplus\LB_j^{-1}), \LB_i)\right)\cong \LB_i^{2}
\end{equation*}
Thus, $\Omega_{A/A'}^{1/2}\cong \LB_i$.
On the other hand, $\mathbf{q}_i^*\Omega_{A'}\cong\bigotimes_{j\in\Sm S\setminus \{i\}} \LB_i$.  Thus, the result follows by induction.
\end{proof}

In fact, these square roots are exactly the images of the $1$-dimensional vector space under the functors $\varphi_C$ associated to cup diagrams:

\begin{theorem}
\label{funcCK}
Let $W$ be any finite dimensional vector space. Then
  $$\vp_C(W)\cong W\otimes_\C \hd(\com S).$$
\end{theorem}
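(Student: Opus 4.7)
The plan is to reduce to identifying $\vp_C(\mC)$: since $\vp_C$ is an exact functor out of $\Vect$, it satisfies $\vp_C(W) \cong W \otimes_\mC \vp_C(\mC)$ automatically, so the whole theorem collapses to showing $\vp_C(\mC) \cong i_*\hd(\com S)$, where $i\colon \com S \hookrightarrow Z_n$ is the inclusion. I would then argue by induction on the number $k$ of cups in $C$. The base case $k=0$ is trivial since $Z_0$ is a point, $\com S$ is a point, and $\vp_C$ is the identity on $\Vect$.

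For the inductive step I would pick an innermost (hence adjacent) cup of $C$, say connecting positions $j$ and $j+1$, and let $C'$ be the $(2k-2)$-point crossingless matching obtained by deleting it and re-indexing. By the Cautis-Kamnitzer definition, the functor associated to $C$ factors as $\vp_C \cong \mathsf{G}^j \circ \vp_{C'}$, where $\mathsf{G}^j\colon \mathcal{D}^b(Z_{n-2}) \to \mathcal{D}^b(Z_n)$ is the elementary cup functor inserting a cup at $(j,j+1)$. The inductive hypothesis then reduces the theorem to computing $\mathsf{G}^j(i'_*\hd(\com {S'}))$ and matching it with $i_*\hd(\com S)$, where $S'$ is the standard tableau attached to $C'$.

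Next I would unwind the Fourier-Mukai kernel of $\mathsf{G}^j$. In the Cautis-Kamnitzer setup it is pull-push along a correspondence $W_j \subset Z_n \times Z_{n-2}$ parametrizing pairs of flags that agree away from positions $j,j+1$ and satisfy $F_{j+1} = M^{-1}(F_{j-1})$, twisted by an explicit line bundle that is chosen so that the functor is exact in the standard $t$-structure (rather than only in the triangulated sense). Restricted to the image of $\com{S'}$ in $Z_{n-2}$, the projection onto $Z_n$ lands in $\com S$ and the resulting morphism is precisely the $\CP^1$-bundle $\mathbf{q}_j\colon \com S \to \com{S'}$ appearing in the proof of the preceding lemma. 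So the calculation reduces to pulling $\hd(\com{S'})$ back along $\mathbf{q}_j$, tensoring with the restriction of the Cautis-Kamnitzer twist, and pushing forward along the closed embedding into $Z_n$; by the projection formula this last step is an isomorphism onto its image.

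The endgame is the sheaf-theoretic version of the preceding lemma: using the relative canonical bundle computation $\Omega_{\com S/\com {S'}} \cong \LB_j^{\otimes 2}$ established there, together with $\hd(\com S) \cong \mathbf{q}_j^*\hd(\com{S'}) \otimes \LB_j$, the desired identification $\mathsf{G}^j(i'_*\hd(\com{S'})) \cong i_*\hd(\com S)$ is equivalent to the Cautis-Kamnitzer twist on $W_j$ restricting to $\LB_j$ on $\com S$. The main obstacle is precisely this bookkeeping: tracking the line bundle and any cohomological-degree shifts built into the Cautis-Kamnitzer definition of $\mathsf{G}^j$ and verifying that the net twist matches $\LB_j$ rather than $\LB_j^{\pm 1}$ tensored with something else. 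Once this matching is established in the single-cup case, iterating the induction proves the theorem.
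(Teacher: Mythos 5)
Your proposal follows essentially the same route as the paper: induction on the number of cups, peeling off a minimal cup via the Cautis--Kamnitzer cup functor, and concluding with the identity $\hd(\com S)\cong \mathbf{q}_j^*\hd(\com{S'})\otimes\LB_j$ coming from the relative canonical bundle computation. The ``bookkeeping obstacle'' you flag at the end is resolved in the paper simply by quoting the explicit formula $\vp_C(W)\cong{\bf i}_*(\LB_j\otimes{\bf q}^*\vp_{C'}(W))$ from \cite[4.2.1]{CK}, which identifies the net twist as exactly $\LB_j$ with no cohomological shift.
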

\begin{proof}
  Our proof is by induction.  Assume that the result is true for all smaller $n$, in particular for the corresponding cup diagrams with less than $n$ points. This set of diagram include for instance the diagram $C'$ which is $C$ with one of its minimal cups removed.  Denote by $S'$ the corresponding standard tableau and let $j$ and $j+1$ be the endpoints of this cup.

Then if ${\bf i}={\bf i}_j$ is the inclusion of the locus where $N(\Fl_{j+1})=\Fl_{j-1}$ holds, and ${\bf q}={\bf q}^j$ is the projection defined on this locus to $Z_{n-2}$ given by forgetting $\Fl_j$ and $\Fl_{j+1}$ as well as applying $N$ to all subspaces larger than $\Fl_{j+1}$, we have (\cite[4.2.1]{CK}) the equation
  \begin{equation}
    \vp_C(W)={\bf i}_*(\LB_j\otimes {\bf q}^*(\vp_{C'}(W)).
  \end{equation}
By induction, our proposition holds for $C'$, so this equation becomes
\begin{equation*}
  \vp_C(W)={\bf i}_*(\LB_j\otimes {\bf q}^*(W\otimes_\C\hd(\com {S'})). \label{eq:1}
\end{equation*}

On the other hand, we have the usual exact sequence of normal bundles
\begin{equation*}
0\to {\bf q}^*\mathcal{N}_{\com{S'}/Y_{n-2}}\to \mathcal{N}_{\com{S}/Y_n}  \to \left.\LB_{j+1}^*\otimes\LB_j\right|_{\com{S}}\to 0.
\end{equation*}
Since $\LB_{j+1}^*|_{X_n^j}\cong\LB_j|_{X_n^j}$, we see that $\canb(\com{S})\cong {\bf q}^*\canb(\com{S'})\otimes \LB_j^{\otimes 2}$, so $\hd(\com{S})\cong {\bf q}^*\hd(\com{S'})\otimes \LB_j$.  Applying this in equation (\ref{eq:1}), we obtain the desired result.
\end{proof}

On the way of trying to connect the different categorifications of the Turaev-Reshetikhin tangle invariants one could hope for an isomorphism of rings
\begin{equation*}
\Ext^\bullet_{\Coh(\Snk)}(i_*\hd(A),i_*\hd(A)) \cong \End(\tilde{P}(x\cdot0))
\end{equation*}
where $P(x\cdot0)$ is the indecomposable projective module associated with a component $A$ under the isomorphisms of \eqref{ringisos}, or more generally a formula like
\begin{equation}
\label{CohP}
\Ext^\bullet_{\Coh(\Snk)}(i_*\hd(A),i_*\hd(B)) \cong \End(\tilde{P}(x\cdot0),P(y\cdot0)).
\end{equation}
as graded vector spaces (up to our usual shifts).
On the other hand, based on work, such that of Leung (\cite{Leu02}), one might expect that
\begin{equation}
\label{Ends}
\Ext^\bullet_{\Coh(\Snk)}(i_*\cO_A,i_*\cO_A) \cong H^*(A)
\end{equation}
or more generally
\begin{equation}
\label{wrong}
\Ext^\bullet_{\Coh(\Snk)}(i_*\cO_A,i_*\cO_B) \cong H^\bullet(A\cap B)
\end{equation}
as graded vector spaces (up to our usual shifts), where $\cO_A$ denotes the structure sheaf on $A$.
In the following we will show that, in fact, all of the above statements are true, except the last one (which might appear as a surprise).

The importance of these square roots of canonical bundles (the so-called {\bf half-densities}) in connection with derived categories of coherent sheaves and the failure of \eqref{wrong} have previously been noticed by physicists in connection with the so-called {\bf Freed-Witten anomaly}, (see \cite{FW99}).

A mathematical manifestation of this phenomenon appears when considering the spectral sequences computing the $\Ext^\bullet$-groups of the square roots of the canonical sheaves in contrast to the ones computing the $\Ext^\bullet$-groups of the structure sheaves of these varieties, as carefully explained for instance in papers such as \cite{KS02}, \cite{Sha04}.

The crucial point hereby is that by the adjunction formula for the canonical bundle on a subvariety (\cite[Proposition 2.2.17]{Huy}), using half-densities instead of structure sheaves compensates for the appearance of the normal bundle in the $E_2$-term of the spectral sequence of \cite{KS02} which we use below.\\

Let now $n=(n-k)+k$ as usual. Let $A,B$ be components in the corresponding Springer fiber $Y$ included in the resolution to the Slodowy slice $\Snk$. Let $i:A\hookrightarrow \Snk$, and $j:B\hookrightarrow \Snk$ be the natural inclusions. The formula~\eqref{CohP} is by Theorem~\ref{bimodules} equivalent to the following result:

\begin{theorem}
\label{halfdensities}
There is an isomorphism of graded vector spaces
\begin{equation*}
\Ext^\bullet_{\Coh(\Snk)}(i_*\Omega(A)^{1/2},j_*\Omega(B)^{1/2})\cong  H^\bullet(A\cap B)\langle d(A,B)\rangle,
\end{equation*}
\end{theorem}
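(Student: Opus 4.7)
The plan is to compute these Ext groups via a local-to-global spectral sequence on $W := A \cap B$, using the clean structure of the intersection together with the Lagrangian condition and the half-density twist to collapse the $E_2$-page onto the Hodge pieces of the ordinary cohomology of $W$. First, I would invoke the standard local-to-global Ext spectral sequence
\begin{equation*}
E_2^{p,q} = H^p\bigl(W,\;\sExt^q_{\cO_{\Snk}}(i_*\hd(A),\,j_*\hd(B))\bigr) \;\Longrightarrow\; \Ext^{p+q}_{\Coh(\Snk)}(i_*\hd(A),\,j_*\hd(B)),
\end{equation*}
using that the local $\sExt$ sheaves are supported on $W$, which by Theorem~\ref{intersection-fiber} is smooth and an iterated $\CP^1$-bundle of dimension $c := c(A,B)$.

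Next, I would compute these local $\sExt$ sheaves via the Koszul resolution of $i_*\cO_A$ restricted to $B$. Around any point of $W$ one chooses local coordinates on $\Snk$ of the form $(w; a; b; \tilde e)$ so that $A = \{b = \tilde e = 0\}$, $B = \{a = \tilde e = 0\}$ and $W = \{a = b = \tilde e = 0\}$. Separating the generators of $\mathcal{I}_A$ into the groups $b$ (which restrict to a regular sequence cutting out $W$ inside $B$) and $\tilde e$ (which vanish identically on $B$), the dualized Koszul complex splits as a product, and its cohomology gives
\begin{equation*}
\sExt^q_{\cO_{\Snk}}(i_*\cO_A,\,j_*\cO_B) \;\cong\; \det N_{W/B} \otimes \Lambda^{q - (k-c)} E^{\vee},
\end{equation*}
concentrated in degrees $k - c \le q \le k$, where $E := T\Snk|_W/(TA + TB)|_W$ is the excess bundle. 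This is the local formula underlying the spectral sequence of \cite{KS02}.

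The crucial third step --- and the main obstacle --- is to identify the resulting bundles using the Lagrangian condition, which is precisely what produces the Freed--Witten-type cancellation of the two half-density twists. Since $A$ and $B$ are Lagrangian in the holomorphic symplectic $\Snk$, the equality $(TA+TB)|_W = TW^\perp$ of symplectic orthogonals gives $E \cong T^*W = \Omega^1_W$, while the induced symplectic form on $(TA+TB)/TW$ identifies $N_{W/A}$ with $N_{W/B}^{\vee}$. The two adjunction identities $\omega_W \cong \det N_{A/\Snk}|_W \otimes \det N_{W/A} \cong \det N_{B/\Snk}|_W \otimes \det N_{W/B}$, combined with $\hd(A)^2 \cong \det N_{A/\Snk}$ and $\hd(B)^2 \cong \det N_{B/\Snk}$ (the Lagrangian adjunction), then imply that $\hd(A)|_W \otimes \hd(B)^{-1}|_W$ and $\det N_{W/B}$ have isomorphic squares. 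Since $\mathrm{Pic}(W)$ is torsion-free, the square root is unique and the two line bundles coincide. Substituting these identifications into the formula from the second step yields
\begin{equation*}
\sExt^q_{\cO_{\Snk}}(i_*\hd(A),\,j_*\hd(B)) \;\cong\; \Omega^{q - d(A,B)}_W,
\end{equation*}
with $d(A,B) = k - c(A,B)$.

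Finally, because $W$ is an iterated $\CP^1$-bundle, it is of Hodge--Tate type: $H^p(W, \Omega^r_W) = 0$ unless $p = r$. The only nonzero entries of the $E_2$-page therefore lie on the diagonal $q = p + d(A,B)$, so all higher differentials must vanish for bidegree reasons and the spectral sequence degenerates at $E_2$. Summing the Hodge pieces of $H^\bullet(W, \C)$ gives
\begin{equation*}
\Ext^\ell_{\Coh(\Snk)}(i_*\hd(A),\,j_*\hd(B)) \;\cong\; H^{\ell - d(A,B)}(W, \C),
\end{equation*}
which is exactly the degree-$\ell$ piece of $H^\bullet(A \cap B)\langle d(A,B)\rangle$. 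The principal difficulty is concentrated in the third step: matching the half-density twists, the determinants of the conormal bundles, and the excess bundle, where the Lagrangian condition is the geometric input that forces every factor to cancel.
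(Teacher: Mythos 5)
Your argument is correct and follows essentially the same route as the paper: the paper simply cites the spectral sequence of \cite[Theorem A.1]{CKS03}, converging from $H^{p,q}(A\cap B)$ to $\Ext^{p+q+d(A,B)}_{\Coh(\Snk)}(i_*\hd(A),j_*\hd(B))$, as a black box, whereas you rederive it by hand via the local-to-global spectral sequence and the Koszul resolution; the essential geometric inputs --- the Lagrangian identifications $E\cong\Omega^1_{A\cap B}$ and $N_{W/A}\cong N_{W/B}^{\vee}$, the cancellation of the half-density twists against $\det N_{W/B}$ using torsion-freeness of the Picard group, and the Hodge--Tate degeneration --- are identical. One small internal slip: the excess factor in your local $\sExt$ formula should be $\Lambda^{q-(k-c)}E$ rather than $\Lambda^{q-(k-c)}E^{\vee}$, as your own third step implicitly assumes when it outputs $\Omega^{q-d(A,B)}_W$ rather than $\Lambda^{q-d(A,B)}T_W$.
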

\begin{proof}
First, note that since $\Snk$ is holomorphic symplectic and the components of the Springer fiber are Lagrangian, so the symplectic form induces an isomorphism between the normal bundle and cotangent bundle.  Further, this shows that on an intersection, the quotient $$T_{\Snk}|_{A\cap B}/(T_A|_{A\cap B} + T_B|_{A\cap B})$$ will be the cotangent bundle $T^*_{A\cap B}$.

  Given these facts, the result follows almost immediately from  \cite[Theorem A.1]{CKS03} (though the theorem appeared with a less complete proof in \cite{KS02}).   In our case, this gives a spectral sequence
  \begin{equation*}
    H^p(A\cap B, \wedge^{q}T^*_{A\cap B})\cong H^{p,q}(A\cap B) \Rightarrow \Ext_{\Coh(\Snk)}^{p+q+d(A,B)}(i_*\Omega(A)^{1/2},j_*\Omega(B)^{1/2})
  \end{equation*}
  where $H^{p,q}$ denotes the usual Dolbeault cohomology.  The   first Chern classes of line bundles (which lie in   $H^{1,1}(A\cap B)$) generate $H^{p,q}(A\cap B)$, so it has only $(p,p)$ Dolbeault cohomology.  Thus, this spectral   sequence has no non-trivial differentials, and we obtain the   desired isomorphism.
\end{proof}

\begin{corollary}
There is an isomorphism of graded vector spaces
\begin{equation*}
\Ext^\bullet_{\Coh(\Snk)}\Big(\bigoplus_{A}i_{A*}\Omega(A)^{1/2},\bigoplus_{A}i_{A*}\Omega(A)^{1/2}\Big)\cong  H(\tY\times_\spf\tY),
\end{equation*}
where the sum runs over all irreducible components $A$.
\end{corollary}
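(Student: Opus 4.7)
The plan is to reduce the corollary to Theorem~\ref{halfdensities} by expanding both arguments of $\Ext^\bullet$ as direct sums. Since the indexing set of irreducible components $A$ is finite and $\Ext^\bullet$ in $\Coh(\Snk)$ is additive (in fact bilinear) with respect to finite direct sums in each variable, we have
\begin{equation*}
\Ext^\bullet_{\Coh(\Snk)}\Big(\bigoplus_{A}i_{A*}\Omega(A)^{1/2},\bigoplus_{B}i_{B*}\Omega(B)^{1/2}\Big) \cong \bigoplus_{A,B}\Ext^\bullet_{\Coh(\Snk)}(i_{A*}\Omega(A)^{1/2},i_{B*}\Omega(B)^{1/2}).
\end{equation*}

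Next, I would apply Theorem~\ref{halfdensities} term by term. For each ordered pair $(A,B)$ of irreducible components, indexed by a pair $(S,S')$ of standard tableaux of shape $(n-k,k)$, the theorem gives a graded vector space isomorphism
\begin{equation*}
\Ext^\bullet_{\Coh(\Snk)}(i_{A*}\Omega(A)^{1/2},i_{B*}\Omega(B)^{1/2}) \cong H^\bullet(A\cap B)\langle d(A,B)\rangle.
\end{equation*}
Summing over all $(A,B) = (\com S,\com {S'})$ and comparing with the definition \eqref{defH} of $H^\bullet(\tY\times_\spf\tY)$ as $\bigoplus_{S,S'} H^\bullet(\com S\cap \com{S'})$, the desired isomorphism follows.

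The only delicate point is bookkeeping of gradings: the right-hand side $H(\tY\times_\spf\tY)$ should be interpreted with the same grading shifts $\langle d(A,B)\rangle$ that were fixed in Section~\ref{sec:convolution-algebras} (compare \eqref{dshift}), so that the shifts appearing on the Ext side match the convention built into the convolution algebra. With that convention in place, the corollary is an immediate consequence of Theorem~\ref{halfdensities}; no additional input is required, and in particular we are not asserting anything about the Yoneda product, only an isomorphism of graded vector spaces.
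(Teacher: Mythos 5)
Your proof is correct and matches the paper's (implicit) argument: the paper states this corollary without proof precisely because it follows from Theorem~\ref{halfdensities} by additivity of $\Ext^\bullet$ over the finite direct sums and comparison with the definition \eqref{defH}. Your remark about the grading-shift convention is a fair reading of a notational looseness in the paper itself and does not affect the argument.
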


% \begin{proof}
% This follows directly from the theorem and the definition \eqref{defH}.
% \end{proof}

Of course, both the left and right side of this isomorphism have natural ring structures given by Yoneda product and by convolution.
The statement of the following conjecture
%together with the graphical description from Remark~\ref{remark}
would give a very explicit description of the Ext-algebra of half-densities:
\begin{conjecture}
\label{conj}
There is an isomorphism of algebras
\begin{equation*}
\Ext^\bullet_{\Coh(\Snk)}\Big(\bigoplus_{A}i_{A*}\Omega(A)^{1/2},\bigoplus_{A}i_{A*}\Omega(A)^{1/2}\Big)\cong  H(\tY\times_\spf\tY).
\end{equation*}
\end{conjecture}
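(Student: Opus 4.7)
The plan is to upgrade the vector space isomorphism of Theorem~\ref{halfdensities} and its corollary to an isomorphism of algebras by identifying the Yoneda composition with the convolution product $*_f$ of Theorem~\ref{Genau}. Since $H(\tY\times_\spf\tY)$ is generated as an algebra by the units $1_S$ and the action of $H^\bullet(\st S)$ on them, and the units correspond under the vector space isomorphism to the canonical elements $\mathrm{id}\in\Ext^0(i_{A*}\Omega(A)^{1/2},i_{A*}\Omega(A)^{1/2})$, it suffices to show that Yoneda composition is given by the same pull--cup--push recipe \eqref{product} with the same correction class $f$ as in the proof of Theorem~\ref{Genau}.

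My first step would be to establish a triple-intersection version of the Chen--Kapustin--Schulz theorem. Given three components $A,B,C$ of $Y$ sitting as (clean, as follows from the iterated $\CP^1$-bundle structure) Lagrangians in the holomorphic symplectic manifold $\Snk$, I would construct a spectral sequence converging to $\Ext^\bullet_{\Coh(\Snk)}(i_{A*}\Omega(A)^{1/2},i_{C*}\Omega(C)^{1/2})$ whose $E_2$ page is a Dolbeault complex on $A\cap C$, and show that Yoneda composition with a class in $\Ext^\bullet(i_{A*}\Omega(A)^{1/2},i_{B*}\Omega(B)^{1/2})$ corresponds on $E_2$-pages to pulling back to $A\cap B\cap C$, multiplying by the Euler class $e(E_{A,B,C})$ of the excess bundle $E_{A,B,C}=i^*T\Snk/(T(A\cap B)+T(B\cap C))|_{A\cap B\cap C}$, and pushing forward to $A\cap C$. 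Degeneration at $E_2$ is guaranteed for type reasons as in Theorem~\ref{halfdensities}, because the cohomology of the triple intersection is again purely of Hodge type $(p,p)$.

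The second step is to compute this excess bundle combinatorially. Using that for Lagrangians the normal bundle coincides with the cotangent bundle and that $\hd(\com S)\cong\bigotimes_{i\in\Sm S}\LB_i$, the contribution of the half-density twists cancels exactly the normal-bundle contribution just as in the pairwise case, and the remaining Euler class $e(E_{A,B,C})$ on $\st{w'}\cap\st{w}\cap\st{w''}$ should be expressible as a polynomial in the restrictions of the tautological classes $x_i$. I would then compare it to the class $f$ extracted in the proof of Theorem~\ref{Genau}, which was itself an iterated excess Euler class for the chain of inclusions/pullbacks $Z_0\supseteq Z_1\subseteq Z_2\supseteq\cdots\subseteq Z_{n-k_w}$.

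The main obstacle is to match these two excess classes, including signs. On the convolution side the class $f$ arose from a movie presentation of the Khovanov cobordism, its degree equalling twice the genus, with each handle contributing a factor of the form $x_i-x_{\si(i)}$; on the coherent-sheaf side one has an intrinsic Euler class on the triple intersection with no a priori reason to factor along such a movie. The expected route is to run the same stepwise degeneration argument on the coherent side: choose an ordering of the cups of $C=\m(w)$ compatible with nesting, and for each step locally identify the Ext-contribution with one of the seven local moves (join, split, line--circle merge, etc.) via an explicit Koszul-type resolution of the relevant sheaf on $\Snk$, verifying that the local multiplication rule matches the embedded 2d-TQFT rules from Section~\ref{sec:convolution-algebras}. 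Associativity on the Yoneda side is automatic, which would then force the sign-correction of Theorem~\ref{Genau} to be the correct one. Once the local models are matched, the global identification of Yoneda with $*_f$ follows by induction on the length of the movie, exactly as in the proof of Theorem~\ref{Genau}, proving Conjecture~\ref{conj}.
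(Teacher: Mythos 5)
The statement you are proving is stated in the paper as Conjecture~\ref{conj}, and the paper offers \emph{no} proof of it: the authors explicitly write that they ``are unsure as to which product on this vector space corresponds to Yoneda's,'' and the only thing they establish is the vector-space isomorphism (Theorem~\ref{halfdensities}) together with a partial verification for a single component (Theorem~\ref{Extofcomp}). So your proposal cannot be checked against a proof in the paper; it has to stand on its own, and as written it is a research program rather than a proof. Two of its steps are precisely the open difficulties. First, the claim that Yoneda composition is computed on the $E_2$-page of a triple-intersection Caldararu--Katz--Sharpe-type spectral sequence by pull--cup--push with an excess Euler class is not established anywhere (the pairwise CKS result is itself cited in the paper as having ``a less complete proof'' in one of its sources). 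Even granting degeneration for Hodge-type reasons, the spectral sequence only identifies the \emph{associated graded} of a filtration on $\Ext^\bullet$; you need the filtration to be multiplicative and the identification to be canonical enough that composition descends to the asserted geometric operation with no lower-order correction terms. None of this is automatic, and it is exactly where the difficulty of the conjecture lives.

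Second, your closing argument --- that ``associativity on the Yoneda side is automatic, which would then force the sign-correction of Theorem~\ref{Genau} to be the correct one'' --- does not work. Associativity does not pin down a product: the paper itself exhibits two different products on this vector space built from two choices of orientation (the parameter $\alpha=\pm1$), and more generally an associative product with the given idempotents is far from unique, so knowing that the Yoneda product is associative and agrees with $*_f$ on units and on the single-component subalgebras (Theorem~\ref{Extofcomp}) does not force it to equal $*_f$ globally. The ``local model'' matching of each of the seven cobordism moves with an explicit Koszul-type computation on $\Snk$ is stated as an expectation, not carried out, and it is the substantive content one would need. In short: the outline is a sensible plan of attack and correctly identifies the vector-space input (Theorem~\ref{halfdensities}) and the shape of the comparison with the class $f$ from Theorem~\ref{Genau}, but the two key steps are asserted, not proven, and the statement remains a conjecture.
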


\begin{remark}
{\rm
Of course, this $\Ext$-algebra is, as a vector space, {\em also} isomorphic to Khovanov's arc algebra, and at the moment, the authors are unsure as to which product on this vector space corresponds to Yoneda's. Having clarified this conjecture it wouldn't be too difficult to extend it to \eqref{defK}.
}
\end{remark}

An affirmative or negative answer to this conjecture would direct us toward further questions on the correct geometric perspective on knot homology:

\begin{question}
{\rm
Is it possible to construct a functorial tangle invariant and categorification of the Jones polynomial using our new convolution algebras? If so what is the relation to previous geometrical ones (\cite{CK}, \cite{SS}, \cite{Manolescu}) and to algebraic/representation theoretic approaches (\cite{Khotangles}, \cite{StDuke})?
}
\end{question}

As was noted in
\cite{Anno}, these half-densities are so-called {\bf exotic sheaves} as introduced by Bez\-rukav\-ni\-kov \cite{BezNon}. This suggests that the conjecture and questions above could be investigated using the noncommutative Springer resolution and related techniques of algebraic geometry.\\

We can perform a partial verification of Conjecture~\ref{conj}, considering only a single component at a time.

 \begin{theorem}
\label{Extofcomp}
Let $A$ be an irreducible component of $Y$ and $i:A\hookrightarrow\Snk$ the inclusion. Let $\cO_A$ be the structure sheaf on $A$.
Then there are isomorphisms of graded rings
\begin{equation*}
\Ext^\bullet_{\Coh(\Snk)}(i_*\hd(A),i_*\hd(A))\cong \Ext^\bullet_{\Coh(\Snk)}(i_*\cO_A,i_*\cO_A)\cong H^\bullet(A).
\end{equation*}
\end{theorem}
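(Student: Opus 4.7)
The plan is to reduce both Ext-algebras in the statement to a single local-to-global spectral sequence, using the Lagrangian condition to identify the normal bundle with $\Omega^1_A$, and then using the Hodge--Tate nature of $H^\bullet(A)$ to force collapse and identify the answer with $H^\bullet(A)$.

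First, I claim that the two local Ext-sheaves agree. Since $i\colon A\hookrightarrow\Snk$ is a regular embedding and $\hd(A)$ is a line bundle, a Koszul resolution argument (using that $\hd(A)$ is locally trivial) gives
\begin{equation*}
\sExt^q_{\Snk}(i_*\hd(A),i_*\hd(A)) \;\cong\; i_*\wedge^q N_{A/\Snk} \;\cong\; \sExt^q_{\Snk}(i_*\cO_A,i_*\cO_A),
\end{equation*}
the line bundle cancelling in the computation because $\mathcal{H}om_A(\hd(A),\hd(A)) \cong \cO_A$. As in the proof of Theorem~\ref{halfdensities}, the Lagrangian condition and the holomorphic symplectic form on $\Snk$ give $N_{A/\Snk}\cong T^*_A=\Omega^1_A$, so the local Ext sheaf equals $i_*\Omega^q_A$ in either case.

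Next, I feed this into the local-to-global spectral sequence
\begin{equation*}
E_2^{p,q} \;=\; H^p(A,\Omega^q_A) \;=\; H^{p,q}(A) \;\Longrightarrow\; \Ext^{p+q}_{\Snk}(i_*\mathcal{L},i_*\mathcal{L}),
\end{equation*}
with $\mathcal{L}\in\{\cO_A,\hd(A)\}$. Since Fung's theorem guarantees that $A$ is an iterated $\CP^1$-bundle, $A$ is K\"ahler with algebraic cohomology concentrated in bidegrees $(p,p)$, so $H^{p,q}(A)=0$ unless $p=q$. A parity check now rules out every differential: $d_r$ shifts the invariant $q-p$ by the odd integer $1-2r$, while all nonzero classes have $q-p=0$. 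Hence the spectral sequence collapses at $E_2$, and the Hodge decomposition $\bigoplus_{p+q=m}H^{p,q}(A)\cong H^m(A,\C)$ identifies the abutment with $H^\bullet(A,\C)$ as a graded vector space in both cases.

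To upgrade this to a graded ring isomorphism, I use that the local-to-global Ext spectral sequence is a spectral sequence of algebras for the Yoneda product on the abutment, whose $E_2$-product is induced from the wedge pairing $\wedge^q\Omega^1_A\otimes\wedge^{q'}\Omega^1_A\to\wedge^{q+q'}\Omega^1_A$ on local $\sExt$-sheaves. After collapse this realises the usual multiplication on Dolbeault cohomology $H^\bullet(A,\Omega^\bullet_A)$, which under the Hodge decomposition agrees with the cup product on $H^\bullet(A,\C)$. The principal subtle step is precisely this multiplicativity together with the identification of the $E_2$-product with the Hodge-theoretic cup product; a conceptually clean way to sidestep both issues is to exhibit a formal Darboux-type trivialization of a tubular neighborhood of $A$ inside $\Snk$ as the cotangent bundle $T^*A$, which reduces the entire computation to the classical Hochschild--Kostant--Rosenberg isomorphism for the zero-section embedding $A\hookrightarrow T^*A$, where the ring structure is well understood.
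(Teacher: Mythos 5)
Your proof is correct and follows essentially the same route as the paper's: identify the local $\sExt$-sheaves with $\wedge^q N_{A/\Snk}\cong\Omega^q_A$ via the Koszul resolution and the Lagrangian condition, feed this into the local-to-global spectral sequence, and use the fact that the iterated $\CP^1$-bundle $A$ has cohomology concentrated in bidegree $(p,p)$ to force collapse and identify the abutment, as a ring, with $H^\bullet(A;\C)$. The one point where you diverge is the first isomorphism: you cancel the line bundle inside the local $\sExt$-sheaves (which is valid for self-Ext, since $\hd(A)^\vee\otimes\hd(A)\cong\cO_A$), whereas the paper observes that $\hd(A)$ extends to a global line bundle on $\Snk$, so the two pushforwards differ by tensoring with a line bundle on the ambient space and the ring isomorphism is immediate; note that the paper's warning that this isomorphism fails ``in general'' refers to Ext between distinct components, where your cancellation argument would no longer apply.
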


\begin{remark}
{\rm
Note that thanks to \eqref{ringisos} the rings appearing in the theorem can also also be identified with the endomorphism rings of    indecomposable projective and at the same time injective modules in the associated parabolic category $\cO$ for $\mathfrak{sl}_n$.
Based on the results of this paper, the slight generalization from components to arbitrary stable manifolds shouldn't be too difficult by mimicking \cite[Section 6]{BS2}.
}
\end{remark}

\begin{proof}[Proof of Theorem~\ref{Extofcomp}]
The first isomorphism follows from the fact that $\hd(A)$ deforms to a global line bundle on $\Snk$, the pullback of $\prod_{i\in S}\LB_i$ from $Z_n$.  (It's worth noting, this isomorphism does {\em not} hold in general.)

To compute the Ext-algebra on the left hand side we first compute the Ext-sheaves $\sExt^\bullet(i_*\cO_A,i_*\cO_A)$.
The irreducible component $A$ is smooth, hence a local complete intersection (\cite[Example 8.22.1]{Hartshorne}). Since we can work locally, we might assume that $A$ is the zero locus of a regular section $s\in H^0(E)$ for some bundle $E$ on $Z$.

Then we have the Koszul resolution
\begin{equation}
\label{Koszul}
0\rightarrow \bigwedgie n E^*\rightarrow \bigwedgie {n-1} E^*\rightarrow \ldots \bigwedgie 1 E^*\rightarrow E^*\rightarrow\cO_Z\rightarrow i_*\cO_C\to 0.
\end{equation}
where the differential maps $f_1\wedge f_2\wedge\ldots \wedge f_r\in \bigwedgie r E^*$ to
$$\sum_{i=1^r}(-1)^{i-1}f_i(s) f_1\wedge f_2\wedge\ldots \wedge f_{i-1}\wedge f_{i+1}\wedge \ldots\wedge f_r.$$ The Koszul complex is exact, since $s$ is a regular section (\cite[page 688]{GH}).

The beginning of the resolution \eqref{Koszul} defines a surjection
\begin{equation}
\label{surjE}
E^*\rightarrow\cI\rightarrow 0
\end{equation}
where $\cI$ is the ideal sheaf of $A$ in $Z$. Tensoring with $i_*\cO_A$, we get a surjection $i_*E^*\rightarrow\cI/\cI^2=\mathcal{N}^*_{A/Z}$. This map is an isomorphism for dimension reasons.

Now $\sExt^\bullet(i_*\cO_A,i_*\cO_A)$ can be calculated as the cohomology sheaves of the complex $i_*\wedge^\bullet E$. Since $i_*s=0$, the differentials in this complex are all zero, hence
\begin{equation}
\label{isoExtN}
\sExt^\bullet(i_*\cO_A,i_*\cO_A)\cong\wedge^*\mathcal{N}_{A/Z}
\end{equation}
as graded vector spaces.

We have to compare the ring structure. We first claim that there is a map of differential graded algebras
\begin{equation*}
c:\quad \bigwedgie \bullet E \rightarrow\sExt^\bullet(i_*\cO_A,i_*\cO_A)
\end{equation*}
sending $\xi\in\wedge^r E$ to the contraction with $\xi$, denoted  $c_\xi$. The differentials in the Koszul complex \eqref{Koszul} are given by contraction $c_s$ with the section $s$, and $c_\xi$ and $c_s$ super commute. Therefore $c(\xi)$ is a chain map of degree $k$. Since contraction satisfies $c_\xi\circ c_\zeta=c_{\xi\wedge\zeta}$, the map $c$ intertwines the wedge product on the source space with the composition in the target space. Passing to cohomology, we obtain that \eqref{isoExtN} is an isomorphism of algebras.

Since the component $A$ is Lagrangian inside $Z$, we have a canonical isomorphism between the normal bundle of $A$ in $Z$ and the cotangent bundle of $A$, in formulas $\mathcal{N}_{A/Z}\cong T^*_A$.

It thus follows from the isomorphism of (\ref{isoExtN}) that the
cohomology of the Ext-sheaf $\sExt^\bullet(i_*\cO_A,i_*\cO_A)$ is canonically isomorphic to the Dolbeault
cohomology $H^\bullet(A; \wedge^\bullet T^*_A)$ (here we abuse
notation, and identify the vector bundle $\wedge^q T^*_A$ with its sheaf of
holomorphic sections) with its usual product induced by $\wedge$. By
the Hodge theorem, this is isomorphic to the de Rham cohomology
$H^\bullet(A;\mC)$ equipped with the cup product.

Thus, we have the local-global spectral sequence
\begin{multline*}
 E_2^{p,q}: H^p(A;\wedge^q T^*A)=H^p(A;\wedge^q\mathcal{N}_{A/Z})=H^{p+q}(A;\sExt^\bullet(i_*\cO_A,i_*\cO_A))\\
 \Longrightarrow \Ext^{p+q}_{\Coh(\Snk)}(i_*\cO_A,i_*\cO_A).
\end{multline*}
This sequence collapses due to the Hodge diamond only having diagonal support, as in the proof of Theorem \ref{halfdensities}, and thus induces a ring isomorphism from $H^\bullet(A;\mC)$ to the ring $\Ext^\bullet_{\Coh(\Snk)}(i_*\cO_A,i_*\cO_A)$.
\end{proof}

\section{Exotic sheaves and highest weight categories}

In fact, we would like to propose a correspondence between weight sequences and certain sheaves on $Z_n$, which extends that sending a full crossingless matching on $n$ points to half-densities on the corresponding component of the Springer fiber.\\

Let $w$ be a weight sequence of length $n$. We denote by $r(w)$ be the number of cups in $C(w)$.  Let
\begin{equation*}
Z_w=\{\Fl_*\in  Z_n|\Fl_{i-1}=\en^{\de(i)}\Fl_{\si(i)} \text{ for $i$ and $\si(i)$
    connected in $C(w)$}\}.
\end{equation*}
with its embedding $j=j_w:Z_w\rightarrow Z_n$. If $r(w)=1$ we have the
map $q:Z_w\rightarrow Z_{n-2}$ as in \eqref{cupdiagfunctor}, and in
general a map $p: Z_w\rightarrow Z_{n-2r(w)}$ by taking compositions
of such maps, one for each cup.

Consider the line bundle $\LB_w=\bigotimes_{i\in \Sm w}\LB_i$ on $Z_w$
and set $\cM_w=j_*\LB_w$. In the setup of \cite{CK}, the latter has the
following description: to the cup diagram $C(w)$, Cautis and Kamnitzer
associated a functor $F: D^b(Z_{n-2r(w)})\rightarrow D^b(Z_n)$ and (by
comparing the definitions) we have $j_*\LB_w= F(\LB_{\tilde{w}})$,
where $\tilde{w}$ is the induced weight sequence on the orphaned
points of $C(w)$.

We have the following two extreme cases:
\begin{itemize}
\item If $r(w)=0$, then $F$ is just the identity functor and we have
  $\cM_w=\LB_w$.
\item If $r(w)=k$, then $Z_w$ is just a point and in fact,
  $\cM_w=\vp_{C(w)}(\mC)$ as in Theorem~\ref{funcCK}.
\end{itemize}
% This is what we were looking for. By
% construction, the additive category generated by the $\cM_w$ and their
% shifts in the derived category is stable under the cup and cap
% functors from \cite[4.2.1]{CK}.

Let $\Theta_w$ be the set of weight diagrams which differ from $w$ by
switching the signs on opposite ends of any number of cups in
$\op{Cup}(w)$. For an object $M\in D^b(Z_n)$ we denote by $[M]$ its
class in $K_0(D^b(Z_n))$. Then the following holds

\begin{proposition}
\label{multiplicities}
  \begin{equation}
   [\cM_w]=\sum_{w'\in\Theta_w}(-1)^{\ell(w)-\ell(w')}[\LB_{w'}]
  \end{equation}
 In particular, the classes of $[\cM_w]$ and $[\LB_w]$ span the same sublattice of the Grothendieck group.
\end{proposition}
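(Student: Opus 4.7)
The plan is to proceed by induction on $r(w)$, the number of cups in $C(w)$. The base case $r(w) = 0$ is immediate: $Z_w = Z_n$, $\cM_w = \LB_w$, $\Theta_w = \{w\}$, and the formula is trivial. For the inductive step, I would choose an innermost cup $c = (j, j+1)$ of $C(w)$ (necessarily of length $2$, since an innermost cup contains no nested cups), and let $w^-$ denote the weight of length $n-2$ obtained by deleting positions $j$ and $j+1$ from $w$. Then $C(w^-)$ equals $C(w)$ with $c$ removed (after re-indexing), so $r(w^-) = r(w) - 1$. The Cautis--Kamnitzer recursion recalled from \cite[4.2.1]{CK} gives
\begin{equation*}
\cM_w = \mathbf{i}_*\bigl(\LB_j \otimes \mathbf{q}^*\cM_{w^-}\bigr),
\end{equation*}
where $\mathbf{i}\colon Z_n^j \hookrightarrow Z_n$ is the inclusion of the locus $\{NF_{j+1} = F_{j-1}\}$ and $\mathbf{q}\colon Z_n^j \to Z_{n-2}$ is the projection. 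Applying the inductive hypothesis to $w^-$ and using linearity reduces the problem to computing $[\mathbf{i}_*(\LB_j \otimes \mathbf{q}^*\LB_{v^-})]$ in $K_0(Z_n)$ for each $v^- \in \Theta_{w^-}$.

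Each such $v^-$ corresponds to two elements of $\Theta_w$: the weight $v$ with $\vee\wedge$ at positions $(j,j+1)$ agreeing with $v^-$ elsewhere, and its partner $v^*$ with $\wedge\vee$ at $(j,j+1)$. Two key identifications finish the argument. First, as established in Section~\ref{sec:background}, the inclusion $\mathbf{i}$ is a smooth divisor cut out by the section of $\LB_{j+1}^{-1}\otimes\LB_j$ coming from the induced map $\bar{N}\colon\LB_{j+1}\to\LB_j$, yielding the Koszul resolution
\begin{equation*}
0 \to \LB_{j+1}\otimes\LB_j^{-1} \to \cO_{Z_n} \to \mathbf{i}_*\cO_{Z_n^j}\to 0.
\end{equation*}
Second, a direct comparison using the definition of $\mathbf{q}$ identifies $\LB_j\otimes \mathbf{q}^*\LB_{v^-}$ with the restriction $\LB_v|_{Z_n^j}$. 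Tensoring the Koszul resolution with $\LB_v$ and noting that $\LB_v \otimes \LB_{j+1}\otimes\LB_j^{-1}=\LB_{v^*}$ (since $j \in \Sm v$ but $j+1 \in \Sm{v^*}$) gives the key single-cup formula $[\mathbf{i}_*(\LB_v|_{Z_n^j})] = [\LB_v]-[\LB_{v^*}]$.

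What remains is a parity lemma for signs: one verifies by a direct count of inversions in weight sequences that $\ell(w)-\ell(v) \equiv \ell(w^-)-\ell(v^-) \pmod 2$ for every $v\in\Theta_w$ with $v_j=\vee,\, v_{j+1}=\wedge$, and that $\ell(v^*) = \ell(v) - 1$. The point is that each flipped cup of $C(w)\setminus\{c\}$ contributes an even amount to the discrepancy $\bigl(\ell(w)-\ell(w^-)\bigr) - \bigl(\ell(v)-\ell(v^-)\bigr)$, both for cups lying entirely to one side of $(j,j+1)$ and for cups containing $(j,j+1)$. Combining the inductive hypothesis with the Koszul formula then gives
\begin{equation*}
[\cM_w] = \sum_{v^-\in\Theta_{w^-}}(-1)^{\ell(w^-)-\ell(v^-)}\bigl([\LB_v]-[\LB_{v^*}]\bigr) = \sum_{v'\in\Theta_w}(-1)^{\ell(w)-\ell(v')}[\LB_{v'}].
\end{equation*}
The final claim on sublattices then follows from triangularity: the coefficient of $[\LB_w]$ in $[\cM_w]$ is $+1$ and every other $v\in\Theta_w$ has $\ell(v)<\ell(w)$, so one can solve for $[\LB_w]$ in terms of $\{[\cM_v]\}$ by induction on $\ell(w)$.

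The main obstacle I anticipate is the line-bundle identification $\LB_j\otimes\mathbf{q}^*\LB_{v^-}\cong \LB_v|_{Z_n^j}$: because $\mathbf{q}$ both reindexes the flag positions and applies $N$ to the higher subspaces, one must carefully unwind how pullback acts on the various tautological line bundles, using the constraint $F_{j-1}=NF_{j+1}$ on $Z_n^j$ to match quotients on the two sides. Once this identification and the parity statement are verified, the rest is a formal assembly of the pieces.
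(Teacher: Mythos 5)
Your proposal is correct and follows essentially the same route as the paper: induction on the number of cups, peeling off a minimal cup via the Cautis--Kamnitzer recursion, and using the short exact sequence $0\to\LB_{v^-}\to\LB_{v^+}\to\vp_i(\LB_v)\to 0$ (which you derive explicitly as the Koszul resolution of the divisor $Z_n^j$ tensored with $\LB_v$, where the paper merely cites it) together with the parity of $\ell$ to match the two sums. The two technical points you flag --- the identification $\LB_j\otimes\mathbf{q}^*\LB_{v^-}\cong\LB_v|_{Z_n^j}$ and the sign congruence --- are exactly the steps the paper also leaves implicit, so your write-up is if anything slightly more complete.
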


\begin{proof}
 We induct on the number of cups in $C(w)$.  If this is 0, we have reduced to the fact that $\cM_w\cong \LB_w$ in this case.  Otherwise, we can write $\cM_w=\vp_i(\cM_{v})$ where $i$ is on the left end of a minimal cup in $C(w)$ and $v$ is the induced weight sequence on $S-\{i,i+1\}$.  Now, we can assume that $[\cM_v]=\sum_{v'\in\Theta_v}(-1)^{\ell(v)-\ell(v')}[\LB_{v'}]$.
Let $v^+$ be $v$ with the cup at $i,i+1$ added and marked with $\vee\wedge$, and $v^-$ be the same, but with $\wedge\vee$ at $i,i+1$ instead.  Then, as we noted previously, we have an exact sequence
\begin{equation}\label{cup-exact}
0\to\LB_{v^-}\to\LB_{v^+}\to\vp_i(\LB_v)\to 0
\end{equation}
and thus in the Grothendieck group, $[\vp_i(\LB_v)]=[\LB_{v^+}]-[\LB_{v^-}]$.

Note that $\Theta_w=\Theta_{v}^+\sqcup \Theta_v^-$, and $\ell(v)\equiv \ell(v^+)\equiv \ell(v^-)+1\pmod 2$ so
\begin{eqnarray*}
 [\cM_w]=[\vp_i(\cM_v)]&=&\sum_{v'\in \Theta_v}(-1)^{\ell(v)-\ell(v')}\left([\LB_{(v')^+}]-[\LB_{(v')^-}]\right)\\
 &=&
  \sum_{w'\in\Theta_w}(-1)^{\ell(w)-\ell(w')}[\LB_{w'}]
\end{eqnarray*}
\end{proof}

\begin{remark}
\label{KL}
{\rm Proposition~\ref{multiplicities} should be compared with \cite[(5.12)]{BS1} which implies that  $[\cM_w]=\sum_{w'\in\Theta_w}d_{w,w'}(-1)[\LB_{w'}]$, where $d_{w,w'}$ is a Kazhdan-Lusztig polynomial (arising from perverse sheaves on Grassmannians).
}
\end{remark}

By the Cellular Fibration Lemma \cite[Lemma 5.5.1]{CG97} and \cite[Theorem 6.2]{CK}, the $\LB_w$'s generate $D^b(Z_n)$, and in fact are a basis of the Grothendieck group. As a consequence of Theorem~\ref{multiplicities} we have the following:
\begin{corollary}
 The objects $\cM_w$ generate the category $D^b(Z_n)$ and are a basis of its Grothendieck group.
\end{corollary}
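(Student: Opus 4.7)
The plan is to deduce both assertions from Proposition~\ref{multiplicities} together with the cellular fibration paving of $Z_n$. Let $N$ denote the number of weight sequences indexing our families. By the Cellular Fibration Lemma \cite[Lemma~5.5.1]{CG97} together with \cite[Theorem~6.2]{CK}, the classes $[\LB_w]$ form a $\mZ$-basis of $K_0(\mathcal{D}^b(Z_n))$, so the Grothendieck group is free abelian of rank $N$; moreover the line bundles $\LB_w$ themselves generate the triangulated category.

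For the Grothendieck group statement, I would invoke the ``in particular'' clause of Proposition~\ref{multiplicities}, which identifies the $\mZ$-sublattice spanned by $\{[\cM_w]\}$ with that spanned by $\{[\LB_w]\}$; by the previous paragraph this sublattice is all of $K_0(\mathcal{D}^b(Z_n))$. A generating set of cardinality $N$ in a free abelian group of rank $N$ is automatically a $\mZ$-basis, since the tautological surjection $\mZ^N\twoheadrightarrow K_0(\mathcal{D}^b(Z_n))$ must be an isomorphism (first after tensoring with $\mathbb{Q}$, hence also integrally), so the $[\cM_w]$ form a basis.

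For generation at the categorical level, I plan to argue by induction on a suitable well-founded invariant of $w$ (for instance the number $r(w)$ of cups in $C(w)$, refined by $k-k_w$) that each $\LB_w$ lies in the thick triangulated subcategory $\langle \cM_{w'}\rangle$ generated by all $\cM_{w'}$. The base case is handled by the identity $\cM_w\cong \LB_w$ when there are no nontrivial cups. For the inductive step, fix a minimal cup of $C(w)$ at $(i,i+1)$ and write $w$ as $v^+$ or $v^-$ in the notation of the proof of Proposition~\ref{multiplicities}; the short exact sequence \eqref{cup-exact} lifts to an exact triangle in $\mathcal{D}^b(Z_n)$ connecting $\LB_{v^-}$, $\LB_{v^+}$ and $\vp_i(\LB_v)\cong \cM_{v^+}$. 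Combined with the inductive hypothesis applied to $\LB_v$ on $Z_{n-2}$ and the fact that the triangulated functor $\vp_i$ carries $\cM_v$ to $\cM_{v^+}$, this places all three of $\LB_{v^\pm}$ and $\cM_{v^+}$ in $\langle\cM_{w''}\rangle$. Since $\{\LB_w\}$ generates $\mathcal{D}^b(Z_n)$, so does $\{\cM_w\}$.

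The main obstacle is choosing the well-founded invariant so that both $\LB_{v^+}$ and $\LB_{v^-}$ appearing in \eqref{cup-exact} are strictly smaller than $\LB_w$, allowing the inductive hypothesis to apply to one of them and producing the other by rotating the exact triangle. In particular, one has to verify that the functor $\vp_i:\mathcal{D}^b(Z_{n-2})\to\mathcal{D}^b(Z_n)$ carries the thick subcategory generated by the $\cM_{v'}$'s downstairs into that generated by the $\cM_{w''}$'s upstairs; this reduces to the identification $\vp_i(\cM_v)=\cM_{v^+}$ used repeatedly in the proof of Proposition~\ref{multiplicities}, together with the exactness of $\vp_i$.
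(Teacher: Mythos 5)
Your $K_0$ argument is correct and is precisely what the paper intends: the ``in particular'' clause of Proposition~\ref{multiplicities} identifies the sublattice spanned by the $[\cM_w]$ with the one spanned by the $[\LB_w]$, which is all of $K_0$, and a spanning set of the right cardinality in a free abelian group is a basis (alternatively, the change of basis in Proposition~\ref{multiplicities} is unitriangular for the dominance order, since every $w'\in\Theta_w$ with $w'\neq w$ is strictly larger than $w$).

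The categorical-generation half has the right skeleton but does not close as you have set it up. Two points. First, $\vp_i(\LB_v)$ is \emph{not} isomorphic to $\cM_{v^+}$ unless $C(v)$ is cup-free; the correct identity is $\vp_i(\cM_v)\cong\cM_{v^+}$, and what you actually need (and do state at the end) is only that $\vp_i$ carries the thick subcategory generated by the $\cM_{v'}$ on $Z_{n-2}$ into the one generated by the $\cM_{w''}$ on $Z_n$. Second, and more seriously, the inductive scheme you propose cannot be arranged: in \eqref{cup-exact} the weight $w$ \emph{is} $v^+$ (a cup at $(i,i+1)$ forces $\down$ at $i$ and $\up$ at $i+1$), so $\LB_{v^+}$ is the object you are trying to produce and cannot be ``strictly smaller than $\LB_w$''; moreover $v^-$ is strictly \emph{larger} than $w=v^+$ in the dominance order (flipping $\down\up$ to $\up\down$ increases the weight), and $r(v^-)$ need not be smaller than $r(w)$, since the inserted $\up$ at $i$ and $\down$ at $i+1$ can each create a new cup in $C(v^-)$. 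The repair is the one used in Lemma~\ref{comp-factors}: induct on the number of points and \emph{downward} on the dominance order. The maximal weights are exactly those of the form $\up\cdots\up\down\cdots\down$, for which $C(w)$ has no cups and $\cM_w=\LB_w$; for general $w$, the triangle \eqref{cup-exact} expresses $\LB_w=\LB_{v^+}$ in terms of $\vp_i(\LB_v)$ (handled by induction on $n$) and $\LB_{v^-}$ with $v^->w$ (handled by the downward induction). With that modification your argument coincides with the paper's.
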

By Remark~\ref{KL}, the transformation matrix between the two bases is given by Kazhdan-Lusztig polynomials.\\

Following ideas of Bezrukavnikov, we now define a $t$-structure on
$D^b(Z_n)$ (not the standard one) for which the $\cM_w$ form a
complete set of simple objects in the heart. This heart will then be
equivalent to the category of finite dimensional modules over our
convolution algebra $K_n$. The algebra $K_n$ is quasi-hereditary with
the standard modules given by the line bundles $\LB_w$'s.

First, we define the necessary ordering on the set of weights.  This is the standard ordering on weights which can be explicitly given in this case by saying that $w\leq v$ if for each $i$, there are more $\vee$'s in the last $i$ indices for $v$ than $w$. Alternatively, it's the partial ordering generated by the basic relation that changing $\up\down$ to $\down\up$ is getting smaller in the ordering.

\begin{lemma}
\label{semisimple}
The full additive category generated by the $\cM_w$'s is semisimple.
Let still be $n=2k$. Let $w$, $w'$ weights. Then $\operatorname{Hom}_{D^b(\Coh(Z_n)}(\cM_w, \cM_{w'})=\{0\}$ for $w\not=w'$ and $\operatorname{Hom}_{D^b(\Coh(Z_n)})(\cM_w, \cM_{w'})=\mC$ otherwise.
\end{lemma}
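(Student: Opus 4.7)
My plan is to apply adjunction for the closed immersions $j_w : Z_w \hookrightarrow Z_n$ in order to reduce the derived Hom computation to a global section computation on the scheme-theoretic intersection $Z_w \cap Z_{w'}$. Since each $\cM_w = j_{w*}\LB_w$ is a genuine sheaf concentrated in degree $0$ (not a true complex), the derived Hom coincides with the Hom in the abelian category of coherent sheaves. Any $\mathcal{O}_{Z_n}$-module map from $j_{w*}\LB_w$ to $j_{w'*}\LB_{w'}$ must be annihilated by the sum of defining ideals $I_{Z_w} + I_{Z_{w'}} = I_{Z_w \cap Z_{w'}}$, which yields
\begin{equation*}
\Hom_{D^b(\Coh(Z_n))}(\cM_w,\cM_{w'}) \;\cong\; H^0\bigl(Z_w \cap Z_{w'},\, \LB_w^{-1}\otimes\LB_{w'}\bigr).
\end{equation*}

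Next I would analyze $Z_w \cap Z_{w'}$ geometrically. Adapting the argument of Theorem~\ref{intersection-fiber} from the Springer fiber $\spf$ to the ambient $Z_n$, I expect the intersection to be an iterated $\CP^1$-bundle whose fiber structure is controlled by the circles and lines of the overlay $\overline{C(w)}\,C(w')$; in particular it is smooth and connected. Thus the case $w=w'$ is immediate: the twisting bundle is trivial and $H^0(Z_w, \mathcal{O}_{Z_w}) = \mC$.

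For $w \neq w'$, I induct on the stages of the $\CP^1$-bundle structure. On a $\CP^1$-fiber parameterising an intermediate subspace $F_i$ (with $F_{i-1}$ and $F_{i+1}$ fixed), the tautological quotients restrict as $\LB_i \cong \mathcal{O}(-1)$ and $\LB_{i+1} \cong \mathcal{O}(+1)$, while $\LB_j$ is trivial for all other $j$. Hence
\begin{equation*}
\LB_w^{-1}\otimes\LB_{w'}\Bigm|_{\CP^1_{(i)}} \cong \mathcal{O}(a_i), \qquad a_i = \varepsilon_{i+1} - \varepsilon_i,
\end{equation*}
where $\varepsilon_j = [j\in w'_\vee] - [j\in w_\vee] \in \{-1,0,1\}$. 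The Leray spectral sequence of the $\CP^1$-bundle then shows that as soon as $a_i < 0$ at one free stage, the global sections vanish, and the vanishing propagates upward through the iterated bundle via the projection formula.

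The main obstacle is this last combinatorial step: verifying that whenever $w \neq w'$ one can always locate a stage $i$ of the iterated $\CP^1$-bundle (that is, an index at which $F_i$ is genuinely free and not pinned down by the cup conditions of both $C(w)$ and $C(w')$) such that $a_i < 0$. Since $\sum_j \varepsilon_j = 0$ but not all $\varepsilon_j$ vanish, the sequence $(\varepsilon_j)$ must go down somewhere; the subtle point is to ensure a descent occurs at an index corresponding to a free stage rather than at a pinned one. This requires a case analysis on the overlay $\overline{C(w)}\,C(w')$, distinguishing behaviour on closed circles (where $w$ and $w'$ may genuinely disagree, producing free $\CP^1$ stages) from line components (where the orientations are forced to coincide along the entire line and contribute no mismatch).
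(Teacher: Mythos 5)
There is a genuine error at the very first step, before any of the geometry begins. The identity
\begin{equation*}
\Hom_{D^b(\Coh(Z_n))}(\cM_w,\cM_{w'})\;\cong\;H^0\bigl(Z_w\cap Z_{w'},\,\LB_w^{-1}\otimes\LB_{w'}\bigr)
\end{equation*}
is false: adjunction for the closed immersion $j_{w'}$ gives $\Hom(A,j_{w'*}B)=\Hom(j_{w'}^*A,B)$, so it handles a pushforward in the \emph{second} slot, whereas a pushforward in the \emph{first} slot requires $j^!$, not $j^*$. Concretely, any map $j_{w*}\LB_w\to j_{w'*}\LB_{w'}$ has image a subsheaf of the line bundle $\LB_{w'}$ on the integral variety $Z_{w'}$ supported on $Z_w\cap Z_{w'}$; hence the Hom vanishes outright whenever $Z_w\cap Z_{w'}\subsetneq Z_{w'}$, and equals $H^0(Z_{w'},\LB_w^{-1}\otimes\LB_{w'})$ when $Z_{w'}\subseteq Z_w$ --- not what your formula predicts. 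Already $n=2$ refutes it: for $w=\vee\wedge$, $w'=\wedge\vee$ one has $Z_{w'}=Z_2$ and $Z_w\cong\CP^1$ with $\LB_1|_{Z_w}\cong\cO(-1)$, $\LB_2|_{Z_w}\cong\cO(1)$, so your right-hand side is $H^0(\CP^1,\cO(2))=\mC^3$, while the true $\Hom(j_{w*}\LB_1,\LB_2)$ is zero (torsion source, torsion-free target). Even after repairing this, the remaining steps are not yet proofs: the smooth iterated-$\CP^1$ structure of $Z_w\cap Z_{w'}$ is asserted by analogy with Theorem~\ref{intersection-fiber}, which lives inside the Springer fiber and uses the fixed-point pinning at orphaned points that is absent for the $Z_w$; the fiberwise degree is not $\varepsilon_{i+1}-\varepsilon_i$, because on the single free $\CP^1$ attached to a circle of the overlay \emph{every} $\LB_j$ with $j$ on that circle restricts to $\cO(\pm1)$ with alternating signs, so the degree is an alternating sum of the $\varepsilon_j$ over the whole circle; and the combinatorial verification you yourself flag as ``the main obstacle'' is precisely the content of the lemma and is left open.

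For comparison, the paper's proof avoids intersections entirely: it realizes $\cM_w$ as the image of a line bundle under the Cautis--Kamnitzer cup functors and computes $\Ext^\bullet(\cM_w,\cM_{w'})$ by repeatedly invoking the cup/cap adjunctions together with the explicit action of the cap functors on the $\LB_v$ (sending them to $0$, $\LB_u$, or $\LB_u[1]$ according to the labels at $i,i+1$), inducting on the number of cups and on the partial order; this yields the stronger statement that the minimal nonzero $\Ext$-degree is $d(w,w')$, from which the lemma follows.
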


\begin{proof}
Claim: let $d(w,w')$ be as in \eqref{dshift} then either $\Ext^i_{\Coh(Z_n)}(\LB_w,\LB_w)$ is trivial or its minimal nonzero degree is $i=d(w,w')$, and so the lemma follows directly. Note that in case $w$, $w'$ correspond to standard tableaux, then the claim is clear by Theorem~\ref{halfdensities}. It of course also holds for $n=2$.

Assume first $w$ is minimal in the partial order $\leq$ and $w'$ is arbitrary. If $w=w'$, then $\Ext^i_{\Coh(Z_n)}(\cM_w,\cM_w)\cong\Ext^i_{\Coh(Z_n)}(\LB_w,\LB_w)\cong H^i(\LB_w^*\otimes \LB_w)\cong H^i(\mathcal{O}_{Z_n})=\mC$ and the statement follows.
If $w\not=w'$ then $C(w')$ has at least one minimal cup connecting say $i$ and $i+1$. Using the adjunctions \cite[Lemma 4.4]{CK} for cup and cap functors in we can remove this cup in expense of applying a cap functor $F_i[1]$ to $\LB_w$. Let $a$, $b$ be the $i$-th and $i+1$-st labels of $w$ and denote by $v$ the weight which is obtained from $w$ by removing these these two points.
Then by \cite[6.3]{CK} we have the following four cases: $F_i\LB_w=0$ if $ab=\down\down$ or  $ab=\up\up$, and then of course $\Ext^i_{\Coh(Z_n)}(\LB_w,\LB_w)=\{0\}$. We have $F_i\LB_w\cong \LB_{v}[1]$ if $ab=\up\down$, in which case the claim follows by induction (note that we removed a clockwise cup/cap). We have $F_i\LB_w\cong \LB_{v}$ if $ab=\down\up$, in which case the claim follows by induction noting that we removed a counter-clockwise cup/cap. Hence the statement is true for minimal $w$.
Assume $w$ is not minimal. Choose a minimal cup in $C(w)$ say at the vertices $i, i+1$. Applying again adjunction properties, we can remove this cup by the expense of a cap functor $F_i[-1]$. If this cap creates a circle, we have $\Ext^\bullet_{\Coh(Z_n)}(\LB_w,\LB_{w'})\cong\Ext^{\bullet}_{\Coh(Z_n)}(\LB_v,\LB_{v'})\oplus \Ext^{\bullet+2}_{\Coh(Z_n)}(\LB_v,LB_{v'})$ by \cite[Corollary 5.10]{CK}. Since $d(w,w')=d(v,v')$, the statement follows.
If this cap does not create a circle, and $\Ext^\bullet_{\Coh(Z_n)}(\LB_w,\LB_w')\not=\{0\}$, then using again \cite[Corollary 5.10]{CK} and adjointness properties we get
\begin{eqnarray*}
&&\Ext^{\bullet+1}_{\Coh(Z_n)}(\LB_w,\LB_w')\oplus\Ext^{\bullet-1}_{\Coh(Z_n)}(\LB_w,\LB_w')\\
&\cong&\Ext^\bullet_{\Coh(Z_n)}(G_iF_i\LB_w,\LB_w')\\
&\cong&\Ext^\bullet_{\Coh(Z_n)}(G_iF_i\LB_x,\LB_w')\\
&\cong&\Ext^\bullet_{\Coh(Z_n)}(\LB_x,F_iG_i\LB_w')\\
&\cong&\Ext^{\bullet+1}_{\Coh(Z_n)}(\LB_w,\LB_z)\oplus\Ext^{\bullet-1}_{\Coh(Z_n)}(\LB_w,\LB_z),
\end{eqnarray*}
where $z$ is obtained from $w'$, and $x$ is obtained from $w$, by swapping the labels at the vertices $i$ and $i+1$. In particular,
$$\Ext^{\bullet}_{\Coh(Z_n)}(\LB_x,\LB_w')\cong\Ext^{\bullet}_{\Coh(Z_n)}(\LB_w,\LB_z).$$ On the other hand $d(x,{w'})=d(w,z)$. (To see this assume first vertex $l$ and $k$ are connected to the vertices $i$ and $i+1$ via a cup diagram in $C(w')$).
\end{proof}

The following is now a direct consequence of \cite[Lemma 3]{Bezt}:

\begin{theorem}
There exists a unique $t$-structure of $D^b(\Coh(Z_n))$, such that the $\cM_w$'s form the simple objects.
\end{theorem}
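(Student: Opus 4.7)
My plan is to invoke \cite[Lemma~3]{Bezt}: given a triangulated category $\mathcal{D}$ and a finite collection $\{S_w\}$ of objects which (a) classically generate $\mathcal{D}$, (b) satisfy $\End(S_w)=\mC$ and $\Hom(S_w,S_{w'})=0$ for $w\neq w'$, and (c) satisfy the Ext-negativity $\Hom(S_w,S_{w'}[i])=0$ for $i<0$, there is a unique bounded $t$-structure on $\mathcal{D}$ whose simple objects are precisely the $S_w$. All three conditions need to be checked for our candidates $\cM_w$.

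For generation, the Cellular Fibration Lemma together with \cite[Theorem~6.2]{CK} shows that the $\LB_w$'s generate $D^b(\Coh(Z_n))$ and that their classes form a $\mZ$-basis of $K_0$. By Proposition~\ref{multiplicities}, the change-of-basis matrix from $\{[\LB_w]\}$ to $\{[\cM_w]\}$ is unitriangular with respect to the partial order $\le$, so the $\cM_w$ themselves form a basis of $K_0$ and classically generate $D^b(\Coh(Z_n))$. The Hom and endomorphism conditions (b) are exactly the content of Lemma~\ref{semisimple}.

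The remaining step, which I expect to be the main obstacle, is verifying the Ext-negativity (c). The strategy would be to reduce to the corresponding statement for the $\LB_w$'s analyzed in the proof of Lemma~\ref{semisimple}, where the minimal degree in which $\Ext^i(\LB_w,\LB_{w'})$ is nonzero equals $d(w,w')\geq 0$. Combining this input with the short exact sequences \eqref{cup-exact} expressing $\vp_i(\LB_v)$ as the cone of a map $\LB_{v^-}\to\LB_{v^+}$, one would induct on the number of cups in $C(w)$ to deduce $\Hom(\cM_w,\cM_{w'}[i])=0$ for $i<0$. With this in hand, \cite[Lemma~3]{Bezt} applies verbatim and delivers the unique $t$-structure whose simple objects are the $\cM_w$ by construction.
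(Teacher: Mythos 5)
Your proposal follows the same route as the paper: both reduce the statement to \cite[Lemma 3]{Bezt}, use Lemma~\ref{semisimple} for the Hom-conditions, and use Proposition~\ref{multiplicities} together with the generation of $D^b(\Coh(Z_n))$ by the $\LB_w$'s for the generation condition. The one place where you diverge is the negative-Ext vanishing $\Hom_{D^b(\Coh(Z_n))}(\cM_w,\cM_{w'}[i])=0$ for $i<0$, which you flag as ``the main obstacle'' and propose to attack by an induction on the number of cups via the exact sequences \eqref{cup-exact}. This is where you are overcomplicating matters, and as written your proposal leaves that step as an unexecuted sketch. The point is that each $\cM_w=j_{w*}\LB_w$ is an honest coherent sheaf, i.e.\ an object of the heart of the \emph{standard} $t$-structure concentrated in cohomological degree zero; for any two such objects one has $\Hom_{D^b}(\cM_w,\cM_{w'}[i])=\Ext^i(\cM_w,\cM_{w'})=0$ for all $i<0$ automatically. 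So condition (c) is free and requires no induction. With that observation substituted for your sketched reduction, your argument is complete and coincides with the paper's.
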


\begin{proof}
We only have to verify the assumptions of \cite[Lemma 3]{Bezt}. These are however just Lemma~\ref{semisimple} together with the observation that the $\cM_w$'s are sheaves (so that $\operatorname{Hom}_{D^b(\Coh(Z_n))}(\cM_w, \cM_{w'}[l])=\{0\}$ for any positive $l$).
\end{proof}

Following Bezrukavnikov, we call this the {\bf exotic} $t$-structure.  We call the heart of this $t$-structure the category of exotic sheaves $\Ex_n$. The main result of this section is the following:

\begin{theorem}
\label{highestweight}
 There is a highest weight structure on $\Ex_n$ such the the sheaves $\LB_w$ are standard.
\end{theorem}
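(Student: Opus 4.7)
The strategy is to construct, for each weight sequence $w$, an explicit filtration of $\LB_w$ in $\Ex_n$ whose successive quotients are simples $\cM_{w'}$, with $\cM_w$ occurring exactly once at one extreme and all other $w'$ comparable to $w$ in the partial order on the other side. Once such a filtration is in hand, the highest-weight axioms follow from the resulting triangularity together with the $\Hom$-calculations already established in Lemma~\ref{semisimple}. The filtration is built by double induction: the outer induction is on $n$, and the inner induction is on the partial order on weight sequences of fixed length, ordered downward from the maximum $\wedge^{n-k}\vee^{k}$.

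The inner base case $r(w)=0$ is the first bullet before Proposition~\ref{multiplicities}: there $\LB_w=\cM_w$ and there is nothing to show. For the inductive step, one locates an innermost (and hence adjacent) cup of $C(w)$ at positions $(i,i+1)$, writes $w=v^{+}$ in the notation preceding (\ref{cup-exact}), and considers the short exact sequence
\begin{equation*}
0\to \LB_{v^{-}}\to \LB_{v^{+}}\to \vp_i(\LB_v)\to 0.
\end{equation*}
Since $v^{-}>v^{+}=w$ strictly in the poset (the pattern $\vee\wedge$ at $(i,i{+}1)$ is replaced by $\wedge\vee$), the inner induction yields an $\cM$-filtration of $\LB_{v^{-}}$ with subquotients $\cM_u$ satisfying $u\ge v^{-}>w$. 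The outer induction applied to $\LB_v$ (which has length $n-2$) produces an $\cM$-filtration there; since $\vp_i$ is an exact pull–tensor–push along smooth morphisms (see \cite[4.2.1]{CK}), it sends this filtration to an $\cM$-filtration of $\vp_i(\LB_v)$ whose top is $\vp_i(\cM_v)=\cM_{v^{+}}=\cM_w$ and whose remaining subquotients are again of the form $\cM_u$ with $u>w$, as controlled by Proposition~\ref{multiplicities}. Splicing these pieces together yields the desired filtration of $\LB_w$, and the total multiplicities match those obtained by inverting the expansion of $[\cM_w]$ in Proposition~\ref{multiplicities}.

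With the filtration at our disposal, the highest-weight axioms are checked as follows. The filtration combined with Lemma~\ref{semisimple} gives $\End_{\Ex_n}(\LB_w)=\C$ and $\Hom_{\Ex_n}(\LB_w,\cM_v)=\C\cdot\delta_{w,v}$, so $\LB_w$ has simple head $\cM_w$ with all other composition factors strictly greater than $w$. Indecomposable projective covers $P(w)\in\Ex_n$ carrying $\LB$-filtrations are then produced by taking iterated universal extensions of the $\LB_{w'}$ against the $\Ext^1$-groups between standards; the triangularity together with the finiteness of these Ext-groups (coming from the finite-dimensionality of all Hom-spaces in $D^b(\Ex_n)$) ensures termination. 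The resulting multiplicities $[P(w):\LB_{w'}]$ agree with the Kazhdan–Lusztig polynomials of Remark~\ref{KL}, identifying this highest-weight structure combinatorially with that of the KS-algebra $\ST^{n-k,k}$.

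The main technical obstacle is the clash between the two $t$-structures on $D^b(\Coh(Z_n))$: the sequence (\ref{cup-exact}) is exact in the standard heart $\Coh(Z_n)$, whereas our filtration must live in the exotic heart $\Ex_n$. One must therefore first verify that each $\LB_w$ lies in $\Ex_n$, after which exactness in $\Ex_n$ becomes automatic. This placement follows by combining the $K_0$-expansion of $[\LB_w]$ in the $[\cM_{w'}]$'s from Proposition~\ref{multiplicities} with the fact that each $\cM_w$ is a genuine sheaf, so $\Hom_{D^b}(\cM_w,\cM_v[l])=0$ for $l<0$; a dyadic induction on $\Theta_w$ using the vanishing $\Hom$-groups then pins $\LB_w$ to exotic degree zero. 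Verifying this placement carefully, and in particular checking that $\vp_i$ preserves the exotic heart along the inductive step, is the delicate point that the proof must address.
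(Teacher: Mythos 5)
Your first half --- the double induction producing a filtration of $\LB_w$ in the exotic heart with subquotients $\cM_{w'}$ and $\cM_w$ occurring once, built from the short exact sequence (\ref{cup-exact}) and the recursion $\LB_w=\LB_{v^+}$ --- is exactly the paper's Lemma~\ref{comp-factors}, and your worry about the clash between the standard and exotic $t$-structures is legitimate (the paper handles it inside the same induction). But the second half has a genuine gap: you never establish the directional $\Ext$-vanishing between the line bundles themselves, namely $\Ext^{\bullet}_{\Coh(Z_n)}(\LB_w,\LB_v)=0$ for $v\ngeq w$. This is the paper's Lemma~\ref{exsequence} (the statement that the $\LB_v$ form an exceptional sequence), and it is proved by a separate geometric computation --- pushing forward $\LB_w^*\otimes\LB_v$ along the forgetful maps $\pi:Z_n\to Z_{n-1}$ and tracking when the resulting extensions of line bundles can have nonzero cohomology. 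It does not follow from anything you have in hand: Lemma~\ref{semisimple} only controls degree-zero $\Hom$ between the simples $\cM_w$, and knowing the composition factors of each $\LB_w$ in the heart says nothing about $\Ext^{>0}$ between two filtered objects, since a priori $\Ext^1(\cM_u,\cM_{u'})$ could be nonzero in both directions for incomparable $u,u'$.

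This missing input is precisely what your construction of projectives needs. The ``triangularity'' you invoke to guarantee that the iterated universal extensions terminate is triangularity of composition multiplicities, not of $\Ext^1$-groups between standards; without the one-directional vanishing, the universal-extension process need not stabilize and the objects it produces need not be projective, so the highest-weight axioms are not verified. The paper's route is to combine Lemma~\ref{comp-factors} with Lemma~\ref{exsequence} to conclude that the exotic $t$-structure is the $t$-structure of the exceptional sequence $\{\LB_v\}$ in Bezrukavnikov's sense, and then quote \cite[Proposition 2]{Bezt} for the highest-weight conclusion; your hand-rolled universal-extension argument is a reasonable substitute for that citation, but only once the exceptional-sequence property is actually proved. (A small secondary remark: with the partial order as literally defined in the paper one has $v^->v^+=w$, so your direction of the inequalities on the composition factors is internally consistent; just make sure the convention you use for ``standard object'' matches the direction in which the $\Ext$-vanishing of Lemma~\ref{exsequence} holds.)
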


\begin{lemma}\label{comp-factors}
 The sheaf $\LB_w$ is exotic, and its composition factors are all of the form $\cM_{w'}$ for $w'\leq w$, with $\cM_{w}$ appearing exactly once.
\end{lemma}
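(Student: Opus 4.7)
The plan is to prove both parts of the lemma simultaneously by induction on $k = n/2$, the number of cups in $C(w)$. The base case $k = 0$ is immediate since $Z_0$ is a point and $\LB_w = \cM_w = \mC$.

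For the inductive step, I would fix a minimal cup of $C(w)$ with endpoints $(i,i+1)$, let $v$ be the induced weight sequence on the remaining $n-2$ positions, and write $v^\pm$ for the weights obtained by inserting the cup at $(i,i+1)$ with orientations $\vee\wedge$ or $\wedge\vee$ respectively, as in the proof of Proposition~\ref{multiplicities}. The main tool is the short exact sequence
\begin{equation*}
0 \to \LB_{v^-} \to \LB_{v^+} \to \vp_i(\LB_v) \to 0
\end{equation*}
recalled from that proof, together with the fact that $w$ coincides with one of $v^\pm$. The central preparatory step is to show that $\vp_i$ restricts to a $t$-exact functor $\Ex_{n-2} \to \Ex_n$, which I would deduce from the same exact sequence applied to arbitrary weights in $\Ex_{n-2}$, combined with the inductive hypothesis and the $K_0$-identity of Proposition~\ref{multiplicities} (to rule out higher cohomology in the pushforward).

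Granting $t$-exactness and the inductive hypothesis for $\LB_v$, the term $\vp_i(\LB_v)$ lies in $\Ex_n$ with composition factors $\cM_{(v')^{\pm}}$ for $v' \leq v$. A direct combinatorial check confirms that cup-insertion is compatible with the Bruhat order, so every such $(v')^{\pm}$ satisfies $(v')^{\pm} \leq w$. The remaining term (whichever of $\LB_{v^\pm}$ is not $\LB_w$) is handled by a secondary induction on the number $m$ of cups in $C(w)$ whose orientation disagrees with that of the standard tableau representative of $\Theta_w$. The secondary base case $m=0$ is $w$ equal to the minimum of $\Theta_w$, where $\LB_w = \cM_w$ by Theorem~\ref{funcCK} and the isomorphism $\hd(\com S) \cong \bigotimes_{i \in \Sm S} \LB_i$. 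Splicing the extension and using this bookkeeping identifies $\LB_w$ as exotic with all composition factors $\cM_{w'}$ for $w' \leq w$; the multiplicity of $\cM_w$ is then forced to equal one by inverting the triangular change-of-basis formula of Proposition~\ref{multiplicities}, which determines $[\LB_w]$ uniquely in terms of the $[\cM_{w'}]$ with $w' \in \Theta_w$.

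The main obstacle is the $t$-exactness of $\vp_i$ with respect to the exotic $t$-structures, and the precise identification of the composition factors of $\vp_i(\cM_{v'})$ for an arbitrary simple $\cM_{v'}$ in $\Ex_{n-2}$. A secondary obstacle is the combinatorial verification that cup-insertion preserves the Bruhat order on weight sequences; this is routine but requires some case analysis depending on which of $v^\pm$ equals $w$.
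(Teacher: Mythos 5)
Your skeleton is the paper's: the same double induction, the same short exact sequence \eqref{cup-exact}, and the same decomposition of $\LB_w=\LB_{v^+}$ as an extension of $\vp_i(\LB_v)$ by $\LB_{v^-}$, with the quotient handled by induction on the number of points and the sub by induction on the order. Several of the auxiliary steps need adjusting, though none is fatal. The step you flag as the main obstacle does not require the $K_0$-identity or any vanishing of higher direct images: $\vp_i={\bf i}_*(\LB_i\otimes{\bf q}^*(-))$ is exact already on the abelian categories of coherent sheaves (flat pullback along a $\CP^1$-fibration, twist by a line bundle, pushforward along a closed immersion), and it carries the exotic simple $\cM_{v'}$ to the single exotic simple indexed by $v'$ with the cup $(i,i+1)$ inserted and oriented $\vee\wedge$ --- this is essentially the definition of $\cM$ as an iterated image of cup functors. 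In particular $\vp_i(\cM_{v'})$ is one simple, not a pair $\cM_{(v')^{\pm}}$ as you write, and applying $\vp_i$ to a composition series of $\LB_v$ yields a filtration of $\vp_i(\LB_v)$ by exotic simples, which is all the paper uses (silently). Next, there is no case distinction over which of $v^{\pm}$ equals $w$: every cup of $C(w)$ is oriented $\vee\wedge$ by $w$ by construction of the cup diagram, so $w=v^+$ always and the leftover term is always the subsheaf $\LB_{v^-}$, which is strictly comparable to $w$ and hence covered by the induction on the partial order; your secondary induction on wrongly-oriented cups is an unnecessary reparametrization of this. Your secondary base case is also misidentified: $\LB_w=\cM_w$ holds precisely when $C(w)$ has \emph{no} cups (so $\Theta_w=\{w\}$), whereas Theorem~\ref{funcCK} describes the opposite extreme, where $\cM_w$ is a half-density supported on a component of $Y$ while $\LB_w$ is a line bundle on all of $Z_n$. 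Finally, reading off the multiplicity of $\cM_w$ from the unitriangularity of Proposition~\ref{multiplicities} is a legitimate alternative to the paper's direct bookkeeping, which simply observes that $\cM_w=\vp_i(\cM_v)$ occurs once because $\cM_v$ occurs once in $\LB_v$.
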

\begin{proof}
 We induct on both the number of points, and the ordering given above.  Our base case is still that where $C(w)$ is empty, where this is obvious.

 As we noted before, we can write $w$ as $v^+$ for some sequence $v$
 on fewer points.  Recall the exact sequence
 (\ref{cup-exact}).  Now, by induction on the number of points
 $\vp_i(\LB_v)$ is exotic, and has the desired composition series
 (since $\cM_v$ appears once in $\LB_v$, we have
 $\cM_{w}=\vp_i(\cM_{v})$ appearing once), and by induction on the
 partial order, $\LB_{v^-}$ is exotic, and all its composition factors
 are strictly smaller than $w$.
\end{proof}

\begin{lemma}
\label{exsequence}
 The line bundles $\LB_v$ form an exceptional sequence, that is, we have $$\Ext^\bullet_{\Coh(Z_n)}(\LB_w,\LB_v)=0,$$ for all $v\ngeq w$.
\end{lemma}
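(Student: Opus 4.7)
The plan is to argue by a double induction: an outer induction on $n$, and, for fixed $n$, an inner induction along the partial order moving upward from the minimum weight. The outer base case ($n\leq 2$) will be handled by direct cohomology computations on $Z_n$.

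For the inductive step, suppose $v\ngeq w$. If $w$ is the unique minimum of the partial order, then $v\ngeq w$ cannot hold and the claim is vacuous. Otherwise, assume first that $w\neq w_{\mathrm{dom}}$, so that $w$ has an adjacent $\vee\wedge$ at some positions $i,i+1$. In the notation of the proof of Proposition~\ref{multiplicities} write $w=u^+$ and set $w'=u^-$, so that $w'>w$. Applying $\Hom(-,\LB_v)$ to the short exact sequence \eqref{cup-exact}
\[
0\to \LB_{w'}\to \LB_w\to \vp_i(\LB_u)\to 0
\]
yields a long exact sequence. The term $\Ext^\bullet(\LB_{w'},\LB_v)$ vanishes by the inner inductive hypothesis: since $w'>w$, any $v\geq w'$ would force $v\geq w$, contradicting $v\ngeq w$. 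For $\Ext^\bullet(\vp_i(\LB_u),\LB_v)$, the adjunction between $\vp_i$ and the cap functor $F_i$ from \cite[Lemma 4.4]{CK} rewrites it as $\Ext^\bullet(\LB_u,F_i(\LB_v)[s])$ inside $D^b(Z_{n-2})$ for a suitable shift $s$. The cap-functor formulas of \cite[\S 6]{CK} give $F_i(\LB_v)=0$ when the labels of $v$ at $i,i+1$ are $\vee\vee$ or $\wedge\wedge$, and $F_i(\LB_v)\cong\LB_{v^\flat}$ up to shift in the other two cases (where $v^\flat$ is $v$ with positions $i,i+1$ removed). In each of the two nontrivial subcases, a direct bookkeeping on the tail counts $N_j(x):=\#\{\,l\geq j:x_l=\vee\,\}$ shows that $v\ngeq u^+$ forces $v^\flat\ngeq u$, so the outer inductive hypothesis applied to $Z_{n-2}$ finishes this term.

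The principal obstacle will be the remaining case $w=w_{\mathrm{dom}}$: here no adjacent $\vee\wedge$ pair exists, so the reduction above is unavailable. The natural attempt---picking an innermost $\wedge\vee$ in $w_{\mathrm{dom}}$, writing $w_{\mathrm{dom}}=u^-$ and using the same sequence as $0\to\LB_{w_{\mathrm{dom}}}\to\LB_{u^+}\to\vp_k(\LB_u)\to 0$---introduces $\LB_{u^+}$ with $u^+<w_{\mathrm{dom}}$ into the long exact sequence, a position not covered by the upward inner induction. I would try to close this gap in one of three ways: (i) run a parallel downward induction and reconcile the two ranges of weights at a common layer; (ii) invoke Serre duality on $Z_n$ combined with an explicit description of $\omega_{Z_n}$ in terms of the tautological line bundles $\LB_i$, dualising $\LB_{w_{\mathrm{dom}}}$ to a line bundle already controlled by the main inductive argument; or (iii) compute $H^\bullet(Z_n,\LB_{w_{\mathrm{dom}}}^{-1}\otimes\LB_v)$ directly via the Leray spectral sequence for the iterated $\mathbb{P}^1$-bundle structure of $Z_n$, exploiting that the fibrewise degrees of $\LB_{w_{\mathrm{dom}}}^{-1}\otimes\LB_v$ are non-positive for $v<w_{\mathrm{dom}}$.
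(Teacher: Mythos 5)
Your inductive step for $w\neq w_{\mathrm{dom}}$ is essentially sound (the treatment of the quotient term $\vp_i(\LB_u)$ by adjunction with the cap functor mirrors what the paper does in the proof of Lemma~\ref{semisimple}), but the proof as written does not close. First, the direction of the inner induction is inconsistent: you declare it to move \emph{upward} from the minimum, yet the step uses the hypothesis at $w'=u^->w$, so the induction must run \emph{downward} from the maximum $w_{\mathrm{dom}}$. Once the direction is corrected, the base case is exactly $w=w_{\mathrm{dom}}$, and this is not vacuous: since $w_{\mathrm{dom}}$ is the unique maximum, the claim there is $\Ext^\bullet(\LB_{w_{\mathrm{dom}}},\LB_v)=0$ for \emph{every} $v\neq w_{\mathrm{dom}}$, a substantive vanishing statement on which the entire downward induction rests. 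You explicitly leave this case open, offering three possible strategies without carrying any of them out, so the argument has a genuine gap rather than a removable technicality.

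It is worth noting that your option (iii) is, in effect, what the paper does --- but for arbitrary $w$, not just $w_{\mathrm{dom}}$, which makes the induction on the partial order unnecessary altogether. The paper writes $\Ext^i(\LB_w,\LB_v)\cong H^i(Z_n,\LB_w^*\otimes\LB_v)$ and computes this by iterated push-forward along the forgetful $\CP^1$-bundle maps $\pi\colon Z_n\to Z_{n-1}$, using that $\pi_*$ of a line bundle of fiber degree corresponding to $j=1$ vanishes; bookkeeping the possible fiber degrees at each stage in terms of the running difference of $\down$-counts in the tails of $w$ and $v$ shows that a non-zero answer forces $v\geq w$. If you want to salvage your structure, the cleanest route is to abandon the induction on the order and run that direct computation; alternatively, you must actually prove the $w=w_{\mathrm{dom}}$ vanishing (e.g.\ by such a push-forward computation for $\LB_{w_{\mathrm{dom}}}^*\otimes\LB_v$), at which point you will have done most of the work of the direct argument anyway.
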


\begin{proof}
 As usual, we have $\Ext^i(\LB_w,\LB_v)\cong H^i(\LB_w^*\otimes \LB_v)$.  Thus our problem reduces to computing the cohomology of certain line bundles.

Consider the map $\pi:Z_n\to Z_{n-1}$ given by forgetting the top space.  We note that if $\boldsymbol{\ep}=(\ep_1,\ldots,\ep_{n-1})$ is a vector valued in $\{1,0,-1\}$, then
\begin{equation*}
  \pi_*(\LB_{\boldsymbol{\ep}}\otimes\LB_n^{j})\cong
  \begin{cases}
   \LB_{\boldsymbol{\ep}}\otimes\Sym^{-j}(W) & j\leq 0\\
    0 & j=1\\
    \LB_{\boldsymbol{\ep}}\otimes\Sym^{j-2}(W) [-1]& j\geq 2
  \end{cases}
\end{equation*}
where $W\cong\pi_*\LB_n$ is a rank 2 vector bundle which is an extension
\begin{equation*}
0\to\LB_{n-1}^{-1}\to W\to \LB_{n-1}\to 0.
\end{equation*}
Thus, if a vector bundle is an extension of line bundles of the form $\LB_{\boldsymbol{\ep}}\otimes\LB_n^j[m]$, for $|j-1|\leq k$, then its push-forward is an extension of ones of the form $\LB_1^{\ep_1}\otimes\cdots\otimes \LB_{n-2}^{\ep_{n-2}}\otimes \LB_{n-1}^{j'}[m']$ where $|j'-1|\leq k+\ep_{n-1}$.

Applying this inductively, we see that the $\ell-n$-fold push-forward
$\pi_*^{\ell,n}\LB_w^*\otimes \LB_v$ is an extension of line bundles
of the form $\LB_{\boldsymbol{\ep}}\otimes\LB_n^j[m]$ where $|j-1|\leq
g_n+1$ where $g_n$ is the difference between the number of $\vee$'s in
the last $\ell-n$ places of $w$ and those in those places in $v$.  If
this number is ever negative, then $j=-1$, so the $\ell-n+1$-fold
push-forward is trivial.  Thus, if this push-forward is non-trivial, we must have this number always non-negative, that is, we must have $v\geq w$.
\end{proof}

\begin{proof}[Proof of Theorem~\ref{highestweight}]
Lemmata~\ref{comp-factors}~and~\ref{exsequence} show the line bundles $\LB_w$'s are standard covers of the simple modules $\cM_w$.  This shows that an object has negative $\Ext$ vanishing with all $\LB_w$ if and only if it does with $\cM_w$ (since $\Ext^i(\cM_w,X)=\Ext^i(\LB_w,X)$ for $i<0$ if $\Ext^i(\cM_v,X)=0$ for all $i<0$ and $v< w$), and the Serre subcategory generated by $\{\LB_v[i]\}_{i\geq 0}$ is the same as that generated by $\{\cM_{v}[i]\}_{i\geq 0}$.  That is, the exotic $t$-structure is exactly the one which Bezrukavnikov calls the $t$-structure of the exceptional sequence $\{\LB_v\}$. By \cite[Proposition 2]{Bezt}, the heart of this $t$-structure is highest weight, with $\{\LB_w\}$ as its standards.
\end{proof}

\providecommand{\bysame}{\leavevmode\hbox to3em{\hrulefill}\thinspace}
\providecommand{\MR}{\relax\ifhmode\unskip\space\fi MR }
% \MRhref is called by the amsart/book/proc definition of \MR.
\providecommand{\MRhref}[2]{%
  \href{http://www.ams.org/mathscinet-getitem?mr=#1}{#2}
}
\providecommand{\href}[2]{#2}

\end{document}